\let\frak\mathfrak
\def\>{\relax\ifmmode\mskip.666667\thinmuskip\relax\else\kern.111111em\fi}
\def\<{\relax\ifmmode\mskip-.333333\thinmuskip\relax\else\kern-.0555556em\fi}
\def\vsk#1>{\vskip#1\baselineskip}
\def\vv#1>{\vadjust{\vsk#1>}\ignorespaces}
\def\vvn#1>{\vadjust{\nobreak\vsk#1>\nobreak}\ignorespaces}
  \let\ssize\scriptstyle
\let\sssize\scriptscriptstyle
\let\Medskip\medskip
\def\medskip{\par\Medskip}
\let\Bigskip\bigskip
\def\bigskip{\par\Bigskip}
\let\Maketitle\maketitle
\def\maketitle{\Maketitle\thispagestyle{empty}\let\maketitle\empty}
\newtheorem{thm}{Theorem}[section]
\newtheorem{lem}[thm]{Lemma}
\newtheorem{prop}[thm]{Proposition}
\theoremstyle{definition}                                  
\numberwithin{equation}{section}
\theoremstyle{definition}
\newtheorem*{rem}{Remark}
\newtheorem*{example}{Example}
\let\mc\mathcal
\let\nc\newcommand
\let\al\alpha
\let\ka\kappa
\let\la\lambda
\let\phi\varphi
\let\si\sigma
\let\Om\Omega
\let\der\partial
\let\ox\otimes
\let\geq\geqslant
\let\leq\leqslant
\let\on\operatorname
\let\bi\bibitem
\let\bs\boldsymbol
\def\C{{\mathbb C}}
\def\Z{{\mathbb Z}}
\def\R{{\mathbb R}}
\def\F{{\mathbb F}}   
\def\+#1{^{\{#1\}}}
\def\beq{\begin{equation}}
\def\eeq{\end{equation}}
\def\be{\begin{equation*}}
\def\ee{\end{equation*}}
\nc{\bea}{\begin{eqnarray*}}
\nc{\eea}{\end{eqnarray*}}
\nc{\bean}{\begin{eqnarray}}
\nc{\eean}{\end{eqnarray}}
\let\ga\gamma
\let\Ga\Gamma
\nc{\Il}{{\mc I_{\bs\la}}}
\nc{\bla}{{\bs\la}}
\nc{\Fla}{\F_\bla}
\nc{\tfl}{{T^*\Fla}}
\nc{\GL}{{GL_n(\C)}}
\nc{\GLC}{{GL_n(\C)\times\C^*}}
\let\sd s 
\def\ddk_#1{\kk_{#1}\<\>\frac\der{\der\<\>\kk_{#1}}}
\def\bul{\mathbin{\raise.2ex\hbox{$\sssize\bullet$}}}
\def\intt{\mathchoice
{\mathop{\raise.2ex\rlap{$\,\,\ssize\backslash$}{\intop}}\nolimits}
{\mathop{\raise.3ex\rlap{$\,\sssize\backslash$}{\intop}}\nolimits}
{\mathop{\raise.1ex\rlap{$\sssize\>\backslash$}{\intop}}\nolimits}
{\mathop{\rlap{$\sssize\<\>\backslash$}{\intop}}\nolimits}}
\let\kk q 
\let\cc c
\let\Ko K
\def\GZ/{Gelfand-Zetlin}
\def\KZ/{{\slshape KZ\/}}
\def\qKZ/{{\slshape qKZ\/}}
\def\XXX/{{\slshape XXX\/}}
\nc{\A}{{\mc C}}
\def\Sing{{\on{Sing}}}
\def\slt{{\frak{sl}_2}}
\nc{\hsl}{\widehat{{\frak{sl}_2}}}
\nc{\BC}{{ \mathbb C}}
\nc{\lra}{\longrightarrow}
\nc{\CO}{{\mathcal{O}}}
\nc{\BZ}{{ \mathbb Z}}
\nc{\hfn}{\hat{\frak{n}}}
\def\mm{\text{\bf m}} 
\def\bm{\overline{m}} 
\begin{document}

\hrule width0pt
\vsk->

\title[The $\F_p$-Selberg integral]
{The $\F_p$-Selberg integral}

\author[R.\,Rim\'anyi and  A.\:Varchenko]
{Rich\'ard Rim\'anyi\,$^{\diamond}$ and  Alexander Varchenko\,$^{\star}$}

\maketitle

\begin{center}
{\it $^{\diamond, \star}$ Department of Mathematics, University
of North Carolina at Chapel Hill\\ Chapel Hill, NC 27599-3250, USA\/}

\vsk.5>
{\it $^{ \star}$ Faculty of Mathematics and Mechanics, Lomonosov Moscow State
University\\ Leninskiye Gory 1, 119991 Moscow GSP-1, Russia\/}

\vsk.5>
 {\it $^{ \star}$ Moscow Center of Fundamental and Applied Mathematics
\\ Leninskiye Gory 1, 119991 Moscow GSP-1, Russia\/}

\end{center}

\vsk>
{\leftskip3pc \rightskip\leftskip \parindent0pt \Small
{\it Key words\/}: Selberg integral, $\F_p$-integral, Morris' identity,  Aomoto recursion,
KZ equations,  reduction modulo  $p$

\vsk.6>
{\it 2010 Mathematics Subject Classification\/}: 13A35 (33C60, 32G20) 
\par}

{\let\thefootnote\relax
\footnotetext{\vsk-.8>\noindent
$^\diamond\<${\sl E\>-mail}:\enspace  rimanyi@email.unc.edu, 
supported in part by Simons Foundation grant 523882
\\
$^\star\<${\sl E\>-mail}:\enspace anv@email.unc.edu,
supported in part by NSF grant DMS-1954266}}

\begin{abstract}

We prove an $\F_p$-Selberg integral formula, in which
the $\F_p$-Selberg integral is an element of the finite field $\F_p$ with odd prime number $p$ of  elements.
The formula is motivated by analogy between  multidimensional hypergeometric solutions of the KZ equations and
 polynomial solutions of the same equations reduced modulo $p$.

\end{abstract}

{\small\tableofcontents\par}

\setcounter{footnote}{0}
\renewcommand{\thefootnote}{\arabic{footnote}}

\section{Introduction}

In 1944 Atle Selberg proved the following integral formula:
\bean
\label{clS}
&&
\int_0^1\dots\int_0^1 \prod_{1\leq i<j\leq n} (x_i-x_j)^{2\ga} \prod_{i=1}^nx_i^{\al-1} (1-x_i)^{\beta-1}\ dx_1\dots dx_n
\\
&&
\notag
\phantom{aaaaaa}
=\
\prod_{j=1}^n \frac{\Ga(1+j\ga)}{\Ga(1+\ga)}\,
\frac{\Ga(\al+(j-1)\ga)\,\Ga(\beta+(j-1)\ga)}
{\Ga(\al+\beta + (n+j-2)\ga)}\,,
\eean
see  \cite{Se1, AAR}
\footnote{\phantom{a}In \cite{Se2} Selberg remarks: \ 
``This paper was published with some hesitation, and in Norwegian,
since I was rather doubtful that the results were new. The journal
is one which is read by mathematics-teachers in the gymnasium,
and the proof was written out in some detail so it should be understandable
to someone who knew a little about analytic functions 
and analytic continuation.'' See more in \cite{FW}.}. 
Hundreds of papers are devoted to the generalizations of the Selberg
integral formula and its applications, see for example \cite{AAR, FW} and references 
therein.  There are $q$-analysis versions of the formula,
the generalizations associated with Lie algebras, elliptic versions, finite field versions,
see some references  in \cite{AAR, FW, As, Ha, Ka, Op, Ch, TV1, TV2, TV3, Wa1, Wa2, Sp, R,  FSV, An, Ev}.
In the finite field versions,
 one considers additive and multiplicative characters of a finite field, which 
map the  field to the field of
complex numbers, and forms an analog of equation \eqref{clS}, in which  both sides are complex numbers.
The simplest of such formulas is the classical relation between Jacobi and Gauss 
sums, see \cite{AAR, An, Ev}.

\vsk.2>
In this paper we suggest another  version of the Selberg integral formula, in which the
$\F_p$-Selberg integral is an element of the  finite field $\F_p$ with an odd prime number $p$  of elements,
see Theorem \ref{thm nd}.

\vsk.2>
Our motivation comes from the theory of the Knizhnik-Zamolodchikov 
(KZ) equations, see \cite{KZ, EFK}. These are the
  systems of linear differential equations, satisfied by
conformal blocks on the sphere in the WZW model of conformal field theory.  
The KZ equations were solved in multidimensional hypergeometric integrals in \cite{SV1}, see also \cite{V1,V2}.
The following general principle was formulated in \cite{MV}: \ if an example of the  KZ 
type equations has a one-dimensional space of solutions, then the 
corresponding multidimensional hypergeometric integral can be evaluated explicitly. 
As an illustration of that principle in \cite{MV},  an example of  KZ equations with a
one-dimensional space of solutions 
was considered, the corresponding multidimensional hypergeometric integral was reduced to the 
Selberg integral and  then evaluated by formula \eqref{clS}. Other illustrations see in \cite{FV, FSV, TV1, TV2, TV3, V3, RTVZ}.

\vsk.2>

Recently in \cite{SV2} the KZ equations were considered modulo a prime number $p$ and polynomial solutions of the 
reduced equations were constructed, see also \cite{SlV, V4, V5,V6,V7}. The construction
 is analogous to the construction
of the multidimensional hypergeometric solutions,
and the constructed polynomial solutions were called 
the  $\F_p$-hypergeometric solutions.

\vsk.2>
In this paper we consider the reduction modulo $p$ of the same example of the KZ equations, 
that led in \cite{MV} to the  Selberg integral. The space of solutions of the reduced KZ equations is still
one-dimensional and, according to the principle,
we  may expect that the corresponding $\F_p$-hypergeometric solution is related to 
a Selberg type formula. Indeed 
we have evaluated that $\F_p$-hypergeometric solution by analogy with the evaluation of the Selberg integral and
obtained our $\F_p$-Selberg integral formula in  Theorem \ref{thm nd}.

\vsk.2>  
The paper contains three proofs of our $\F_p$-Selberg integral formula. There might be more proofs. 
It would be interesting to see if our formula can be deduced from the known relations between the multidimensional
Gauss and Jacobi sums, see for example \cite[Section 8.11]{AAR}.

\smallskip

The paper is organized as follows. In Section \ref{sec 2} we collect useful facts.
In Section \ref{sec 3} we introduce the notion of $\F_p$-integral and discuss the
integral formula for the $\F_p$-beta integral.  In Section \ref{sec 4} we formulate our main result, Theorem
\ref{thm nd}, and prove it by developing an $\F_p$-analog of Aomoto's recursion,
 defined in \cite{Ao} for the Selberg integral.
In Section \ref{sec 5} we give another proof of Theorem \ref{thm nd}, based on Morris' identity, which 
 is deduced from the classical  Selberg integral formula \eqref{clS} in \cite{Mo}.
 In Section \ref{sec 6} we  sketch a third proof of Theorem \ref{thm nd} based on a 
  combinatorial identity, also deduced from the Selberg integral formula \eqref{clS}.
In Section \ref{sec 7} we discuss in more detail how our $\F_p$-Selberg integral 
formula is related to the $\F_p$-hypergeometric solutions
of KZ equations reduced modulo $p$.

\smallskip
The authors thank C.\,Bir\'o, I.\,Cherednik, P.\,Etingof, E.\,Rains, A.\,Slinkin for useful discussions.

\section{Preliminary remarks}
\label{sec 2}

\subsection{Lucas' Theorem}

\begin{thm} [\cite{L}] \label{thm L}
For nonnegative integers $m$ and $n$ and a prime $p$,
 the following congruence relation holds:
\bean
\label{Lucas}
\binom{n}{m}\equiv \prod _{i=0}^a \binom{n_i}{m_i}\quad (\on{mod}\ p),
\eean
where $m=m_{b}p^{b}+m_{b-1}p^{b-1}+\cdots +m_{1}p+m_{0}$ 
and  $n=n_{b}p^{b}+n_{b-1}p^{b-1}+\cdots +n_{1}p+n_{0}$
are the base $p$ expansions of $m$ and $n$ respectively. This uses the convention that 
$\binom{n}{m}=0$ if $n<m$.
\qed
\end{thm}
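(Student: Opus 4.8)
The plan is to prove Lucas' Theorem by a generating-function argument in the polynomial ring $\F_p[x]$, comparing the coefficients of $x^m$ in two expressions for $(1+x)^n$. The starting point is the Frobenius identity: since $p\mid\binom{p}{k}$ for $0<k<p$, the binomial theorem gives $(1+x)^p\equiv 1+x^p\pmod p$. Raising this to successive $p$-th powers (or inducting on $i$) then yields $(1+x)^{p^i}\equiv 1+x^{p^i}\pmod p$ for every $i\ge 0$.

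Next I would use the base-$p$ expansion $n=\sum_i n_i p^i$ to factor
\[
(1+x)^n=\prod_i\bigl((1+x)^{p^i}\bigr)^{n_i}\equiv\prod_i\bigl(1+x^{p^i}\bigr)^{n_i}\pmod p,
\]
the congruence being understood coefficientwise in $\F_p[x]$. The left-hand side has $\binom{n}{m}$ as the coefficient of $x^m$. On the right-hand side, expanding each factor $\bigl(1+x^{p^i}\bigr)^{n_i}$ by the binomial theorem, a contribution to $x^m$ arises by selecting an exponent $k_i$ with $0\le k_i\le n_i$ from the $i$-th factor, subject to $\sum_i k_i p^i=m$; the corresponding numerical coefficient is $\prod_i\binom{n_i}{k_i}$.

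The key step, and the only real obstacle, is the uniqueness of the base-$p$ digits. Because each chosen $k_i$ satisfies $k_i\le n_i<p$, the relation $\sum_i k_i p^i=m$ is exactly a base-$p$ representation of $m$, so by uniqueness $k_i=m_i$ for every $i$. Hence the coefficient of $x^m$ on the right is $\prod_i\binom{n_i}{m_i}$, and matching the two coefficients gives $\binom{n}{m}\equiv\prod_i\binom{n_i}{m_i}\pmod p$. The stated convention $\binom{n}{m}=0$ when $n<m$ is respected automatically: if some digit has $m_i>n_i$, then no valid choice $k_i=m_i$ exists so $x^m$ does not occur in the product, and correspondingly the factor $\binom{n_i}{m_i}$ on the right vanishes.
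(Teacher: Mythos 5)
Your proof is correct and complete: the Frobenius identity $(1+x)^p\equiv 1+x^p\pmod p$, the factorization of $(1+x)^n$ along the base-$p$ digits of $n$, and the appeal to uniqueness of base-$p$ representations (which hinges precisely on $k_i\le n_i<p$, the point you rightly flag as the key step) together constitute the standard generating-function proof of Lucas' theorem, and your handling of the degenerate case $m_i>n_i$ is also right. There is, however, nothing in the paper to compare it against: the paper states this theorem with a \qed\ and a citation to Lucas' original 1878 article, offering no proof of its own, since the result is classical and is used only as a known tool (e.g.\ in Lemma \ref{lem sym} and Theorem \ref{thm facto}). So your argument stands on its own as a valid self-contained proof of a statement the paper simply imports.
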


\subsection{Binomial lemma}

\begin{lem} [\cite{V7}]
\label{lem ga}
Let $a,b$ be positive integers such that $a<p$, $b<p$, $p\leq a+b$. Then we have an identity in $\F_p$,
\bean
\label{p-ga}
  b\,\binom{b-1}{a+b-p} 
  &=&
b\,\binom{b-1}{p-a-1}
=
 (-1)^{a+1}\,\frac{a! \,b!}
{(a+b-p)!}\,.
\eean
\qed

\end{lem}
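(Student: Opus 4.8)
The plan is to reduce everything to Wilson's theorem after a single factorial rewriting. First I would dispose of the left equality, which is purely formal: since $(b-1)-(a+b-p)=p-a-1$, the reflection symmetry of binomial coefficients gives $\binom{b-1}{a+b-p}=\binom{b-1}{(b-1)-(a+b-p)}=\binom{b-1}{p-a-1}$ at once, making no use of the prime $p$.

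For the substantive equality, set $k=a+b-p$ and note that the hypotheses $a<p$, $b<p$, $p\le a+b$ guarantee $0\le k$, $k<a$, and $k<b$, while $a\ge 1$ gives $0\le p-a-1\le p-2$. Hence every factorial appearing below has argument strictly less than $p$ and is therefore invertible in $\F_p$. Because $(b-1)-k=p-a-1$, I would write the binomial coefficient out explicitly:
\[
b\,\binom{b-1}{k}=b\cdot\frac{(b-1)!}{k!\,(p-a-1)!}=\frac{b!}{k!\,(p-a-1)!}.
\]
Comparing this with the claimed right-hand side $(-1)^{a+1}\,a!\,b!/k!$ and cancelling the common invertible factor $b!/k!$, the entire lemma collapses to the single congruence
\[
(p-a-1)!\equiv\frac{(-1)^{a+1}}{a!}\pmod p .
\]

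This last congruence is where Wilson's theorem enters, and it is the only genuine content. I would prove it by splitting the product $(p-1)!$ at the index $p-a$ and using $p-j\equiv -j$:
\[
(p-1)!=(p-a-1)!\cdot\prod_{j=1}^{a}(p-j)\equiv(p-a-1)!\cdot(-1)^{a}\,a!\pmod p .
\]
Since Wilson's theorem gives $(p-1)!\equiv-1\pmod p$, this reads $(-1)^{a}\,a!\,(p-a-1)!\equiv-1$, and dividing by the invertible quantity $(-1)^{a}a!$ yields $(p-a-1)!\equiv(-1)^{a+1}/a!$, using $(-1)^{-a}=(-1)^a$ and that $(-1)^{a+1}$ is its own inverse.

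I expect no real obstacle here: the argument is short, and the only nonroutine ingredients are the classical pairing $p-j\equiv -j$ together with Wilson's theorem. The single point requiring care is the bookkeeping on the hypotheses, namely verifying that $k=a+b-p$, $p-a-1$, and $a$ all lie in $\{0,\dots,p-1\}$, so that the factorials are genuinely invertible modulo $p$ and the cancellations above involve no division by zero.
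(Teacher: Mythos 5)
Your proof is correct and takes essentially the same route as the paper's: both expand the binomial coefficient into factorials and evaluate $(p-a-1)!$ via the pairing $p-j\equiv -j$ together with Wilson's theorem, the paper doing this inline in a single chained computation. Your intermediate congruence $a!\,(p-a-1)!\equiv(-1)^{a+1}\pmod{p}$ is precisely the paper's Lemma~\ref{lem ca}, so your organization just makes explicit the reduction that the paper performs implicitly.
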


\begin{proof}
We have
\bea
&&
b\binom{b-1}{p-a-1} = \frac{b(b-1) \cdots(a+b-p+1)}
{1\cdots (p-a-1)}
\\
&&
\phantom{aaa}
= \frac{b \cdots (a+b-p+1) (a+b-p)! a!}
{(-1)^{p-a-1} (p-1)(p-2) \cdots  (a+1)\,a!\,(a+b-p)!}
= (-1)^{a+1}\,\frac{a! \,b!}
{(a+b-p)!} \,.
\eea
\end{proof}

\subsection{Cancellation of factorials}

\begin{lem}
\label{lem ca}

If $a,b$ are nonnegative integers and  $a+b=p-1$, then in $\F_p$ we have
\bean
\label{ca}
a!\,b!\,=\,(-1)^{a+1}\,.
\eean

\end{lem}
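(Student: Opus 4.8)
The plan is to prove the identity $a!\,b! = (-1)^{a+1}$ in $\F_p$ under the hypothesis $a+b = p-1$ by recognizing that $a!\,b!$ is essentially a subproduct of $(p-1)!$ and then invoking Wilson's theorem. First I would observe that since $a+b=p-1$, the integers $b+1, b+2, \dots, b+a = p-1$ are exactly the $a$ consecutive integers strictly between $b$ and $p$. Multiplying $b!$ by the product $(b+1)(b+2)\cdots(p-1)$ gives $(p-1)!$, so one has the exact identity $(p-1)! = b!\,(b+1)(b+2)\cdots(p-1)$ over the integers.

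The key step is to reduce the tail product modulo $p$. For each $k$ with $1\le k\le a$, the factor $b+k$ satisfies $b+k \equiv -(a-k+1) \pmod p$, since $b+k = (p-1-a)+k = p-(a-k+1)$. Running $k$ from $1$ to $a$, the residues $-(a), -(a-1), \dots, -1$ appear, so the tail product is congruent to $(-1)^a\,a! \pmod p$. Substituting this into the integer identity yields
\beq
\label{wilson-step}
(p-1)! \,\equiv\, b!\,\cdot\,(-1)^a\,a! \pmod p.
\eeq

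Finally I would apply Wilson's theorem, $(p-1)!\equiv -1 \pmod p$, to the left-hand side of \eqref{wilson-step}, giving $-1 \equiv (-1)^a\,a!\,b!$ in $\F_p$. Multiplying both sides by $(-1)^a$ and using $(-1)^{2a}=1$ and $-(-1)^a=(-1)^{a+1}$ produces $a!\,b! = (-1)^{a+1}$, as claimed. I expect no serious obstacle here; the only point requiring a little care is the bookkeeping of signs when converting each factor $b+k$ to its negative residue, to be sure the exponent on $-1$ comes out as $a$ and not something off by one. An alternative route would be to derive \eqref{ca} directly from Lemma \ref{lem ga} by specializing to the boundary case where the lower binomial argument collapses, but the self-contained Wilson's theorem argument is cleaner and avoids checking that the hypotheses of Lemma \ref{lem ga} ($p\le a+b$) are compatible with the strict equality $a+b=p-1$.
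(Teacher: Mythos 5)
Your proof is correct and is essentially the paper's own argument: both reduce $a!\,b!$ to $(-1)^a(p-1)!$ by pairing each factor with its negative residue mod $p$ (using $p-a=b+1$) and then apply Wilson's theorem. The only difference is cosmetic — the paper rewrites $a!$ as $(-1)^a(p-1)\cdots(p-a)$, while you rewrite the tail $(b+1)\cdots(p-1)$ as $(-1)^a a!$; these are the same identity read in opposite directions.
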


\begin{proof} We have $a!=(-1)^a(p-1)\dots(p-a)$ and $p-a= b+1$. Hence
$a!\,b! = (-1)^a(p-1)!= (-1)^{a+1}$ by Wilson's Theorem.
\end{proof}

\section{$\F_p$-Integrals}
\label{sec 3}

\subsection{Definition}
Let $p$ be an odd prime number and $M$  an $\F_p$-module. Let $P(x_1,\dots,x_k)$ be a polynomial
with coefficients in $M$,
\bean
\label{St}
P(x_1,\dots,x_k) = \sum_{d}\, c_d \,x_1^{d_1}\dots x_k^{d_k}.
\eean
Let $l=(l_1,\dots,l_k)\in \Z_{>0}^k$. The coefficient
$c_{l_1p-1,\dots,l_kp-1}$ is called the {\it $\F_p$-integral over the cycle $[l_1,\dots,l_k]_p$}
and is denoted by $\int_{[l_1,\dots,l_k]_p} P(x_1,\dots,x_k)\,dx_1\dots dx_k$.

\begin{lem}
\label{lem sym}

For $i=1,\dots,k-1$ we have
\bean
\label{sym}
&&
\int_{[l_1,\dots,l_{i+1}, l_i,\dots, l_k]_p} P(x_1,\dots,x_{i+1},x_i,\dots,x_k)dx_1\dots dx_k
\\
\notag
&&
\phantom{aaaaa}
=\ 
\int_{[l_1,\dots,l_k]_p} P(x_1,\dots,x_k)\,dx_1\dots dx_k\,.
\eean
\qed
\end{lem}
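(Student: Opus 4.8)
The plan is to unwind the definition of the $\F_p$-integral on both sides of \eqref{sym} and observe that the two transpositions involved—one acting on the variables of $P$, the other on the entries of the cycle—exactly compensate each other, so that both sides extract the coefficient of the very same monomial of $P$. Everything reduces to careful bookkeeping of indices.

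First I would record what each side extracts. By the definition following \eqref{St}, the right-hand side is $c_{l_1p-1,\dots,l_kp-1}$, the coefficient of the monomial $x_1^{l_1p-1}\dots x_k^{l_kp-1}$ in the expansion \eqref{St} of $P$. For the left-hand side I would first describe the polynomial $Q(x_1,\dots,x_k)=P(x_1,\dots,x_{i+1},x_i,\dots,x_k)$ obtained by interchanging the variables $x_i$ and $x_{i+1}$. Expanding $Q$ via \eqref{St}, each monomial $x_1^{d_1}\dots x_k^{d_k}$ of $P$ turns into the monomial with the exponents in positions $i$ and $i+1$ swapped, still carrying the coefficient $c_d$. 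Consequently the coefficient in $Q$ of an arbitrary monomial $x_1^{a_1}\dots x_k^{a_k}$ equals $c_d$, where $d$ is obtained from $(a_1,\dots,a_k)$ by transposing its $i$-th and $(i+1)$-th entries.

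Next I would apply the definition of the $\F_p$-integral to the left-hand side. Over the cycle $[l_1,\dots,l_{i+1},l_i,\dots,l_k]_p$ it extracts from $Q$ the coefficient of the monomial with exponent $l_jp-1$ in position $j$ for $j\neq i,i+1$, and with exponents $l_{i+1}p-1$ and $l_ip-1$ in positions $i$ and $i+1$ respectively. By the computation of the previous step, this coefficient equals $c_d$, where $d$ undoes the transposition: it has $l_ip-1$ in position $i$ and $l_{i+1}p-1$ in position $i+1$, i.e. $d=(l_1p-1,\dots,l_kp-1)$. Hence the left-hand side equals $c_{l_1p-1,\dots,l_kp-1}$ as well, which matches the right-hand side and proves \eqref{sym}.

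I do not expect a genuine obstacle here, since the content is entirely formal. The one point I would verify with care is precisely that the transposition of the cycle entries is the inverse of the transposition of the variables, so that the two monomials whose coefficients are extracted on the two sides coincide; this is what makes the swap on $P$ and the swap on the cycle cancel.
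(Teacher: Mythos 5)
Your proof is correct: the two transpositions (on the variables of $P$ and on the entries of the cycle) cancel, so both sides extract the same coefficient $c_{l_1p-1,\dots,l_kp-1}$. The paper states this lemma with no written proof, treating it as immediate from the definition of the $\F_p$-integral, and your index bookkeeping is precisely that implicit argument made explicit.
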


\begin{lem}
\label{lem St}
For any $i=1,\dots,k$, we have
\bea
\int_{[l_1,\dots,l_k]_p} \frac{\der P}{\der x_i}(x_1,\dots,x_k) = 0\,.
\eea
\qed
\end{lem}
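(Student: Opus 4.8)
The plan is to read off the relevant coefficient directly from the definition of the $\F_p$-integral, since the statement is purely a matter of coefficient extraction. By the definition preceding Lemma \ref{lem sym}, the operator $\int_{[l_1,\dots,l_k]_p}$ applied to a polynomial returns its coefficient of the single monomial $x_1^{l_1p-1}\cdots x_k^{l_kp-1}$. Thus I must show that this particular coefficient of $\der P/\der x_i$ is $0$ in $\F_p$.

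Writing $P=\sum_d c_d\,x_1^{d_1}\cdots x_k^{d_k}$ as in \eqref{St}, differentiation gives
\[
\frac{\der P}{\der x_i}=\sum_d c_d\,d_i\,x_i^{d_i-1}\prod_{j\ne i}x_j^{d_j}.
\]
A term of this sum contributes to the coefficient of $x_1^{l_1p-1}\cdots x_k^{l_kp-1}$ precisely when $d_j=l_jp-1$ for every $j\ne i$ and $d_i-1=l_ip-1$, i.e. $d_i=l_ip$. There is exactly one such multi-index $d$, and the corresponding coefficient of $\der P/\der x_i$ equals $c_d\,d_i=l_ip\,c_d$, where $c_d\in M$ is the coefficient in $P$ of $x_1^{l_1p-1}\cdots x_i^{l_ip}\cdots x_k^{l_kp-1}$.

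The key observation is then simply that the integer factor $l_ip$ produced by differentiation is divisible by $p$, hence acts as $0$ on the $\F_p$-module $M$; therefore $l_ip\,c_d=0$ and the desired coefficient vanishes. I expect no genuine obstacle: the only point requiring care is the bookkeeping of exponents, namely that the target $x_i$-exponent $l_ip-1$ in $\der P/\der x_i$ arises from $x_i$-exponent $l_ip$ in $P$, which is exactly what forces the prefactor to be a multiple of $p$. This makes Lemma \ref{lem St} the $\F_p$-analog of the vanishing of boundary terms in integration by parts, playing the role of Stokes' theorem in the Aomoto-type recursion developed later.
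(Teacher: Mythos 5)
Your proof is correct and is precisely the argument the paper intends: the lemma is stated with no written proof (marked as immediate), and the intended justification is exactly your observation that the coefficient of $x_1^{l_1p-1}\cdots x_k^{l_kp-1}$ in $\der P/\der x_i$ comes from the monomial of $P$ with $x_i$-exponent $l_ip$, hence carries the integer factor $l_ip\equiv 0 \pmod p$, which kills it in the $\F_p$-module $M$. Nothing is missing; your exponent bookkeeping is the whole content of the lemma.
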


\subsection{$\F_p$-Beta integral}
For nonnegative integers the classical beta integral formula says
\bean
\label{bk}
\int_{0}^{1} x^{a}(1-x)^{b} dx =\frac{a!\,b!}{(a+b+1)!}\,.
\eean

\begin{thm} [\cite{V7}]
\label{thm bfun}
Let
$a<p$, $b<p$,  $p-1\leq a+b$. 
Then in $\F_p$ we have
\bean
\label{bf1}
\int_{[1]_p} x^a(1-x)^b dx\,
=  \, - \,\frac{a!\,b!}{(a+b-p+1)!}\,.
\eean
If $a+b<p-1$, then 
\bean
\label{bf2}
\int_{[1]_p} x^a(1-x)^b dx\,
=  0\,.
\eean

\end{thm}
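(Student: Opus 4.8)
The plan is to unwind the definition: by the definition of the $\F_p$-integral over the cycle $[1]_p$, the quantity $\int_{[1]_p} x^a(1-x)^b\,dx$ is exactly the coefficient of $x^{p-1}$ in the polynomial $x^a(1-x)^b\in\F_p[x]$. So the whole theorem reduces to extracting one binomial coefficient and simplifying it in $\F_p$.

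First I would expand $(1-x)^b=\sum_{j=0}^b(-1)^j\binom{b}{j}x^j$, so that $x^a(1-x)^b=\sum_j(-1)^j\binom{b}{j}x^{a+j}$. The only term contributing to $x^{p-1}$ is the one with $a+j=p-1$, i.e. $j=p-1-a$, and such a $j$ lies in the admissible range $0\le j\le b$ precisely when $a\le p-1$ (automatic, since $a<p$) and $p-1-a\le b$, i.e. $p-1\le a+b$. This immediately disposes of the second case: if $a+b<p-1$, then $\deg\big(x^a(1-x)^b\big)=a+b<p-1$, so the coefficient of $x^{p-1}$ vanishes, giving \eqref{bf2}.

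For the main case $p-1\le a+b$ the coefficient equals $(-1)^{p-1-a}\binom{b}{p-1-a}$, and since $p$ is odd $(-1)^{p-1-a}=(-1)^a$. It then remains to prove $(-1)^a\binom{b}{p-1-a}=-\,a!\,b!/(a+b-p+1)!$ in $\F_p$. Writing $\binom{b}{p-1-a}=b!/\big((p-1-a)!\,(a+b-p+1)!\big)$ (both denominator factorials are factorials of integers in $\{0,\dots,p-1\}$, hence invertible in $\F_p$), the one factor needing attention is $(p-1-a)!$. Here I would invoke Wilson's theorem in the form already used in Lemma \ref{lem ca}: from $(p-1)!=(p-1)(p-2)\cdots(p-a)\,(p-1-a)!\equiv(-1)^a\,a!\,(p-1-a)!$ together with $(p-1)!\equiv-1$, one gets $1/(p-1-a)!\equiv(-1)^{a+1}a!$. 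Substituting this collapses the expression to $-\,a!\,b!/(a+b-p+1)!$, which is \eqref{bf1}.

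Alternatively, more in the spirit of the preceding lemmas, I would reduce to the Binomial Lemma \ref{lem ga} via the identity $\binom{b}{p-1-a}=\tfrac{b}{a+b-p+1}\binom{b-1}{p-1-a}$ and then use $b\binom{b-1}{p-1-a}=(-1)^{a+1}a!\,b!/(a+b-p)!$, absorbing the factor $a+b-p+1$ into the factorial. The single place that requires care is the boundary case $a+b=p-1$, where $a+b-p+1=0$ makes that division illegal; there I would instead evaluate the coefficient directly as $(-1)^a\binom{b}{b}=(-1)^a$ and match it to $-a!\,b!/0!=-a!\,b!=(-1)^a$ using the cancellation Lemma \ref{lem ca}. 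Since the uniform Wilson computation of the previous paragraph sidesteps this split altogether, I expect that to be the cleaner route to write out, and indeed I anticipate no genuine obstacle beyond bookkeeping of signs and ensuring each factorial argument stays in $\{0,\dots,p-1\}$.
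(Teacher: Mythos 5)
Your proof is correct and follows the paper's proof almost verbatim: both expand $(1-x)^b$, observe that only the term $j=p-1-a$ can contribute to the coefficient of $x^{p-1}$ (which disposes of \eqref{bf2} at once when $a+b<p-1$), and then simplify $(-1)^a\binom{b}{p-1-a}$ in $\F_p$. The only divergence is the last step --- the paper cites Lemma \ref{lem ga}, whereas your main route evaluates $1/(p-1-a)!$ directly by Wilson's theorem (equivalently, Lemma \ref{lem ca}) --- and your version has the small merit of handling the boundary case $a+b=p-1$ uniformly, a case you correctly flag as needing separate care under a literal application of Lemma \ref{lem ga}, which is stated only for $p\le a+b$.
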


\begin{proof}
We have
$
x^a(1-x)^b = \sum_{k=0}^b(-1)^k \binom{b}{k} x^{k}\,,
$
and need  $a+k=p-1$. Hence $k=p-1-a$ and 
\bea
\int_{[1]_p} x^a(1-x)^bdx \,
= (-1)^{p-1-a}\binom{b}{p-1-a}.
\eea
Now Lemma \ref{lem ga} implies \eqref{bf1}. Formula \eqref{bf2} is clear.
\end{proof}

\section{$n$-Dimensional $\F_p$-Selberg integral}
\label{sec 4}

\subsection{$n$-Dimensional integral formulas}

The $n$-dimensional Selberg integral formulas for nonnegative integers $a,b,c$ are
\bean
\label{cSn}
&&
\int_0^1\dots\int_0^1 \prod_{1\leq i<j\leq n} (x_i-x_j)^{2c} \prod_{i=1}^nx_i^a (1-x_i)^b\ dx_1\dots dx_n
\\
&&
\notag
\phantom{aaaaaa}
=
\prod_{j=1}^n \frac{(jc)!}{c!}\,
\frac{(a+(j-1)c)!\,(b+(j-1)c)!}
{(a+b + (n+j-2)c+1)!}\,,
\eean
and for $k=1,\dots,n-1$,
\bean
\label{cAn}
&&
\int_0^1\dots\int_0^1\prod_{i=1}^kx_i
\prod_{1\leq i<j\leq n} (x_i-x_j)^{2c} \prod_{i=1}^nx_i^a (1-x_i)^b\ dx_1\dots dx_n
\\
&&
\notag
\phantom{aa}
=\,
\prod_{j=1}^k \frac{a+(n-j)c+1}{a+b+(2n-j-1)c+1}\,
\prod_{j=1}^n \frac{(jc)!}{c!}\,
\frac{(a+(j-1)c)!\,(b+(j-1)c)!}
{(a+b + (n+j-2)c+2)!}\,,
\eean
\cite{Se1, Ao, AAR}.

\begin{thm}
\label{thm nd}

Assume that $a,b,c$ are nonnegative integers such that
\bean
\label{abc n eq}
 p-1\leq a+b+(n-1)c,
\qquad a+b+(2n-2)c < 2p-1\ .
\eean
Then we have an integral formula in $\F_p$:
\bean
\label{main n}
&&
\int_{[1,\dots,1]_p} \prod_{1\leq i<j\leq n} (x_i-x_j)^{2c} \prod_{i=1}^nx_i^a (1-x_i)^b\ dx_1\dots dx_n
\\
&&
\notag
\phantom{aaaaaa}
=
(-1)^n\,\prod_{j=1}^n \frac{(jc)!}{c!}\,
\frac{(a+(j-1)c)!\,(b+(j-1)c)!}
{(a+b + (n+j-2)c+1-p)!}\,.
\eean
Also, if  $k=1,\dots,n-1$, and
\bean
\label{abc ine}
 p-1\leq a+b+(n-1)c,
\qquad 
a+b+(2n-2)c < 2p-2\ ,
\eean
then
\bean
\label{mainIn}
&&
\int_{[1,\dots,1]_p}
\prod_{i=1}^kx_i\prod_{1\leq i<j\leq n} (x_i-x_j)^{2c} \prod_{i=1}^nx_i^a (1-x_i)^b\ dx_1\dots dx_n
\\
&&
\notag
\phantom{a}
= \,(-1)^n\,
\prod_{j=1}^k \frac{a+(n-j)c+1}{a+b+(2n-j-1)c+2}\,
\prod_{j=1}^n \frac{(jc)!}{c!}\,
\frac{(a+(j-1)c)!\,(b+(j-1)c)!}
{(a+b + (n+j-2)c+1-p)!}\,.
\eean
\end{thm}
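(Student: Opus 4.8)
The plan is to transplant Aomoto's inductive evaluation \cite{Ao} of the classical Selberg integral \eqref{cSn}--\eqref{cAn} to the $\F_p$-integral, with Lemma~\ref{lem St} playing the role of ``integration by parts with vanishing boundary terms'' and Lemma~\ref{lem sym} the role of permutation symmetry. Write
\[
\Phi=\prod_{1\le i<j\le n}(x_i-x_j)^{2c}\prod_{i=1}^n x_i^a(1-x_i)^b,
\qquad
\langle f\rangle=\int_{[1,\dots,1]_p} f\,\Phi\,dx_1\dots dx_n,
\]
so that \eqref{main n} asserts the value of $S_n:=\langle 1\rangle$ and \eqref{mainIn} the value of $\langle x_1\cdots x_k\rangle$. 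The essential observation is that, since $\Phi$ is a genuine polynomial, applying Lemma~\ref{lem St} to $P=g\,\Phi$ produces linear relations among the $\langle\cdot\rangle$ that are \emph{formally identical} to Aomoto's relations for the ordinary integrals: the computation of $\partial_{x_k}(g\Phi)$ is the same. Thus the $\F_p$-novelty resides not in the recursions but in the base case and in the bookkeeping of which factorial arguments remain in $\{0,\dots,p-1\}$.

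First I would derive the horizontal (moment) recursion. Choosing $g=x_1\cdots x_{k-1}\,x_k(1-x_k)$, the factors $x_k$ and $1-x_k$ clear the apparent poles $a/x_k$ and $b/(1-x_k)$ coming from $\partial_{x_k}\Phi$, and the Vandermonde contribution $2c\sum_{l\ne k}x_k(1-x_k)/(x_k-x_l)$ is a polynomial (as $2c\ge1$) that one collapses by symmetrizing over the variables not singled out, via Lemma~\ref{lem sym}. After collecting terms, the identity $\int_{[1,\dots,1]_p}\partial_{x_k}(g\Phi)\,dx_1\dots dx_n=0$ of Lemma~\ref{lem St} becomes the two-term relation
\[
\big(a+b+(2n-k-1)c+2\big)\,\langle x_1\cdots x_k\rangle
=\big(a+(n-k)c+1\big)\,\langle x_1\cdots x_{k-1}\rangle .
\]
Iterating from $k$ down to $0$ gives $\langle x_1\cdots x_k\rangle=S_n\prod_{j=1}^k\frac{a+(n-j)c+1}{a+b+(2n-j-1)c+2}$, exactly the prefactor of \eqref{mainIn}; so \eqref{mainIn} reduces to \eqref{main n} once the denominators are seen to be nonzero, and indeed \eqref{abc ine} places each denominator $a+b+(2n-j-1)c+2$ strictly between $p$ and $2p$.

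To evaluate $S_n$ itself I would induct on $n$. The base case $n=1$ is precisely the $\F_p$-beta integral \eqref{bf1}, which supplies at once the sign $-1$ and the replacement of $(a+b+1)!$ by $(a+b+1-p)!$. For the inductive step I would establish a dimensional recursion expressing $S_n(a,b,c)$ through $S_{n-1}(a+c,b+c,c)$ by integrating out the dependence on one variable against the remaining Vandermonde; its reduction factor has the shape of a one-variable beta-type $\F_p$-integral (a single factorial in the denominator), to which Theorem~\ref{thm bfun} applies, contributing one more sign $-1$ and one more shift of a denominator factorial by $p$. Accumulated over the $n$ steps, these yield the global sign $(-1)^n$ and turn each classical factorial $(a+b+(n+j-2)c+1)!$ of \eqref{cSn} into $(a+b+(n+j-2)c+1-p)!$, which is \eqref{main n}; one checks that $(a+c,b+c)$ again satisfy \eqref{abc n eq}, keeping the induction inside the admissible range.

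The hardest part, and the real content beyond the classical argument, is this range bookkeeping together with the derivation of the dimensional recursion. Two things must be verified simultaneously. First, the symmetrization of the Vandermonde sum must be shown to produce exactly the integer coefficient $(2n-k-1)c$, and no intermediate $\F_p$-integral may silently vanish because some exponent fails to reach $p-1$; this is a compatibility statement between the degree of $\Phi$ and the cycle $[1,\dots,1]_p$. Second, every factorial argument arising in the induction must lie in $\{0,\dots,p-1\}$, which is exactly what the inequalities guarantee: the lower bound $p-1\le a+b+(n-1)c$ forces each reduction into the regime where the beta integral takes its nonzero shifted value \eqref{bf1} rather than vanishing, while the upper bound $a+b+(2n-2)c<2p-1$ (sharpened to $<2p-2$ in the presence of $\prod_{i=1}^k x_i$) prevents a second wraparound past $p$. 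The degenerate sub-cases in which an individual exponent reaches $p$, so that Theorem~\ref{thm bfun} does not literally apply, must be disposed of separately; there both sides vanish modulo $p$, so the formula persists.
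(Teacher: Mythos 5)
Your first half is sound and coincides with the paper's own argument: the relation $\bigl(a+b+(2n-k-1)c+2\bigr)\langle x_1\cdots x_k\rangle=\bigl(a+(n-k)c+1\bigr)\langle x_1\cdots x_{k-1}\rangle$ obtained from Lemma \ref{lem St} is exactly the paper's Lemmas \ref{lem Ik}--\ref{lem Reln}, and your check that the denominators lie strictly between $p$ and $2p$ correctly reduces \eqref{mainIn} to \eqref{main n}. The genuine gap is in the second half. Your evaluation of $S_n$ rests on a ``dimensional recursion expressing $S_n(a,b,c)$ through $S_{n-1}(a+c,b+c,c)$,'' which you never derive, and which is in fact the entire difficulty of the theorem. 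Integrating out one variable does not produce a single $(n-1)$-dimensional Selberg integral: expanding $\prod_{i<n}(x_i-x_n)^{2c}$ and integrating over $x_n$ yields a large sum of $(n-1)$-dimensional integrals with extra monomial insertions, and collapsing that sum to a single $S_{n-1}$ with shifted parameters requires a serious summation identity --- this is precisely the role of the combinatorial identity of Theorem \ref{thm:identity} in the paper's third proof (Section \ref{sec 6}). Observing that the would-be reduction factor ``has the shape of a one-variable beta-type integral'' is consistency with the final answer, not an argument.

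There is a second, related omission. Even granted in full, your Aomoto recursion only fixes the dependence of $S_n(a,b,c)$ on $a$, up to an overall function of $b$ and $c$. Classically that normalization is supplied by analytic tools (asymptotics in $a$, Carlson's theorem), which do not exist over $\F_p$; so some algebraic substitute is mandatory. The paper's key idea, absent from your proposal, is to take as base case the hyperplane $a+b+(n-1)c=p-1$ \emph{inside} the parameter region: there the $\F_p$-integral becomes a constant-term computation evaluated by Dyson's formula (Lemmas \ref{lem ind n} and \ref{lem ind n2}), and the symmetry $S_n(a,b,c)=S_n(b,a,c)$, proved via Lucas' Theorem, allows one to reach every admissible $(a,b)$ from that hyperplane using the recursion in $a$ alone. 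In other words, the correct induction is on $a$ and $b$ with a Dyson-formula base case, not on $n$ with a beta-integral base case; without the unproven dimensional recursion, your $n=1$ base case cannot be propagated.
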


The first proof of Theorem \ref{thm nd} is given  in  
Sections \ref{sec mp} - \ref{sec Aom rec}, the second in Section \ref{sec 5},
 and the third one is sketched in Section \ref{sec 6}.

\begin{rem}
Formula \eqref{main n} can be rewritten as 
\bean
\label{main n sum}
&&
\sum_{x_1,\dots,x_n\in\F_p}\ \prod_{1\leq i<j\leq n} (x_i-x_j)^{2c} \prod_{i=1}^nx_i^a (1-x_i)^b\ dx_1\dots dx_n
\\
&&
\notag
\phantom{aaaaaa}
=
\,\prod_{j=1}^n \frac{(jc)!}{c!}\,
\frac{(a+(j-1)c)!\,(b+(j-1)c)!}
{(a+b + (n+j-2)c+1-p)!}\,.
\eean

\end{rem}

\begin{rem}
The fact that the $\F_p$-Selberg integral in the left-hand side of \eqref{main n} equals an explicit alternating
product  in the right-hand side of \eqref{main n} is surprising. But even more surprising is the fact that
the alternating product in the right-hand side is exactly  the alternating product staying in the classical formula
\eqref{cSn} with just several of factorials shifted by $p$.

\end{rem}

\begin{rem}
The Selberg integral \eqref{cSn} is related to the $\slt$ KZ differential equations, see Section \ref{sec 7},
 and is called the Selberg integrals of type $A_1$. 
The Selberg integrals of types $A_n$, related to the $\frak{sl}_{n+1}$ KZ differential equations, are introduced in
\cite{TV3, Wa1, Wa2}.
\vsk.2>

We call the $\F_p$-integral \eqref{main n}  the $\F_p$-Selberg integral of type $A_1$.
The $\F_p$-Selberg integral of type $A_n$, $n>1$, are introduced in \cite{RV}.
The $\F_p$-Selberg integral formula of type $A_n$ is deduced in \cite{RV} for the $\F_p$-Selberg integral formula
\eqref{main n} by induction on $n$.

\end{rem}

\begin{rem}
The integral analogous to \eqref{main n} but with $x_i-x_j$ factors raised to an odd power vanishes: 
\begin{equation}\label{odd case}
\int_{[1,\dots,1]_p} \prod_{1\leq i<j\leq n} (x_i-x_j)^{2c+1} \prod_{i=1}^nx_i^a (1-x_i)^b\ dx_1\dots dx_n=0.
\end{equation}
Indeed, after expanding the $(x_1-x_2)^{2c+1}$ factor, the integral \eqref{odd case} equals
\[
\sum_{m=0}^{2c+1}(-1)^{m+1} \binom{2c+1}{m} \int_{[1,\dots,1]_p} x_1^{a+m} x_2^{a+(2c+1-m)} f(x_1,\ldots,x_n) \ dx_1\dots dx_n=0,
\]
with $f$ symmetric in $x_1$ and $x_2$. The terms corresponding to $m$ and $2c+1-m$ cancel each other, making the sum 0.
\end{rem}

\subsection{Auxiliary lemmas}
\label{sec mp}
Denote
\bean
\label{P2}
P_n(a,b,c) \,=\,(-1)^n\,
\prod_{j=1}^n \frac{(jc)!}{c!}\,
\frac{(a+(j-1)c)!\,(b+(j-1)c)!}
{(a+b + (n+j-2)c+1-p)!}\,.
\eean
The polynomial
\bea
\Phi(x_1,\dots, x_n,a,b,c)= 
 \prod_{1\leq i<j\leq n} (x_i-x_j)^{2c} \prod_{i=1}^nx_i^a (1-x_i)^b
 \eea
is called the {\it master polynomial}. Denote
\bea
S_n(a,b,c) 
&=&
 \int_{[1,\dots,1]_p} \prod_{1\leq i<j\leq n} (x_i-x_j)^{2c} \prod_{i=1}^nx_i^a (1-x_i)^b dx_1\dots dx_n\,,
\\
S_{k,n}(a,b,c) 
&=&
 \int_{[1,\dots,1]_p} \prod_{i=1}^k x_i
 \prod_{1\leq i<j\leq n} (x_i-x_j)^{2c} \prod_{i=1}^nx_i^a (1-x_i)^b\, dx_1\dots dx_n\,,
\eea
for $k=0,\dots,n$.  Then $S_{0,n}(a,b,c)= S_n(a,b,c)$, $S_{n,n}(a,b,c)= S_n(a+1,b,c)$.  
By \eqref{sym}, we also have
\bea
S_{k,n}(a,b,c) = 
 \int_{[1,\dots,1]_p} \prod_{i=1}^k x_{\si_i}
 \prod_{1\leq i<j\leq n} (x_i-x_j)^{2c} \prod_{i=1}^nx_i^a (1-x_i)^b\, dx_1\dots dx_n\,
\eea
for any $1\leq \si_1<\dots<\si_k\leq n$.

\begin{lem}
\label{lem b=b+p}
We have $S_n(a,b+p,c) = S_n(a,b,c)$.

\end{lem}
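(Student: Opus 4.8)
The plan is to exploit the Frobenius identity in $\F_p[x]$, namely $(1-x)^p=1-x^p$, which holds because $\binom{p}{k}\equiv 0\pmod p$ for $0<k<p$. Writing the master polynomial with second parameter $b+p$ and using $(1-x_i)^{b+p}=(1-x_i)^b(1-x_i)^p=(1-x_i)^b(1-x_i^p)$, I would factor
\bea
\Phi(x_1,\dots,x_n,a,b+p,c)
=\Phi(x_1,\dots,x_n,a,b,c)\,\prod_{i=1}^n\bigl(1-x_i^p\bigr),
\eea
so that the only change in passing from $b$ to $b+p$ is multiplication by the extra factor $\prod_{i=1}^n(1-x_i^p)$.

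Next I would recall that, by definition, $S_n(a,b,c)$ is the coefficient of $x_1^{p-1}\cdots x_n^{p-1}$ in the corresponding master polynomial. Expanding the extra factor as $\prod_{i=1}^n(1-x_i^p)=\sum_{T\subseteq\{1,\dots,n\}}(-1)^{|T|}\prod_{i\in T}x_i^p$, I would extract the coefficient of $x_1^{p-1}\cdots x_n^{p-1}$ term by term. For a subset $T$, the contribution is $(-1)^{|T|}$ times the coefficient in $\Phi(x_1,\dots,x_n,a,b,c)$ of the monomial whose exponent equals $p-1$ in each variable with index outside $T$ and equals $p-1-p=-1$ in each variable with index in $T$.

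The key point is that $\Phi(x_1,\dots,x_n,a,b,c)$ is an honest polynomial and therefore has no monomials with negative exponents; hence every term with $T\neq\emptyset$ contributes zero, and only the term $T=\emptyset$ survives. That surviving term is precisely the coefficient of $x_1^{p-1}\cdots x_n^{p-1}$ in $\Phi(x_1,\dots,x_n,a,b,c)$, i.e.\ $S_n(a,b,c)$, which gives $S_n(a,b+p,c)=S_n(a,b,c)$. I do not expect any genuine obstacle: the whole argument is a purely formal consequence of the Frobenius identity together with the definition of the $\F_p$-integral as a single coefficient. In particular, it uses none of the inequalities on $a,b,c$, so the identity holds for all nonnegative integers $a,b,c$; the only thing to state carefully is the vanishing of the $T\neq\emptyset$ terms via the negative-exponent observation.
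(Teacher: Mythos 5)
Your proof is correct and follows essentially the same route as the paper: the paper's proof also uses $(1-x_i)^{b+p}=(1-x_i)^b(1-x_i^p)$ via the Frobenius identity and then notes that the extra factor $\prod_i(1-x_i^p)$ cannot affect the coefficient of $x_1^{p-1}\cdots x_n^{p-1}$. Your explicit expansion over subsets $T$ and the negative-exponent observation just spell out in detail what the paper states in one line.
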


\begin{proof}
We have $(1-x_i)^{b+p} = (1-x_i)^{b}(1-x_i)^{p} =(1-x_i)^{b}(1-x_i^p)$. Hence the factors
$(1-x_i)^{b}$ and $(1-x_i)^{b+p}$ contribute to the coefficient of $x_i^{p-1}$ in the same way.
\end{proof}

\begin{lem}
\label{lem n<p}
If $a+b+(2n-2)c < 2p-2$ and $c>0$, then $n<p$.
\qed
\end{lem}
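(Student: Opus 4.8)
The plan is to extract the bound $n<p$ directly from the hypothesis by discarding the nonnegative contributions of $a$ and $b$ and then using that $c$ is a positive integer. First I would observe that, since $a$ and $b$ are nonnegative integers, the assumption $a+b+(2n-2)c<2p-2$ immediately implies $(2n-2)c<2p-2$, and dividing by $2$ gives $(n-1)\>c<p-1$.

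Next, since $c>0$ is an integer we have $c\geq 1$, so that $n-1\leq(n-1)\>c<p-1$. This yields $n-1<p-1$ and hence $n<p$, as claimed.

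There is essentially no obstacle here; the only point to be careful about is that $c$ must be a \emph{positive integer} rather than merely a positive real, so that the estimate $c\geq 1$ is legitimate. This is exactly the content of the hypothesis $c>0$ combined with the standing assumption of Theorem \ref{thm nd} that $a,b,c$ are nonnegative integers. Note also that the hypotheses on $a$ and $b$ are only used to drop their nonnegative terms, so the argument does not require either of them to be strictly positive.
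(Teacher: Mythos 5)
Your proof is correct: dropping the nonnegative terms $a+b$ gives $(n-1)c<p-1$, and $c\geq 1$ then forces $n-1<p-1$. The paper states this lemma without proof (it is marked as immediate), and your argument is exactly the intended one, so there is nothing to compare beyond noting that you have simply written out the omitted details.
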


\begin{lem}
\label{lem S=01}
If $a+b+(n-1)c < p-1$, then $S_n(a,b,c)=0$.
\end{lem}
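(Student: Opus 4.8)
The plan is to prove this by a total-degree count: I will observe that $S_n(a,b,c)$ is by definition the coefficient of a single monomial, and that under the stated hypothesis this monomial has degree strictly larger than the master polynomial itself, forcing the coefficient to vanish.

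First I would recall the definition of the $\F_p$-integral: the symbol $\int_{[1,\dots,1]_p}$ extracts the coefficient $c_{p-1,\dots,p-1}$, that is, the coefficient of $x_1^{p-1}\cdots x_n^{p-1}$. Thus $S_n(a,b,c)$ is precisely the coefficient of $\prod_{i=1}^n x_i^{p-1}$ in the master polynomial $\Phi(x_1,\dots,x_n,a,b,c)=\prod_{1\le i<j\le n}(x_i-x_j)^{2c}\prod_{i=1}^n x_i^a(1-x_i)^b$.

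Next I would compute the total degree of $\Phi$. The Vandermonde-type factor $\prod_{i<j}(x_i-x_j)^{2c}$ contributes $2c\binom{n}{2}=cn(n-1)$, the factor $\prod_i x_i^a$ contributes $na$, and the factor $\prod_i(1-x_i)^b$ contributes $nb$, so that $\deg\Phi=n\bigl(a+b+(n-1)c\bigr)$. Meanwhile the monomial $\prod_i x_i^{p-1}$ whose coefficient we seek has total degree $n(p-1)$.

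Finally I would compare the two degrees. Under the hypothesis $a+b+(n-1)c<p-1$ we have $n\bigl(a+b+(n-1)c\bigr)<n(p-1)$, so $\Phi$ has total degree strictly smaller than $n(p-1)$ and hence contains no monomial of degree $n(p-1)$ at all; in particular the coefficient of $\prod_i x_i^{p-1}$ is $0$, giving $S_n(a,b,c)=0$. I do not expect any genuine obstacle here, since the argument is a pure degree comparison rather than an arithmetic computation in $\F_p$; the only thing worth flagging is that the inequality used is exactly the negation of the standing hypothesis $p-1\le a+b+(n-1)c$ of Theorem \ref{thm nd}, so this lemma simply disposes of the complementary degenerate range in which the integral is identically zero.
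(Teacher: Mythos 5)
Your proof is correct and is essentially the paper's own argument: the paper's proof consists of the single observation that the coefficient of $x_1^{p-1}\cdots x_n^{p-1}$ in $\Phi$ vanishes, with the total-degree comparison $\deg\Phi = n\bigl(a+b+(n-1)c\bigr) < n(p-1)$ left implicit. You have simply spelled out that implicit degree count, which is the right justification.
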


\begin{proof}
The coefficient 
of $x_1^{p-1}\dots x_n^{p-1}$ in the expansion of $\Phi(a,b,c)$ equals zero.
\end{proof}

\begin{lem}
\label{lem S=0}
If $p\leq a+(n-1)c$, then $S_n(a,b,c)=0$.

\end{lem}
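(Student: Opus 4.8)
The plan is to unwind the definition of the $\F_p$-integral and reduce the claim to a pure degree count on monomials. By definition, $S_n(a,b,c)$ is the coefficient of $x_1^{p-1}\dots x_n^{p-1}$ in the master polynomial $\Phi(x_1,\dots,x_n,a,b,c)$. I would expand $\Phi$ into a sum of monomials and show that, under the hypothesis $p\le a+(n-1)c$, no monomial can contribute to $x_1^{p-1}\dots x_n^{p-1}$, so that this coefficient is an empty sum, hence $0$.

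First I would record the relevant degrees. The factor $\prod_{1\le i<j\le n}(x_i-x_j)^{2c}$ is homogeneous of total degree $2c\binom n2=cn(n-1)$; write a typical monomial in its expansion as $\pm\prod_{i=1}^n x_i^{e_i}$ with $e_i\ge 0$ and $\sum_{i=1}^n e_i=cn(n-1)$. Expanding each $(1-x_i)^b=\sum_{m_i\ge 0}(-1)^{m_i}\binom{b}{m_i}x_i^{m_i}$ and multiplying by $\prod_{i=1}^n x_i^a$, the exponent of $x_i$ in a generic monomial of $\Phi$ is $a+e_i+m_i$ with $m_i\ge 0$.

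Next I would impose the extraction condition. For such a monomial to equal $x_1^{p-1}\dots x_n^{p-1}$ we need $a+e_i+m_i=p-1$ for every $i$, hence $e_i\le p-1-a$ for every $i$ (using $m_i\ge 0$). Summing over $i$ gives $cn(n-1)=\sum_{i=1}^n e_i\le n(p-1-a)$, i.e.\ $a+(n-1)c\le p-1$. This contradicts the hypothesis $p\le a+(n-1)c$. Therefore the set of contributing monomials is empty and $S_n(a,b,c)=0$.

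I expect no genuine obstacle here, since the argument is a homogeneity/nonnegativity count; the work is entirely in tracking that the Vandermonde factor contributes the fixed total degree $cn(n-1)$ while each coordinate exponent is forced below $p-1-a$. The one point to state carefully is that the conclusion follows because \emph{no} term contributes at all, so the coefficient vanishes in $\F_p$ regardless of any characteristic-$p$ cancellations; in particular the possible vanishing of the binomial coefficients $\binom{b}{m_i}$ modulo $p$ plays no role.
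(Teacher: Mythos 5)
Your proof is correct and is essentially the paper's own argument: the paper also expands $\Phi$ into monomials and observes that since the Vandermonde factor has total degree $cn(n-1)$, every monomial must have some exponent $d_i \ge a+(n-1)c \ge p > p-1$, so the coefficient of $x_1^{p-1}\cdots x_n^{p-1}$ is an empty sum. Your summation-and-contradiction phrasing and the paper's averaging phrasing are the same degree count, so there is nothing further to add.
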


\begin{proof}
Expand $\Phi(x,a,b,c)$ in monomials $x_1^{d_1}\dots x_n^{d_n}$.
If $p\leq a+(n-1)c$, then  each monomial $x_1^{d_1}\dots x_n^{d_n}$
in the expansion has at least one of $d_1,\dots,d_n$ greater than $p-1$,
Hence the coefficient of $x_1^{p-1}\dots x_n^{p-1}$ in the expansion equals zero.
\end{proof}

\begin{lem}
\label{lem sym}
If $a+b+(2n-2)c<2p-1$, then $S_n(a,b,c)=S_n(b,a,c)$.
\end{lem}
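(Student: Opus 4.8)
The plan is to exploit the substitution $x_i\mapsto 1-x_i$, which interchanges the roles of $a$ and $b$ in the master polynomial. Since $2c$ is even, $(x_j-x_i)^{2c}=(x_i-x_j)^{2c}$, and a direct computation gives
\[
\Phi(1-x_1,\dots,1-x_n,a,b,c)=\prod_{i<j}(x_i-x_j)^{2c}\prod_{i=1}^n x_i^b(1-x_i)^a=\Phi(x_1,\dots,x_n,b,a,c).
\]
Thus it suffices to show that the $\F_p$-integral, i.e.\ the operation of extracting the coefficient of $x_1^{p-1}\cdots x_n^{p-1}$, is left unchanged when each $x_i$ is replaced by $1-x_i$, under the hypothesis that the integrand has degree at most $2p-2$ in every variable. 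The assumption $a+b+(2n-2)c<2p-1$ is precisely the statement that $\deg_{x_i}\Phi\le 2p-2$ for each $i$, since $x_i$ occurs with degree $2c(n-1)$ in the Vandermonde factor and degree $a+b$ in $x_i^a(1-x_i)^b$.

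The heart of the argument is a one-variable lemma: if $Q(x)=\sum_{d=0}^{2p-2}c_d\,x^d$ has degree at most $2p-2$, then the coefficient of $x^{p-1}$ in $Q(1-x)$ equals $c_{p-1}$. To see this, expand $Q(1-x)=\sum_d c_d(1-x)^d$; the coefficient of $x^{p-1}$ is $\sum_d c_d\binom{d}{p-1}(-1)^{p-1}=\sum_d c_d\binom{d}{p-1}$, because $p-1$ is even. For $0\le d\le p-2$ the binomial vanishes trivially, for $d=p-1$ it equals $1$, and for $p\le d\le 2p-2$ the numerator $d!$ contains exactly one factor of $p$ while $(p-1)!$ and $(d-p+1)!$ contain none, whence $\binom{d}{p-1}\equiv 0\pmod p$ (this also follows at once from Lucas' Theorem \ref{thm L}). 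Hence only the term $d=p-1$ survives, giving $c_{p-1}$.

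From here I would bootstrap to $n$ variables one variable at a time. Applying the one-variable lemma in $x_1$, with the remaining variables regarded as parameters, shows that the coefficient of $x_1^{p-1}$ in $\Phi(x,a,b,c)$ and in $\Phi(1-x_1,x_2,\dots,x_n,a,b,c)$ agree as polynomials in $x_2,\dots,x_n$. Extracting the coefficient of $x_1^{p-1}$ does not raise the degree in any other variable, so the bound $\deg_{x_i}\le 2p-2$ is preserved and the lemma may be reapplied to $x_2$, then $x_3$, and so on. Composing the $n$ successive substitutions and invoking $\Phi(1-x,a,b,c)=\Phi(x,b,a,c)$ yields $S_n(a,b,c)=S_n(b,a,c)$. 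The step demanding the most care is exactly this bookkeeping: one must verify that each coefficient extraction keeps the per-variable degrees at most $2p-2$ so that the one-variable lemma remains applicable, and that the affine substitution in one variable commutes with extraction in the others.
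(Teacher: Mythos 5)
Your proof is correct and follows essentially the same route as the paper: both exploit the substitution $x_i\mapsto 1-x_i$ (which swaps $a$ and $b$ in $\Phi$) together with the fact that $\binom{d}{p-1}\equiv 0 \pmod p$ for $p\le d\le 2p-2$ (Lucas' Theorem), the hypothesis $a+b+(2n-2)c<2p-1$ serving exactly to bound every per-variable degree by $2p-2$. The only difference is presentational: the paper extracts the coefficient of $x_1^{p-1}\cdots x_n^{p-1}$ in all variables at once (where it factors as a product of binomials, one per variable), while you serialize the argument one variable at a time with the attendant bookkeeping.
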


\begin{proof}
Expand $\Phi(x,a,b,c)$ in monomials $x_1^{d_1}\dots x_n^{d_n}$.
If $a+b+(2n-2)c <2p-1$, then

\vsk.2>
\noindent
(a)\  for each monomial $x_1^{d_1}\dots x_n^{d_n}$
in the expansion all of $d_1,\dots,d_n$ are less than $2p-1$.

\vsk.2>

We also have
\bea
\Phi(1-y_1,\dots,1-y_n,a,b,c)= 
\prod_{1\leq i<j\leq n} (y_i-y_j)^{2c} \prod_{i=1}^ny_i^a (1-y_i)^b\,.
\eea
This transformation does not change the $\F_p$-integral due to Lucas' Theorem
and property (a), see a similar reasoning in  the proof of \cite[Lemma 5.2]{V5}.
\end{proof}

\subsection{Case $a+b+(n-1)c=p-1$}
\label{sec indn}

\begin{lem}
\label{lem ind n}
If $a+b+(n-1)c=p-1$, then
\bean
\label{Sn ind}
S_n(a,b,c) = (-1)^{bn+cn(n-1)/2}\,\frac{(cn)!}{(c!)^n}\, .
\eean

\end{lem}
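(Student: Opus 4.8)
The plan is to exploit that the hypothesis $a+b+(n-1)c=p-1$ makes the master polynomial $\Phi(x_1,\dots,x_n,a,b,c)$ have top degree exactly $n(p-1)$, which coincides with the degree of the monomial $x_1^{p-1}\dots x_n^{p-1}$ whose coefficient defines $S_n(a,b,c)$. Indeed $\prod_{i<j}(x_i-x_j)^{2c}$ is homogeneous of degree $cn(n-1)$, $\prod_i x_i^a$ of degree $na$, and $\prod_i(1-x_i)^b$ has total degree $nb$, so $\deg\Phi=n\bigl(a+b+(n-1)c\bigr)=n(p-1)$. Hence only the top homogeneous component of $\Phi$ can contribute to the coefficient of $x_1^{p-1}\dots x_n^{p-1}$, and that component, obtained by keeping the leading term $(-x_i)^b$ of each factor $(1-x_i)^b$, is
\[
(-1)^{nb}\prod_{1\le i<j\le n}(x_i-x_j)^{2c}\prod_{i=1}^n x_i^{a+b}.
\]
Since multiplying by $\prod_i x_i^{a+b}$ shifts every exponent by $a+b=(p-1)-(n-1)c$, the sought coefficient equals $(-1)^{nb}$ times the coefficient of the balanced monomial $\prod_i x_i^{(n-1)c}$ in $\prod_{i<j}(x_i-x_j)^{2c}$. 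Thus everything reduces to evaluating this single integer coefficient, which I may then reduce modulo $p$.

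The key step is to identify that balanced coefficient with a constant term of Dyson type. Using the elementary relation $(1-x_i/x_j)(1-x_j/x_i)=-(x_i-x_j)^2/(x_ix_j)$ and the fact that each variable occurs in $n-1$ of the products $x_ix_j$, I would rewrite
\[
\prod_{i\ne j}\Bigl(1-\tfrac{x_i}{x_j}\Bigr)^{c}=(-1)^{c\binom{n}{2}}\,\frac{\prod_{1\le i<j\le n}(x_i-x_j)^{2c}}{\prod_{i=1}^n x_i^{(n-1)c}}.
\]
The Laurent constant term of the right-hand side is precisely the balanced coefficient above, whereas the constant term of the left-hand side is Dyson's constant-term identity (Good's theorem) specialized to all exponents equal to $c$, namely $\frac{(nc)!}{(c!)^n}$. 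Therefore the balanced coefficient equals $(-1)^{c\binom{n}{2}}\frac{(nc)!}{(c!)^n}$, and combining with the overall sign $(-1)^{nb}$ yields
\[
S_n(a,b,c)=(-1)^{nb+c\,n(n-1)/2}\,\frac{(nc)!}{(c!)^n},
\]
which is the claim, since $\binom{n}{2}=n(n-1)/2$.

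I expect the main obstacle to be the bookkeeping in the second paragraph: getting the sign $(-1)^{c\binom{n}{2}}$ and the monomial shift right when passing between the polynomial coefficient of $\prod_{i<j}(x_i-x_j)^{2c}$ and the Laurent constant term to which Dyson's identity applies. Once that dictionary is set up correctly, Dyson's theorem closes the argument immediately. I would also remark that the whole computation is an identity of integers reduced modulo $p$, so no divisibility subtlety arises, and the formula remains valid in the degenerate case where $(nc)!/(c!)^n\equiv 0 \pmod p$.
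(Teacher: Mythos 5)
Your proof is correct and is essentially the paper's own argument: both reduce $S_n(a,b,c)$ to $(-1)^{bn}$ times the balanced coefficient of $\prod_i x_i^{(n-1)c}$ in $\prod_{i<j}(x_i-x_j)^{2c}$ and evaluate it via Dyson's constant-term identity, picking up the sign $(-1)^{c\binom{n}{2}}$ from the dictionary between the polynomial coefficient and the Laurent constant term. Your write-up just spells out the top-degree argument (and the coefficient-to-constant-term conversion) that the paper's proof leaves implicit.
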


\begin{proof}
If $a+b+(n-1)c=p-1$, then $S_n(a,b,c)$ equals $(-1)^{bn}$ multiplied by the coefficient of $(x_1\dots x_n)^c$ in 
$\prod_{1\leq i<j\leq n} (x_i-x_j)^{2c}$, which  equals $(-1)^{cn(n-1)/2}\frac{(cn)!}{(c!)^n}$ by Dyson's formula
\bean
\label{DF}
\on{C.T.} \prod_{1\leq i<j\leq n}(1-x_i/x_j)^c (1-x_j/x_i)^c = \frac{(cn)!}{(c!)^n}\,.
\eean
Here C.T. denotes the constant term. See the formula in \cite[Section 8.8]{AAR}.
\end{proof}

\begin{lem}
\label{lem ind n2}
If $a+b+(n-1)c=p-1$, then
\bean
\label{pn ind}
\phantom{aaaa}
P_n(a,b,c) \,=\, (-1)^{bn+cn(n-1)/2}\,\frac{(cn)!}{(c!)^n}\, .
\eean

\end{lem}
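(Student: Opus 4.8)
The plan is to evaluate $P_n(a,b,c)$ directly from its definition \eqref{P2} under the hypothesis $a+b+(n-1)c=p-1$, and then to match the result against the right-hand side of \eqref{pn ind} factor by factor. First I would use the constraint to simplify each denominator: since $a+b=p-1-(n-1)c$, for every $j$ we get $a+b+(n+j-2)c+1-p=(j-1)c$, so the $j$-th denominator $(a+b+(n+j-2)c+1-p)!$ collapses to $((j-1)c)!$. Thus
\[
P_n(a,b,c)=(-1)^n\prod_{j=1}^n\frac{(jc)!}{c!\,((j-1)c)!}\,(a+(j-1)c)!\,(b+(j-1)c)!\,.
\]

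Next I would isolate the ``binomial'' part $\prod_{j=1}^n\frac{(jc)!}{c!\,((j-1)c)!}=\prod_{j=1}^n\binom{jc}{c}$. The factorials telescope, $\prod_{j=1}^n\frac{(jc)!}{((j-1)c)!}=(nc)!$, so this product equals $\frac{(cn)!}{(c!)^n}$, which is exactly the non-sign factor appearing in \eqref{pn ind}. It therefore remains to show that $(-1)^n\prod_{j=1}^n(a+(j-1)c)!\,(b+(j-1)c)!=(-1)^{bn+cn(n-1)/2}$.

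For the sign I would pair the two factorial products. Reindexing the second one by $j\mapsto n+1-j$ gives $\prod_{j=1}^n(b+(j-1)c)!=\prod_{j=1}^n(b+(n-j)c)!$, and for each $j$ the integers $a+(j-1)c$ and $b+(n-j)c$ sum to $a+b+(n-1)c=p-1$. Hence Lemma \ref{lem ca} applies termwise and yields $(a+(j-1)c)!\,(b+(n-j)c)!=(-1)^{a+(j-1)c+1}$. Multiplying over $j$ and summing the exponents $\sum_{j=1}^n\bigl(a+(j-1)c+1\bigr)=na+c\tfrac{n(n-1)}{2}+n$ produces the sign $(-1)^{na+cn(n-1)/2+n}$, so together with the leading $(-1)^n$ I obtain $(-1)^{na+cn(n-1)/2}$.

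The only nontrivial point—and the main obstacle—is reconciling this with the claimed exponent $bn+cn(n-1)/2$, i.e.\ proving $(-1)^{na}=(-1)^{nb}$. Here I would exploit that $p$ is odd, so $p-1$ is even and $a+b\equiv(n-1)c\pmod 2$; consequently $a-b\equiv a+b\equiv(n-1)c\pmod 2$, whence $n(a-b)\equiv n(n-1)c\pmod 2$. Since $n(n-1)$ is always even, $n(a-b)$ is even, giving $(-1)^{na}=(-1)^{nb}$ and completing the identification $P_n(a,b,c)=(-1)^{bn+cn(n-1)/2}\frac{(cn)!}{(c!)^n}$.
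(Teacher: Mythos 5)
Your proof is correct and takes essentially the same route as the paper: use the constraint to collapse each denominator to $((j-1)c)!$, telescope $\prod_j (jc)!/((j-1)c)!$ to get $(cn)!/(c!)^n$, and pair $(a+(j-1)c)!$ with $(b+(n-j)c)!$ via Lemma \ref{lem ca}. The only difference is cosmetic: the paper writes each pairing sign as $(-1)^{b+(n-j)c+1}$ (equivalent to $(-1)^{a+(j-1)c+1}$ because the two entries of each pair sum to the even number $p-1$, so they have the same parity), which produces the exponent $bn$ directly and makes your closing parity argument converting $(-1)^{na}$ into $(-1)^{nb}$ unnecessary.
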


\begin{proof}
We have
\bea
P_n(a,b,c) 
&=& (-1)^n\,
\prod_{j=1}^n \frac{(jc)!}{c!}\,
\frac{(a+(j-1)c)!\,(b+(j-1)c)!}
{(a+b + (n+j-2)c+1-p)!}
\\
&=& 
(-1)^n\,\prod_{j=1}^n \frac{(jc)!}{c!}\,
\frac{(a+(j-1)c)!\,(b+(j-1)c)!}
{c!\,(2c)!\dots ((n-1)c)!}\,.
\eea
By Lemma \ref{lem ga} we have  $a!\,(b+(n-1)c)! = (-1)^{b+(n-1)c+1}$,
$(a+c)!\,(b+(n-2)c)! = (-1)^{b+(n-2)c+1}$, and so on. This proves the lemma.
\end{proof}

Lemmas \ref{lem ind n} and \ref{lem ind n2} prove formula \eqref{main n} for 
$a+b+(n-1)c=p-1$.

\subsection{Aomoto recursion} 
\label{sec Aom rec}

We follow the paper \cite{Ao},  where recurrence 
relations were developed for the classical Selberg integral.
See also \cite[Section 8.2]{AAR}.

\vsk.2>

Using Lemma \ref{lem St}, for $k=1,\dots,n$ we have
\bean
\label{r1n}
0
&=&
\int_{[1,\dots,1]_p}\frac{\der}{\der x_1}\Big[(1-x_1) \prod_{i=1}^kx_i\,\Phi(x,a,b,c)\Big] dx_1\dots dx_n
\\
\notag
&=&
(a+1)\int_{[1,\dots,1]_p}(1-x_1) \prod_{i=2}^kx_i\,\Phi(x,a,b,c)\, dx_1\dots dx_n
\\
\notag
&&
- (b+1)
\int_{[1,\dots,1]_p} \prod_{i=1}^kx_i\,\Phi(x,a,b,c)\, dx_1\dots dx_n
\\
\notag
&&
+2c \int_{[1,\dots,1]_p}\sum_{j=2}^n\frac{1-x_1}{x_1-x_j} \prod_{i=1}^kx_i\,\Phi(x,a,b,c)\, dx_1\dots dx_n\,.
\eean

\begin{lem}
\label{lem Ik}
The $\F_p$-integral
\bean
\label{Ik1n}
\int_{[1,\dots,1]_p}\frac{1}{x_1-x_j}\,\prod_{i=1}^kx_i\,\Phi(x,a,b,c)\, dx_1\dots dx_n
\eean
equals $0$ if $2\leq j\leq k$ and equals $S_{k-1,n}/2$ if $k<j\leq n$.  
The $\F_p$-integral
\bean
\label{Ik2n}
&&
\int_{[1,\dots,1]_p}\frac{x_1}{x_1-x_j}\,\prod_{i=1}^kx_i\,\Phi(x,a,b,c)\, dx_1\dots dx_n
\eean
equals $S_{k,n}/2$ if $2\leq j\leq k$ and equals $S_{k,n}$ if $k<j\leq n$.  
\end{lem}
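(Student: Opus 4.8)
The plan is to derive all four evaluations from a single structural fact: the $\F_p$-integral over the cycle $[1,\dots,1]_p$ is invariant under every permutation of the variables $x_1,\dots,x_n$. This follows by iterating the transposition invariance \eqref{sym}, since here all the cycle labels are equal and so the cycle is unchanged by relabeling. Before using it, I would record that each integrand in \eqref{Ik1n} and \eqref{Ik2n} is genuinely a polynomial: the master polynomial $\Phi$ carries the factor $(x_1-x_j)^{2c}$, so dividing by $x_1-x_j$ leaves $(x_1-x_j)^{2c-1}$ times a polynomial (here $c\geq1$, which is the only case in which the $2c$-term of the Aomoto recursion \eqref{r1n} contributes at all), and the $\F_p$-integral is therefore well defined.

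For the first integral \eqref{Ik1n} with $2\leq j\leq k$, I observe that $\prod_{i=1}^k x_i\,\Phi$ is symmetric under the transposition $x_1\leftrightarrow x_j$ (both variables occur in the product, and $\Phi$ is fully symmetric), while $1/(x_1-x_j)$ is antisymmetric. Hence the whole integrand is antisymmetric under $x_1\leftrightarrow x_j$, and applying the permutation invariance \eqref{sym} to this transposition shows the integral equals its own negative; it is therefore $0$, since $p$ is odd.

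The remaining three cases all follow by the same symmetrization trick: I add to the integral its image under the swap $x_1\leftrightarrow x_j$ and use a telescoping identity in $x_1-x_j$. For \eqref{Ik2n} with $2\leq j\leq k$, the swap sends $\frac{x_1}{x_1-x_j}$ to $-\frac{x_j}{x_1-x_j}$ while fixing $\prod_{i=1}^k x_i\,\Phi$, so if $J$ denotes the integral then $2J=\int_{[1,\dots,1]_p}\frac{x_1-x_j}{x_1-x_j}\prod_{i=1}^k x_i\,\Phi=\int_{[1,\dots,1]_p}\prod_{i=1}^k x_i\,\Phi=S_{k,n}$, giving $J=S_{k,n}/2$. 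For \eqref{Ik1n} with $k<j\leq n$, the variable $x_j$ no longer appears in the product; writing the integrand as $\frac{x_1}{x_1-x_j}\prod_{i=2}^k x_i\,\Phi$ and applying the swap produces $-\frac{x_j}{x_1-x_j}\prod_{i=2}^k x_i\,\Phi$, so $2I=\int_{[1,\dots,1]_p}\prod_{i=2}^k x_i\,\Phi=S_{k-1,n}$ after relabeling the $k-1$ marked variables, whence $I=S_{k-1,n}/2$. Finally, for \eqref{Ik2n} with $k<j\leq n$, the integrand is $\frac{x_1^2}{x_1-x_j}\prod_{i=2}^k x_i\,\Phi$, and the swap carries it to $-\frac{x_j^2}{x_1-x_j}\prod_{i=2}^k x_i\,\Phi$; using $\frac{x_1^2-x_j^2}{x_1-x_j}=x_1+x_j$ gives $2J=\int_{[1,\dots,1]_p}(x_1+x_j)\prod_{i=2}^k x_i\,\Phi$, and since both $x_1\prod_{i=2}^k x_i$ and $x_j\prod_{i=2}^k x_i$ are products of $k$ distinct marked variables, each integral equals $S_{k,n}$ by permutation invariance, so $2J=2S_{k,n}$ and $J=S_{k,n}$.

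The only genuine obstacle is the bookkeeping of which of $x_1,x_j$ lie in the product $\prod_{i=1}^k x_i$, since this is precisely what determines whether the swap produces an antisymmetric integrand (case one, giving $0$) or a symmetric combination whose numerator telescopes through $x_1-x_j$. Once the integrands are recognized as polynomials and the permutation invariance \eqref{sym} is in hand, each of the four cases reduces to one algebraic identity followed by a relabeling, with the division by $2$ legitimate because $p$ is odd.
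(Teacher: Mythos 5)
Your proof is correct and follows essentially the same route as the paper: both exploit the invariance of the $\F_p$-integral under the swap $x_1\leftrightarrow x_j$ (Lemma \ref{lem sym}, equation \eqref{sym}) and then reduce each of the four cases to one of the algebraic identities $\tfrac{x_1x_j}{x_1-x_j}+\tfrac{x_1x_j}{x_j-x_1}=0$, $\tfrac{x_1}{x_1-x_j}+\tfrac{x_j}{x_j-x_1}=1$, $\tfrac{x_1^2x_j}{x_1-x_j}+\tfrac{x_1x_j^2}{x_j-x_1}=x_1x_j$, $\tfrac{x_1^2}{x_1-x_j}+\tfrac{x_j^2}{x_j-x_1}=x_1+x_j$. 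Your added remarks on the integrand being a genuine polynomial when $c\geq 1$ and on the invertibility of $2$ in $\F_p$ are sensible refinements the paper leaves implicit, but they do not change the argument.
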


\begin{proof}
By Lemma \ref{lem sym} each of these integrals does not change if $x_1,x_j$ are permuted.
The four statements of the lemma hold since $\frac{x_1x_j}{x_1-x_j}+\frac{x_1x_j}{x_j-x_1}=0$,
$\frac{x_1}{x_1-x_j}+\frac{x_j}{x_j-x_1}=1$,
$\frac{x_1^2x_j}{x_1-x_j}+\frac{x_1x_j^2}{x_j-x_1}=x_1x_j$,
$\frac{x_1^2}{x_1-x_j}+\frac{x_j^2}{x_j-x_1}=x_1+x_j$, respectively.
\end{proof}

\begin{lem}
\label{lem Reln}
For  $k=1,\dots,n$ we have
\bean
\label{re1n}
S_{k,n}
\,=\,
 \frac{a+(n-k)c+1}{a+b+(2n-k-1)c+2}\,S_{k-1,n}\,.
 \eean
\end{lem}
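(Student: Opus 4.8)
The plan is to derive the recursion relation \eqref{re1n} by combining the integration-by-parts identity \eqref{r1n} with the evaluation of the auxiliary integrals in Lemma \ref{lem Ik}. First I would take the vanishing relation \eqref{r1n}, which expresses $0$ as a linear combination of three $\F_p$-integrals coming from differentiating $(1-x_1)\prod_{i=1}^k x_i\,\Phi$ in $x_1$. The first term, $(a+1)\int(1-x_1)\prod_{i=2}^k x_i\,\Phi$, splits into two pieces: the piece without the extra $x_1$ factor is $(a+1)\,S_{k-1,n}$, and the piece with $x_1$ restored is $-(a+1)\,S_{k,n}$. The second term is simply $-(b+1)\,S_{k,n}$. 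So before handling the sum over $j$, the identity reads
\beq
0 = (a+1)\,S_{k-1,n} - (a+1)\,S_{k,n} - (b+1)\,S_{k,n} + 2c\,T,
\eeq
where $T=\int\sum_{j=2}^n\frac{1-x_1}{x_1-x_j}\prod_{i=1}^k x_i\,\Phi\,dx_1\dots dx_n$.

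Next I would evaluate $T$ using Lemma \ref{lem Ik}. Writing $\frac{1-x_1}{x_1-x_j}=\frac{1}{x_1-x_j}-\frac{x_1}{x_1-x_j}$, each summand over $j$ is a difference of the two integrals analyzed in that lemma. I must split the sum $\sum_{j=2}^n$ according to whether $2\le j\le k$ (there are $k-1$ such indices) or $k<j\le n$ (there are $n-k$ such indices). For $2\le j\le k$ the Lemma gives $0$ for the $\frac{1}{x_1-x_j}$ integral and $S_{k,n}/2$ for the $\frac{x_1}{x_1-x_j}$ integral, contributing $(k-1)\bigl(0-\tfrac12 S_{k,n}\bigr)$. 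For $k<j\le n$ the Lemma gives $S_{k-1,n}/2$ and $S_{k,n}$ respectively, contributing $(n-k)\bigl(\tfrac12 S_{k-1,n}-S_{k,n}\bigr)$. Collecting,
\beq
2c\,T = 2c\Bigl[\tfrac{n-k}{2}\,S_{k-1,n} - \bigl(\tfrac{k-1}{2}+(n-k)\bigr)S_{k,n}\Bigr]
= c(n-k)\,S_{k-1,n} - c(2n-k-1)\,S_{k,n}.
\eeq

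Substituting this back into the displayed identity and grouping the $S_{k-1,n}$ and $S_{k,n}$ coefficients, I would obtain
\beq
0 = \bigl(a+1+c(n-k)\bigr)S_{k-1,n} - \bigl(a+b+2+c(2n-k-1)\bigr)S_{k,n},
\eeq
which rearranges directly into \eqref{re1n}. The one point requiring a little care is the bookkeeping of how many indices $j$ fall into each of the two ranges and the factors of $\tfrac12$ that the Lemma produces; these $\tfrac12$'s are meaningful in $\F_p$ because $p$ is odd, so $2$ is invertible, and the $2c$ prefactor cancels them cleanly. I expect no serious obstacle: the only subtlety is ensuring that Lemma \ref{lem sym} (symmetry under permuting $x_1$ and $x_j$) genuinely applies, which requires the hypothesis $a+b+(2n-2)c<2p-1$ so that the $\F_p$-integrals are well-behaved under the substitution—this is exactly condition \eqref{abc ine}, guaranteed in the regime where the recursion will be used.
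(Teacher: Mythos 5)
Your proof is correct and follows exactly the paper's own route: it expands the integration-by-parts identity \eqref{r1n}, evaluates the cross terms with Lemma \ref{lem Ik} by counting the indices $j$ in the two ranges $2\le j\le k$ and $k<j\le n$, and rearranges to get \eqref{re1n}; your intermediate identity $0=(a+1+c(n-k))S_{k-1,n}-(a+b+2+c(2n-k-1))S_{k,n}$ is precisely the one displayed in the paper. One minor quibble: the permutation symmetry invoked in Lemma \ref{lem Ik} is the unconditional variable-exchange property \eqref{sym} of the $\F_p$-integral (all cycle indices equal $1$), so no inequality such as $a+b+(2n-2)c<2p-1$ is actually needed for the recursion itself, consistent with the paper stating Lemma \ref{lem Reln} without hypotheses.
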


\begin{proof}
Using Lemma \ref{lem Ik} we rewrite \eqref{r1n} as
\bea
0 = (a+1)S_{k-1,n} - (a+b+2)S_{k,n} + c(n-k)S_{k-1,n} -
c(2n-k-1)S_{k,n}\,.
\eea
\end{proof}

\subsection{Proof of Theorem \ref{thm nd}} 
\label{sec ptnd}

Theorem \ref{thm nd} is proved by induction on $a$ and $b$. The base induction step
$a+b+(n-1)c=p-1$ is proved in Section \ref{sec indn}.

Lemma \ref{lem Reln} gives
\bea
S_n(a+1,b,c)\, =\, \,S_n(a,b,c)\, 
\prod_{j=1}^n \frac{a+(n-j)c+1}{a+b+(2n-j-1)c+1}\,
\,.
\eea
Together with the symmetry $S_n(a,b,c)=S_n(b,a,c)$ this gives formula
\eqref{main n}. Then formula \eqref{re1n} gives formula \eqref{mainIn}.
Theorem \ref{thm nd} is proved.

\subsection{Relation to Jacobi polynomials}
The statements \eqref{mainIn} for different values of $k$ can be organized to just one equality which involves a Jacobi polynomial
--
like it was done by K.\,Aomoto in \cite{Ao} for the classical Selberg integral. Recall that the degree $n$ Jacobi polynomial is
\[
P^{(n)}_{\alpha,\beta}(x)=\frac{1}{n!}\sum_{\nu=0}^n \binom{n}{\nu} \prod_{i=1}^\nu(n+\alpha+\beta+i)\prod_{i=\nu+1}^n(\alpha+i) \left(\frac{x-1}{2}\right)^\nu.
\]

\begin{prop} Assuming inequalities \eqref{abc ine} let $\alpha=(a+1)/c-1$, $\beta=(b+1)/c-1$. Then
\begin{equation}\label{jacobi}
\int_{[1,\ldots,1]_p} \prod_{i=1}^n (x_i-t) \cdot \Phi(x,a,b,c) dx_1\ldots dx_n
=
\frac{n!c^n \cdot S_n(a,b,c)}{\prod_{i=n-1}^{2n-2} (a+b+ic+2)}
\cdot
P^{(\alpha,\beta)}_n(1-2t).
\end{equation}
\end{prop}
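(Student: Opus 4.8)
The plan is to note that both sides of \eqref{jacobi} are polynomials in $t$ of degree at most $n$ — the left side because the $\F_p$-integral extracts the coefficient of $x_1^{p-1}\cdots x_n^{p-1}$, producing a polynomial in the parameter $t$ — so it suffices to compare the coefficient of each power of $t$. First I would expand $\prod_{i=1}^n(x_i-t)=\sum_{k=0}^n(-1)^{n-k}e_k(x)\,t^{n-k}$, where $e_k$ is the $k$-th elementary symmetric polynomial. Since $\Phi(x,a,b,c)$ is symmetric and the $\F_p$-integral is invariant under permuting the $x_i$, each of the $\binom nk$ monomials of $e_k$ integrates to the same value $S_{k,n}(a,b,c)$ (the permutation statement recorded after \eqref{sym}), giving
\[
\int_{[1,\ldots,1]_p}\prod_{i=1}^n(x_i-t)\,\Phi(x,a,b,c)\,dx_1\ldots dx_n=\sum_{k=0}^n(-1)^{n-k}\binom nk S_{k,n}(a,b,c)\,t^{n-k}.
\]
Thus the coefficient of $t^\nu$ on the left (with $k=n-\nu$) is $(-1)^\nu\binom n\nu S_{n-\nu,n}$.

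Next I would express $S_{k,n}$ through $S_n=S_{0,n}$ by iterating the Aomoto recursion \eqref{re1n}, obtaining
\[
\frac{S_{k,n}}{S_n}=\prod_{m=1}^k\frac{a+(n-m)c+1}{a+b+(2n-m-1)c+2}=\frac{\prod_{j=n-k}^{n-1}(a+jc+1)}{\prod_{l=2n-k-1}^{2n-2}(a+b+lc+2)},
\]
the second equality being the reindexing $j=n-m$, $l=2n-m-1$. On the right of \eqref{jacobi} I would substitute $x=1-2t$ into the stated formula for $P^{(\alpha,\beta)}_n$, so that $\bigl(\tfrac{x-1}2\bigr)^\nu=(-t)^\nu$, and then insert $\alpha=(a+1)/c-1$, $\beta=(b+1)/c-1$. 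Writing $\alpha+i=(a+1+(i-1)c)/c$ and $n+\alpha+\beta+i=((n-2+i)c+a+b+2)/c$ pulls a factor $c^{-n}$ out of the $\nu$-th term, which cancels the explicit $c^n$ in \eqref{jacobi}; the $n!$ cancels likewise, and the resulting $(-1)^\nu\binom n\nu$ matches the left-hand coefficient.

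After these cancellations, matching the coefficient of $t^\nu$ reduces \eqref{jacobi} to the single identity
\[
\frac{S_{n-\nu,n}}{S_n}=\frac{\prod_{i=1}^\nu\bigl((n-2+i)c+a+b+2\bigr)\,\prod_{i=\nu+1}^n\bigl(a+1+(i-1)c\bigr)}{\prod_{i=n-1}^{2n-2}(a+b+ic+2)}.
\]
The main work — and the only real obstacle — is then the bookkeeping of products: reindex the two numerator factors as $\prod_{i=1}^\nu=\prod_{l=n-1}^{n+\nu-2}(a+b+lc+2)$ and $\prod_{i=\nu+1}^n=\prod_{j=\nu}^{n-1}(a+jc+1)$, cancel the overlapping range $l=n-1,\dots,n+\nu-2$ against the denominator $\prod_{i=n-1}^{2n-2}$, and verify that the remaining quotient $\prod_{j=\nu}^{n-1}(a+jc+1)\big/\prod_{i=n+\nu-1}^{2n-2}(a+b+ic+2)$ is exactly the closed form of $S_{n-\nu,n}/S_n$ found above (take $k=n-\nu$ there).

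Finally I would record that all the divisions are legitimate in $\F_p$. Inequalities \eqref{abc ine} force every factor $a+b+ic+2$ with $n-1\le i\le 2n-2$ to satisfy $p<a+b+ic+2<2p$, hence to be invertible, which validates both the iterated recursion and the denominator in \eqref{jacobi}; the same inequalities give $0<c<p$ (for $n\ge2$, from $2c\le(2n-2)c\le a+b+(2n-2)c<2p-2$), so $\alpha$ and $\beta$ are well defined.
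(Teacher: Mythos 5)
Your proof is correct and follows essentially the same route as the paper's: expand $\prod_{i=1}^n(x_i-t)$, use the symmetry of the $\F_p$-integral to reduce each term to $S_{k,n}(a,b,c)$, and substitute the closed form of $S_{k,n}/S_n$ — which the paper quotes from \eqref{main n} and \eqref{mainIn}, while you re-derive it by iterating the Aomoto recursion \eqref{re1n}, the same identity. The coefficient-matching, product reindexing, and $\F_p$-invertibility checks you supply are precisely the details the paper compresses into the phrase ``yields \eqref{jacobi}''.
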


The proof is the same is in \cite{Ao}: After expanding $\prod_{i=1}^n (x_i-t)$ we have the sum of integrals of the type
\[
\int_{[1,\ldots,1]_p} x_{\sigma_1}x_{\sigma_2}\ldots x_{\sigma_k} \Phi(x,a,b,c) dx_1\ldots dx_n,
\]
which --- by symmetry \eqref{sym} --- are equal to $S_{k,n}(a,b,c)$. Substituting 
\[
S_{k,n}(a,b,c)= S_n(a,b,c)\cdot \prod_{j=1}^k\frac{a+(n-j)c+1}{a+b+(2n-j-1)c+2}\]
from \eqref{main n} and  \eqref{mainIn} yields \eqref{jacobi}.

\section{$\F_p$-Selberg integral from Morris' identity}
\label{sec 5}

\subsection{Morris' identity}

In this section we work out the integral formula \eqref{main n} for the $\F_p$-Selberg integral from
Morris' identity. 
Suppose that $\al,\beta,\ga$ are nonnegative integers. Then
\bean
\label{Mid}
&&
\on{C.T.}\,\prod_{i=1}^n (1-x_i)^\al (1-1/x_i)^\beta \prod_{1\leq j\ne k\leq n} (1-x_j/x_k)^\ga
\\
\notag
&&
\phantom{a}
=\, \ \prod_{j=1}^n \frac{(j\ga)!}{\ga!}\,
\frac{(\al+\beta +(j-1)\ga)!}
{(\al+(j-1)\ga)!\,(\beta +(j-1)\ga)!}\,.
\eean
 Morris identity was deduced in \cite{Mo} from the integral formula \eqref{cSn}
for the classical Selberg integral, see \cite[Section 8.8]{AAR}.

The left-hand side of  \eqref{Mid} can be written as 
\bean
\label{Mim}
\on{C.T.}\,(-1)^{\binom{n}{2}\ga + n\beta}
\prod_{1\leq i<j\leq n}(x_i-x_j)^{2\ga}
\prod_{i=1}^n x_i^{-\beta-(n-1)\ga}(1-x_i)^{\al+\beta}\,,
\eean
while
\bean
\label{SnM}
S_n(a,b,c) \,=\,
\on{C.T.}\,
\prod_{1\leq i<j\leq n}(x_i-x_j)^{2c}
\prod_{i=1}^n x_i^{a+1-p}(1-x_i)^{b}\,,
\eean
where the constant term is projected to $\F_p$.

Putting $a +1-p = -\beta-(n-1)\ga, \,  b=\al+\beta\,,  c= \ga$,
or
\bean
\label{al a}
\al= a+b+(n-1)c+1-p,\qquad \beta = p-a-(n-1)c-1,\qquad \ga= c. 
\eean
we obtain the following theorem.

\begin{thm}
\label{thm SM} 
If the nonnegative integers $a,b,c$ satisfy the inequalities
\bean
\label{Mineq}
p-1\leq a+b+(n-1)c,\qquad a+(n-1)c\leq p-1,
\eean
 then the $\F_p$-Selberg integral is given by the formula:
\bean
\label{SnMM}
&&
S_n(a,b,c) \,=\, (-1)^{\binom{n}{2}c + na}
 \\
 \notag
&&
\phantom{aaa}
\times\,
 \prod_{j=1}^n \frac{(jc)!}{c!}\,
\frac{(b +(j-1)c)!}
{(p-a-(n-j)c-1)!\,(a+b  +(n+j-2)c+1-p)!}\,,
\eean
where the integer on the right-hand side is projected
to $\F_p$.
\end{thm}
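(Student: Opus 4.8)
The plan is to obtain Theorem \ref{thm SM} as a direct consequence of Morris' identity \eqref{Mid} once both sides are expressed in terms of the same constant term. The backbone is already laid out in \eqref{Mim}, \eqref{SnM} and the substitution \eqref{al a}; what remains is to assemble these, track the sign, and justify the passage to $\F_p$. First I would confirm the constant-term presentation \eqref{SnM}: by definition $S_n(a,b,c)$ is the coefficient of $x_1^{p-1}\dots x_n^{p-1}$ in the master polynomial $\Phi(x,a,b,c)$, and extracting this coefficient is the same as multiplying by $(x_1\dots x_n)^{1-p}$ and taking the constant term of the resulting Laurent polynomial, which is exactly the effect of shifting the exponent $a\mapsto a+1-p$. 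Since $\Phi$ has integer coefficients, this constant term is a genuine integer, so its reduction mod $p$ is well defined and commutes with the (linear) coefficient extraction.

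Next I would record the elementary Laurent-polynomial identities $1-1/x_i=-(1-x_i)/x_i$ and $(1-x_j/x_k)(1-x_k/x_j)=-(x_j-x_k)^2/(x_jx_k)$, and use them to rewrite the left-hand side of \eqref{Mid} in the form \eqref{Mim}. Here each variable $x_i$ occurs in exactly $n-1$ of the $\binom{n}{2}$ paired cross factors, which produces the exponent $-(n-1)\ga$ on $x_i$, while the accumulated signs are $(-1)^{n\beta}$ from the $n$ factors $(1-1/x_i)^\beta$ and $(-1)^{\binom{n}{2}\ga}$ from the pairing, giving the overall prefactor $(-1)^{n\beta+\binom{n}{2}\ga}$.

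I would then impose the substitution \eqref{al a}. The key point is that it forces $\al+\beta=b$, $-\beta-(n-1)c=a+1-p$, and $\ga=c$, so that the integrand of \eqref{Mim} coincides term by term with that of \eqref{SnM}; moreover the nonnegativity conditions $\al\ge 0$ and $\beta\ge 0$ required to apply Morris' identity translate precisely into the two inequalities $p-1\le a+b+(n-1)c$ and $a+(n-1)c\le p-1$ of \eqref{Mineq}. Taking constant terms of both sides as an identity of integers and then reducing mod $p$ gives $S_n(a,b,c)=(-1)^{n\beta+\binom{n}{2}c}$ times the Morris right-hand side evaluated at these parameters. Two short computations close the argument. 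For the sign I would write $n\beta=n(p-1)-na-n(n-1)c$ and use that $p$ is odd (so $p-1$ is even) together with the fact that $n(n-1)$ is always even to collapse $(-1)^{n\beta+\binom{n}{2}c}$ to $(-1)^{na+\binom{n}{2}c}$, matching the stated prefactor. For the factorials, substitution sends $(\al+\beta+(j-1)\ga)!\mapsto (b+(j-1)c)!$, $(\al+(j-1)\ga)!\mapsto (a+b+(n+j-2)c+1-p)!$, and $(\beta+(j-1)\ga)!\mapsto (p-a-(n-j)c-1)!$, reproducing \eqref{SnMM} verbatim.

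The step I expect to require the most care is the bookkeeping of the mod-$p$ projection rather than any hard estimate: one must apply \eqref{Mid} as an identity of \emph{integers}, its left-hand side being an honest constant term of an integer Laurent polynomial, and only afterwards reduce to $\F_p$, since the individual factorials on the right of \eqref{SnMM} may separately be divisible by $p$ and hence vanish mod $p$. Making explicit that the coefficient-extraction defining the $\F_p$-integral is $\F_p$-linear, and therefore interchanges with reduction of the integer-valued Morris product, is what legitimizes reading the final answer as the projection of the integer on the right-hand side.
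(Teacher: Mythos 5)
Your proposal is correct and follows essentially the same route as the paper: rewriting Morris' identity \eqref{Mid} in the form \eqref{Mim}, expressing $S_n(a,b,c)$ as the constant term \eqref{SnM}, and matching parameters via the substitution \eqref{al a}, with the inequalities \eqref{Mineq} arising as the nonnegativity of $\al$ and $\beta$. The paper states this very tersely, and your added bookkeeping --- the sign collapse $(-1)^{n\beta+\binom{n}{2}c}=(-1)^{na+\binom{n}{2}c}$ using that $p-1$ and $n(n-1)$ are even, and the insistence that Morris' identity be applied over $\Z$ before projecting to $\F_p$ --- correctly fills in exactly the details the paper leaves implicit.
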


\begin{lem}
\label{lem S=M} If both inequalities \eqref{abc n eq} and \eqref{Mineq} hold,
that is, if 
\bean
\label{abc ines}
&
 p-1\leq a+b+(n-1)c,
\qquad a+b+(2n-2)c< 2p-1\ ,
\\
\label{abs ine2}
&a+(n-1)c\leq p-1,
\eean
 then 
in $\F_p$ we have
\bean
\label{m=s}
&&
(-1)^{\binom{n}{2}c + na}
 \prod_{j=1}^n \frac{(jc)!}{c!}\,
\frac{(b +(j-1)c)!}
{(p-a-(n-j)c-1)!\,(a+b  +(n+j-2)c)!}\,,
\\
\notag
&&
\phantom{aa}
=\,(-1)^n\,\prod_{j=1}^n \frac{(jc)!}{c!}\,
\frac{(a+(j-1)c)!\,(b+(j-1)c)!}
{(a+b + (n+j-2)c+1-p)!}\,,
\eean
and hence  \eqref{SnMM} 
\bean
\label{main nnn}
S_n(a,b,c)=
(-1)^n\,\prod_{j=1}^n \frac{(jc)!}{c!}\,
\frac{(a+(j-1)c)!\,(b+(j-1)c)!}
{(a+b + (n+j-2)c+1-p)!}\,.
\eean

\end{lem}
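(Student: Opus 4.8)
The plan is to reduce the claimed equality \eqref{m=s} to a single statement about signs. The right-hand side of \eqref{SnMM} and the right-hand side of \eqref{main nnn} share the common factor $\prod_{j=1}^n \frac{(jc)!}{c!}\,(b+(j-1)c)!\big/(a+b+(n+j-2)c+1-p)!$; the hypotheses \eqref{abc ines} guarantee that each argument $a+b+(n+j-2)c+1-p$ lies in $[0,p-1]$, so these denominators are nonzero in $\F_p$. Hence if one of the numerator factorials $(jc)!$ or $(b+(j-1)c)!$ vanishes modulo $p$, then both sides of \eqref{m=s} are $0$ and there is nothing to prove, while otherwise the common factor may be divided out. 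In either case it suffices to establish the sign identity
\[
\prod_{j=1}^n (a+(j-1)c)!\,(p-a-(n-j)c-1)! \,=\, (-1)^{\,na+\binom{n}{2}c+n}\,,
\]
since combining it with the surviving prefactors $(-1)^{\binom{n}{2}c+na}$ and $(-1)^n$ and using $(-1)^{2n}=1$ reproduces exactly the equality of the two sides of \eqref{m=s}.

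Before evaluating this product I would check, using \eqref{abs ine2}, that every factorial appearing is a well-defined element of $\F_p$: for $1\le j\le n$ one has $0\le a+(j-1)c\le a+(n-1)c\le p-1$, and consequently $0\le p-a-(n-j)c-1\le p-1$ as well. Thus all arguments lie in the range $[0,p-1]$ in which Lemma \ref{lem ca} is applicable.

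The key move is a symmetric reindexing. Replacing $j$ by $n+1-j$ in the factor $\prod_{j=1}^n (p-a-(n-j)c-1)!$ turns it into $\prod_{j=1}^n (p-a-(j-1)c-1)!$, so the whole product regroups termwise as $\prod_{j=1}^n (a+(j-1)c)!\,(p-a-(j-1)c-1)!$. For each $j$ the two arguments sum to $p-1$, so Lemma \ref{lem ca} gives $(a+(j-1)c)!\,(p-a-(j-1)c-1)!=(-1)^{\,a+(j-1)c+1}$. Multiplying over $j$ collapses the product to $(-1)^{E}$ with $E=\sum_{j=1}^n\bigl(a+(j-1)c+1\bigr)=na+c\binom{n}{2}+n$, which is precisely the claimed sign identity. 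Substituting this back into the reduction of the first paragraph proves \eqref{m=s}, and therefore \eqref{SnMM} collapses to \eqref{main nnn}.

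I expect no serious obstacle here: the entire content is the pairing $j\leftrightarrow n+1-j$ together with Lemma \ref{lem ca}. The only points demanding care are the exponent bookkeeping in $E$ and the verification that the hypotheses \eqref{abc ines}--\eqref{abs ine2} keep every factorial argument inside $[0,p-1]$, which is exactly what makes both the cancellation of the common factors and the application of Lemma \ref{lem ca} legitimate.
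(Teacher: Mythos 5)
Your proof is correct and is essentially the paper's argument: both hinge on Lemma \ref{lem ca} applied to the pairs $a+(j-1)c$ and $p-a-(j-1)c-1$ summing to $p-1$, the reindexing $j\leftrightarrow n+1-j$, and the same sign count $(-1)^{na+\binom{n}{2}c+n}$. The only organizational difference is that the paper rewrites each factor $1/(p-a-(n-j)c-1)!$ in place as $(-1)^{a+(n-j)c+1}(a+(n-j)c)!$ (multiplying and dividing by $(a+(n-j)c)!$, nonzero by \eqref{abs ine2}), so it never needs to cancel a common factor across the equation and hence avoids your case analysis about vanishing numerator factorials.
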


Notice that by Lemma \ref{lem S=0} we have $S_n(a,b,c)=0$ if inequality \eqref{abs ine2}
does not hold.

\begin{proof} We have
\bea
\prod_{j=1}^n 
\frac{1}
{(p-a-(n-j)c-1)!}
&=&
\prod_{j=1}^n 
\frac{(a+(n-j)c)!}
{(p-a-(n-j)c-1)!\,(a+(n-j)c)!}
\\
&=&
\prod_{j=1}^n 
(-1)^{a+(n-j)c+1}(a+(n-j)c)!\,,
\eea
by Lemma \ref{lem ca}. This implies the Lemma \ref{lem S=M}.
\end{proof}

\subsection{More on values of $S_n(a,b,c)$}

\begin{thm}
\label{thm SMnb}
If inequalities \eqref{Mineq} hold and $a=p-1-(n-1)c-k$, then
\bean
\label{SMnb}
&&
S_n(p-1-(n-1)c-k,b,c) \,=\, (-1)^{\binom{n}{2}c + na} \,\frac{(nc)!}{(c!)^n}\,
 \prod_{j=1}^n 
\frac{\binom{b+(j-1)c}{k}}{\binom{(j-1)c+k}{k}}\,,
\eean
where the integer in the right-hand side is projected
to $\F_p$.
\qed
\end{thm}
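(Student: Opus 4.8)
The plan is to obtain \eqref{SMnb} directly from Theorem~\ref{thm SM} by specializing $a=p-1-(n-1)c-k$ and repackaging the resulting factorials as binomial coefficients. First I would record what the hypotheses give: under $a=p-1-(n-1)c-k$ the second inequality in \eqref{Mineq} becomes $k\geq 0$ and the first becomes $b\geq k$, while the standing assumption $a\geq 0$ yields $(n-1)c+k\leq p-1$. In particular $(j-1)c+k\leq (n-1)c+k\leq p-1<p$ for every $j=1,\dots,n$, so each $\binom{(j-1)c+k}{k}$ is a quotient of factorials of arguments all below $p$ and is therefore invertible in $\F_p$; this is what makes the right-hand side of \eqref{SMnb} legitimate.

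Next I would substitute $a=p-1-(n-1)c-k$ into the two $p$-shifted factorial arguments of \eqref{SnMM}. A short computation gives
\[
p-a-(n-j)c-1=(j-1)c+k,\qquad a+b+(n+j-2)c+1-p=b+(j-1)c-k,
\]
so Theorem~\ref{thm SM} reads
\[
S_n(a,b,c)=(-1)^{\binom n2 c+na}\prod_{j=1}^n\frac{(jc)!}{c!}\,\frac{(b+(j-1)c)!}{((j-1)c+k)!\,(b+(j-1)c-k)!}.
\]
I would then expand the target binomial ratio as
\[
\frac{\binom{b+(j-1)c}{k}}{\binom{(j-1)c+k}{k}}=\frac{(b+(j-1)c)!\,((j-1)c)!}{(b+(j-1)c-k)!\,((j-1)c+k)!},
\]
and observe that the sign $(-1)^{\binom n2 c+na}$ is the same in \eqref{SMnb} and in the displayed form of Theorem~\ref{thm SM}. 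After cancelling the common factors $(b+(j-1)c)!$, $((j-1)c+k)!$, and $(b+(j-1)c-k)!$, matching \eqref{SMnb} against Theorem~\ref{thm SM} reduces to the single combinatorial identity
\[
\prod_{j=1}^n\frac{(jc)!}{c!}=\frac{(nc)!}{(c!)^n}\prod_{j=1}^n((j-1)c)!.
\]
This is immediate, since $\prod_{j=1}^n(jc)!=(nc)!\prod_{j=1}^{n-1}(jc)!$ and $\prod_{j=1}^n((j-1)c)!=\prod_{j=1}^{n-1}(jc)!$ (the $j=1$ term being $0!=1$), so both sides equal $(c!)^{-n}(nc)!\prod_{j=1}^{n-1}(jc)!$.

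All of these manipulations are identities of rational numbers, so they hold before reduction and survive projection to $\F_p$; thus the integer on the right-hand side of \eqref{SMnb} equals the integer on the right-hand side of \eqref{SnMM}, and the claim follows from Theorem~\ref{thm SM}. I do not expect a genuine obstacle: the statement is essentially Theorem~\ref{thm SM} re-parametrized to expose its binomial structure. The only points demanding care are the prefactor bookkeeping in the last displayed identity and the verification that the hypotheses keep every binomial denominator nonzero modulo $p$, so that the ratio form of \eqref{SMnb} is well defined.
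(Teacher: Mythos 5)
Your proposal is correct and follows essentially the same route as the paper: both proofs specialize Theorem \ref{thm SM} at $a=p-1-(n-1)c-k$ (so that $p-a-(n-j)c-1=(j-1)c+k$ and $a+b+(n+j-2)c+1-p=b+(j-1)c-k$) and then regroup the factorials into the binomial ratio, with the prefactor $(nc)!/(c!)^n$ emerging from the telescoping identity $\prod_{j=1}^n (jc)!\big/\prod_{j=1}^n((j-1)c)!=(nc)!$. Your added check that each $\binom{(j-1)c+k}{k}$ is invertible in $\F_p$ matches the remark the paper makes immediately after the theorem statement.
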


Notice that the  projections to $\F_p$ of the binomial coefficients $\binom{b+(j-1)c}{k}$
 can be calculated by Lucas's Theorem and both integers in the binomial coefficients
$\binom{(j-1)c+k}{k}$ are nonnegative and less than $p$.
\begin{proof}

We have
\bea
&&
\frac{(\al+\beta +(j-1)\ga)!}
{(\al+(j-1)\ga)!\,(\beta +(j-1)\ga)!}
=\binom{\al+\beta +(j-1)\ga}
{\beta}
\prod_{i=1}^{(j-1)\ga} \frac1{\beta + i}\,
\\
&&
\phantom{aaa}
=\,
\binom{b +(j-1)c}
{p-a-(n-1)c-1}
\prod_{i=1}^{(j-1)c} \frac1{p-a-(n-1)c-1 + i}\,.
\eea
If $a=p-1-(n-1)c-k$, then this equals
\bea
\binom{b +(j-1)c}
{k}
\prod_{i=1}^{(j-1)c} \frac1{k + i}
&=&
\binom{b +(j-1)c}
{k}
 \frac{k!}{((j-1)c)!\,\prod_{i=1}^k ((j-1)c+i)}\,
\\
&
=&
\frac 1{((j-1)c)!}\frac{\binom{b +(j-1)c}
{k}}{\binom{(j-1)c+k}{k}}\,.
\eea
Substituting this to \eqref{SnMM} we obtain \eqref{SMnb}.
\end{proof}

\begin{example}
Formula \eqref{SMnb} gives
\bea
S_2(p-c-1,b,c) = (-1)^c \binom{2c}{c}\,,
\qquad
S_2(p-c-2,b,c) = (-1)^c \binom{2c}{c}\,\frac{b(b+c)}{c+1}\,,
\eea
and so on. Notice that these values are not given by Theorem \ref{thm nd}. See more examples in Figure 1.
\end{example}

\usetikzlibrary{calc}

\begin{figure}
\begin{tikzpicture}[xscale=.45, yscale=.45]

\coordinate (a) at (0,0);

\path [fill=yellow] ($(a)+(10,12.5)$) -- ($(a)+(-.5,2)$)  -- ($(a)+(-.5,-.5)$)  -- 
($(a)+(.5,-.3)$)  -- ($(a)+(1.5,-.7)$)  -- ($(a)+(2.5,-.3)$)  -- ($(a)+(3.5,-.7)$)  -- ($(a)+(4.5,-.3)$)  -- ($(a)+(5.5,-.7)$)  -- ($(a)+(6.5,-.3)$)   -- ($(a)+(7.5,-.7)$)   -- 
($(a)+(8,-.5)$) -- ($(a)+(21,12.5)$) -- ($(a)+(10,12.5)$) ;
\path [fill=lightgray] ($(a)+(21,12.5)$) -- ($(a)+(10,1.5)$)  -- ($(a)+(25.5,1.5)$) 
-- ($(a)+(25.7,2)$) -- ($(a)+(25.3,2.5)$) -- ($(a)+(25.7,3)$) --   ($(a)+(25.3,3.5)$) -- ($(a)+(25.7,4)$) -- ($(a)+(25.3,4.5)$) -- ($(a)+(25.7,5)$) -- ($(a)+(25.3,5.5)$) --
($(a)+(25.5,6)$) --  ($(a)+(21.5,2)$) -- ($(a)+(21.5,12.5)$) ;
\draw [ultra thick, dashed] ($(a)+(25.5, 12.5)$) -- ($(a)+(10,12.5)$) -- ($(a)+(-.5,2)$) -- ($(a)+(-.5,1.5)$) --  ($(a)+(25.5, 1.5)$) ;

\node[color=blue] at ($(a)+(-1.5,0)$) {\tiny a=12};
\node at ($(a)+(0,0)$) {\tiny 0}; \node at ($(a)+(1,0)$) {\tiny 0}; \node at ($(a)+(2,0)$) {\tiny 0}; \node at ($(a)+(3,0)$) {\tiny 0}; \node at ($(a)+(4,0)$) {\tiny 0}; \node at ($(a)+(5,0)$) {\tiny 0}; 
\node at ($(a)+(6,0)$) {\tiny 0}; \node at ($(a)+(7,0)$) {\tiny 0}; \node at ($(a)+(8,0)$) {\tiny 0}; \node at ($(a)+(9,0)$) {\tiny 0}; \node at ($(a)+(10,0)$) {\tiny 0}; \node at ($(a)+(11,0)$) {\tiny 0}; 
\node at ($(a)+(12,0)$) {\tiny 0}; \node at ($(a)+(13,0)$) {\tiny 0}; \node at ($(a)+(14,0)$) {\tiny 0}; \node at ($(a)+(15,0)$) {\tiny 0}; \node at ($(a)+(16,0)$) {\tiny 0}; \node at ($(a)+(17,0)$) {\tiny 0}; 
\node at ($(a)+(18,0)$) {\tiny 0}; \node at ($(a)+(19,0)$) {\tiny 0}; \node at ($(a)+(20,0)$) {\tiny 0}; \node at ($(a)+(21,0)$) {\tiny 0}; \node at ($(a)+(22,0)$) {\tiny 0}; \node at ($(a)+(23,0)$) {\tiny 0}; \node at ($(a)+(24,0)$) {\tiny 0}; \node at ($(a)+(25,0)$) {\tiny 0}; 

\node[color=blue] at ($(a)+(-1.5,1)$) {\tiny a=11};
\node at ($(a)+(0,1)$) {\tiny 0}; \node at ($(a)+(1,1)$) {\tiny 0}; \node at ($(a)+(2,1)$) {\tiny 0}; \node at ($(a)+(3,1)$) {\tiny 0}; \node at ($(a)+(4,1)$) {\tiny 0}; \node at ($(a)+(5,1)$) {\tiny 0}; 
\node at ($(a)+(6,1)$) {\tiny 0}; \node at ($(a)+(7,1)$) {\tiny 0}; \node at ($(a)+(8,1)$) {\tiny 0}; \node at ($(a)+(9,1)$) {\tiny 0}; \node at ($(a)+(10,1)$) {\tiny 0}; \node at ($(a)+(11,1)$) {\tiny 0}; 
\node at ($(a)+(12,1)$) {\tiny 0}; \node at ($(a)+(13,1)$) {\tiny 0}; \node at ($(a)+(14,1)$) {\tiny 0}; \node at ($(a)+(15,1)$) {\tiny 0}; \node at ($(a)+(16,1)$) {\tiny 0}; \node at ($(a)+(17,1)$) {\tiny 0}; 
\node at ($(a)+(18,1)$) {\tiny 0}; \node at ($(a)+(19,1)$) {\tiny 0}; \node at ($(a)+(20,1)$) {\tiny 0}; \node at ($(a)+(21,1)$) {\tiny 0}; \node at ($(a)+(22,1)$) {\tiny 0}; \node at ($(a)+(23,1)$) {\tiny 0}; \node at ($(a)+(24,1)$) {\tiny 0}; \node at ($(a)+(25,1)$) {\tiny 0}; 

\node[color=blue] at ($(a)+(-1.5,2)$) {\tiny a=10};
\node at ($(a)+(0,2)$) {\tiny 1}; \node at ($(a)+(1,2)$) {\tiny 1}; \node at ($(a)+(2,2)$) {\tiny 1}; \node at ($(a)+(3,2)$) {\tiny 1}; \node at ($(a)+(4,2)$) {\tiny 1}; \node at ($(a)+(5,2)$) {\tiny 1}; 
\node at ($(a)+(6,2)$) {\tiny 1}; \node at ($(a)+(7,2)$) {\tiny 1}; \node at ($(a)+(8,2)$) {\tiny 1}; \node at ($(a)+(9,2)$) {\tiny 1}; \node at ($(a)+(10,2)$) {\tiny 1}; \node at ($(a)+(11,2)$) {\tiny 1}; 
\node at ($(a)+(12,2)$) {\tiny 1}; \node at ($(a)+(13,2)$) {\tiny 1}; \node at ($(a)+(14,2)$) {\tiny 1}; \node at ($(a)+(15,2)$) {\tiny 1}; \node at ($(a)+(16,2)$) {\tiny 1}; \node at ($(a)+(17,2)$) {\tiny 1}; 
\node at ($(a)+(18,2)$) {\tiny 1}; \node at ($(a)+(19,2)$) {\tiny 1}; \node at ($(a)+(20,2)$) {\tiny 1}; \node at ($(a)+(21,2)$) {\tiny 1}; \node at ($(a)+(22,2)$) {\tiny 1}; \node at ($(a)+(23,2)$) {\tiny 1}; \node at ($(a)+(24,2)$) {\tiny 1}; \node at ($(a)+(25,2)$) {\tiny 1}; 

\node[color=blue] at ($(a)+(-1.5,3)$) {\tiny a=9};
\node at ($(a)+(0,3)$) {\tiny 0}; \node at ($(a)+(1,3)$) {\tiny 10}; \node at ($(a)+(2,3)$) {\tiny 9}; \node at ($(a)+(3,3)$) {\tiny 8}; \node at ($(a)+(4,3)$) {\tiny 7}; \node at ($(a)+(5,3)$) {\tiny 6}; 
\node at ($(a)+(6,3)$) {\tiny 5}; \node at ($(a)+(7,3)$) {\tiny 4}; \node at ($(a)+(8,3)$) {\tiny 3}; \node at ($(a)+(9,3)$) {\tiny 2}; \node at ($(a)+(10,3)$) {\tiny 1}; \node at ($(a)+(11,3)$) {\tiny 0}; 
\node at ($(a)+(12,3)$) {\tiny 10}; \node at ($(a)+(13,3)$) {\tiny 9}; \node at ($(a)+(14,3)$) {\tiny 8}; \node at ($(a)+(15,3)$) {\tiny 7}; \node at ($(a)+(16,3)$) {\tiny 6}; \node at ($(a)+(17,3)$) {\tiny 5}; 
\node at ($(a)+(18,3)$) {\tiny 4}; \node at ($(a)+(19,3)$) {\tiny 3}; \node at ($(a)+(20,3)$) {\tiny 2}; \node at ($(a)+(21,3)$) {\tiny 1}; \node at ($(a)+(22,3)$) {\tiny 0}; \node at ($(a)+(23,3)$) {\tiny 10}; \node at ($(a)+(24,3)$) {\tiny 9}; \node at ($(a)+(25,3)$) {\tiny 8}; 

\node[color=blue] at ($(a)+(-1.5,4)$) {\tiny a=8};
\node at ($(a)+(0,4)$) {\tiny 0}; \node at ($(a)+(1,4)$) {\tiny 0}; \node at ($(a)+(2,4)$) {\tiny 1}; \node at ($(a)+(3,4)$) {\tiny 3}; \node at ($(a)+(4,4)$) {\tiny 6}; \node at ($(a)+(5,4)$) {\tiny 10}; 
\node at ($(a)+(6,4)$) {\tiny 4}; \node at ($(a)+(7,4)$) {\tiny 10}; \node at ($(a)+(8,4)$) {\tiny 6}; \node at ($(a)+(9,4)$) {\tiny 3}; \node at ($(a)+(10,4)$) {\tiny 1}; \node at ($(a)+(11,4)$) {\tiny 0}; 
\node at ($(a)+(12,4)$) {\tiny 0}; \node at ($(a)+(13,4)$) {\tiny 1}; \node at ($(a)+(14,4)$) {\tiny 3}; \node at ($(a)+(15,4)$) {\tiny 6}; \node at ($(a)+(16,4)$) {\tiny 10}; \node at ($(a)+(17,4)$) {\tiny 4}; 
\node at ($(a)+(18,4)$) {\tiny 10}; \node at ($(a)+(19,4)$) {\tiny 6}; \node at ($(a)+(20,4)$) {\tiny 3}; \node at ($(a)+(21,4)$) {\tiny 1}; \node at ($(a)+(22,4)$) {\tiny 0}; \node at ($(a)+(23,4)$) {\tiny 0}; \node at ($(a)+(24,4)$) {\tiny 1}; \node at ($(a)+(25,4)$) {\tiny 3}; 

\node[color=blue] at ($(a)+(-1.5,5)$) {\tiny a=7};
\node at ($(a)+(0,5)$) {\tiny 0}; \node at ($(a)+(1,5)$) {\tiny 0}; \node at ($(a)+(2,5)$) {\tiny 0}; \node at ($(a)+(3,5)$) {\tiny 10}; \node at ($(a)+(4,5)$) {\tiny 7}; \node at ($(a)+(5,5)$) {\tiny 1}; 
\node at ($(a)+(6,5)$) {\tiny 2}; \node at ($(a)+(7,5)$) {\tiny 9}; \node at ($(a)+(8,5)$) {\tiny 10}; \node at ($(a)+(9,5)$) {\tiny 4}; \node at ($(a)+(10,5)$) {\tiny 1}; \node at ($(a)+(11,5)$) {\tiny 0}; 
\node at ($(a)+(12,5)$) {\tiny 0}; \node at ($(a)+(13,5)$) {\tiny 0}; \node at ($(a)+(14,5)$) {\tiny 10}; \node at ($(a)+(15,5)$) {\tiny 7}; \node at ($(a)+(16,5)$) {\tiny 1}; \node at ($(a)+(17,5)$) {\tiny 2}; 
\node at ($(a)+(18,5)$) {\tiny 9}; \node at ($(a)+(19,5)$) {\tiny 10}; \node at ($(a)+(20,5)$) {\tiny 4}; \node at ($(a)+(21,5)$) {\tiny 1}; \node at ($(a)+(22,5)$) {\tiny 0}; \node at ($(a)+(23,5)$) {\tiny 0}; \node at ($(a)+(24,5)$) {\tiny 0}; \node at ($(a)+(25,5)$) {\tiny 10}; 

\node[color=blue] at ($(a)+(-1.5,6)$) {\tiny a=6};
\node at ($(a)+(0,6)$) {\tiny 0}; \node at ($(a)+(1,6)$) {\tiny 0}; \node at ($(a)+(2,6)$) {\tiny 0}; \node at ($(a)+(3,6)$) {\tiny 0}; \node at ($(a)+(4,6)$) {\tiny 1}; \node at ($(a)+(5,6)$) {\tiny 5}; 
\node at ($(a)+(6,6)$) {\tiny 4}; \node at ($(a)+(7,6)$) {\tiny 2}; \node at ($(a)+(8,6)$) {\tiny 4}; \node at ($(a)+(9,6)$) {\tiny 5}; \node at ($(a)+(10,6)$) {\tiny 1}; \node at ($(a)+(11,6)$) {\tiny 0}; 
\node at ($(a)+(12,6)$) {\tiny 0}; \node at ($(a)+(13,6)$) {\tiny 0}; \node at ($(a)+(14,6)$) {\tiny 0}; \node at ($(a)+(15,6)$) {\tiny 1}; \node at ($(a)+(16,6)$) {\tiny 5}; \node at ($(a)+(17,6)$) {\tiny 4}; 
\node at ($(a)+(18,6)$) {\tiny 2}; \node at ($(a)+(19,6)$) {\tiny 4}; \node at ($(a)+(20,6)$) {\tiny 5}; \node at ($(a)+(21,6)$) {\tiny 1}; \node at ($(a)+(22,6)$) {\tiny 0}; \node at ($(a)+(23,6)$) {\tiny 0}; \node at ($(a)+(24,6)$) {\tiny 0}; \node at ($(a)+(25,6)$) {\tiny 0}; 

\node[color=blue] at ($(a)+(-1.5,7)$) {\tiny a=5};
\node at ($(a)+(0,7)$) {\tiny 0}; \node at ($(a)+(1,7)$) {\tiny 0}; \node at ($(a)+(2,7)$) {\tiny 0}; \node at ($(a)+(3,7)$) {\tiny 0}; \node at ($(a)+(4,7)$) {\tiny 0}; \node at ($(a)+(5,7)$) {\tiny 10}; 
\node at ($(a)+(6,7)$) {\tiny 5}; \node at ($(a)+(7,7)$) {\tiny 1}; \node at ($(a)+(8,7)$) {\tiny 10}; \node at ($(a)+(9,7)$) {\tiny 6}; \node at ($(a)+(10,7)$) {\tiny 1}; \node at ($(a)+(11,7)$) {\tiny 0}; 
\node at ($(a)+(12,7)$) {\tiny 0}; \node at ($(a)+(13,7)$) {\tiny 0}; \node at ($(a)+(14,7)$) {\tiny 0}; \node at ($(a)+(15,7)$) {\tiny 0}; \node at ($(a)+(16,7)$) {\tiny 10}; \node at ($(a)+(17,7)$) {\tiny 5}; 
\node at ($(a)+(18,7)$) {\tiny 1}; \node at ($(a)+(19,7)$) {\tiny 10}; \node at ($(a)+(20,7)$) {\tiny 6}; \node at ($(a)+(21,7)$) {\tiny 1}; \node at ($(a)+(22,7)$) {\tiny 0}; \node at ($(a)+(23,7)$) {\tiny 0}; \node at ($(a)+(24,7)$) {\tiny 0}; \node at ($(a)+(25,7)$) {\tiny 0}; 

\node[color=blue] at ($(a)+(-1.5,8)$) {\tiny a=4};
\node at ($(a)+(0,8)$) {\tiny 0}; \node at ($(a)+(1,8)$) {\tiny 0}; \node at ($(a)+(2,8)$) {\tiny 0}; \node at ($(a)+(3,8)$) {\tiny 0}; \node at ($(a)+(4,8)$) {\tiny 0}; \node at ($(a)+(5,8)$) {\tiny 0}; 
\node at ($(a)+(6,8)$) {\tiny 1}; \node at ($(a)+(7,8)$) {\tiny 7}; \node at ($(a)+(8,8)$) {\tiny 6}; \node at ($(a)+(9,8)$) {\tiny 7}; \node at ($(a)+(10,8)$) {\tiny 1}; \node at ($(a)+(11,8)$) {\tiny 0}; 
\node at ($(a)+(12,8)$) {\tiny 0}; \node at ($(a)+(13,8)$) {\tiny 0}; \node at ($(a)+(14,8)$) {\tiny 0}; \node at ($(a)+(15,8)$) {\tiny 0}; \node at ($(a)+(16,8)$) {\tiny 0}; \node at ($(a)+(17,8)$) {\tiny 1}; 
\node at ($(a)+(18,8)$) {\tiny 7}; \node at ($(a)+(19,8)$) {\tiny 6}; \node at ($(a)+(20,8)$) {\tiny 7}; \node at ($(a)+(21,8)$) {\tiny 1}; \node at ($(a)+(22,8)$) {\tiny 0}; \node at ($(a)+(23,8)$) {\tiny 0}; \node at ($(a)+(24,8)$) {\tiny 0};  \node at ($(a)+(25,8)$) {\tiny 0}; 

\node[color=blue] at ($(a)+(-1.5,9)$) {\tiny a=3};
\node at ($(a)+(0,9)$) {\tiny 0}; \node at ($(a)+(1,9)$) {\tiny 0}; \node at ($(a)+(2,9)$) {\tiny 0}; \node at ($(a)+(3,9)$) {\tiny 0}; \node at ($(a)+(4,9)$) {\tiny 0}; \node at ($(a)+(5,9)$) {\tiny 0}; 
\node at ($(a)+(6,9)$) {\tiny 0}; \node at ($(a)+(7,9)$) {\tiny 10}; \node at ($(a)+(8,9)$) {\tiny 3}; \node at ($(a)+(9,9)$) {\tiny 8}; \node at ($(a)+(10,9)$) {\tiny 1}; \node at ($(a)+(11,9)$) {\tiny 0}; 
\node at ($(a)+(12,9)$) {\tiny 0}; \node at ($(a)+(13,9)$) {\tiny 0}; \node at ($(a)+(14,9)$) {\tiny 0}; \node at ($(a)+(15,9)$) {\tiny 0}; \node at ($(a)+(16,9)$) {\tiny 0}; \node at ($(a)+(17,9)$) {\tiny 0}; 
\node at ($(a)+(18,9)$) {\tiny 10}; \node at ($(a)+(19,9)$) {\tiny 3}; \node at ($(a)+(20,9)$) {\tiny 8}; \node at ($(a)+(21,9)$) {\tiny 1}; \node at ($(a)+(22,9)$) {\tiny 0}; \node at ($(a)+(23,9)$) {\tiny 0}; \node at ($(a)+(24,9)$) {\tiny 0};  \node at ($(a)+(25,9)$) {\tiny 0}; 

\node[color=blue] at ($(a)+(-1.5,10)$) {\tiny a=2};
\node at ($(a)+(0,10)$) {\tiny 0}; \node at ($(a)+(1,10)$) {\tiny 0}; \node at ($(a)+(2,10)$) {\tiny 0}; \node at ($(a)+(3,10)$) {\tiny 0}; \node at ($(a)+(4,10)$) {\tiny 0}; \node at ($(a)+(5,10)$) {\tiny 0}; 
\node at ($(a)+(6,10)$) {\tiny 0}; \node at ($(a)+(7,10)$) {\tiny 0}; \node at ($(a)+(8,10)$) {\tiny 1}; \node at ($(a)+(9,10)$) {\tiny 9}; \node at ($(a)+(10,10)$) {\tiny 1}; \node at ($(a)+(11,10)$) {\tiny 0}; 
\node at ($(a)+(12,10)$) {\tiny 0}; \node at ($(a)+(13,10)$) {\tiny 0}; \node at ($(a)+(14,10)$) {\tiny 0}; \node at ($(a)+(15,10)$) {\tiny 0}; \node at ($(a)+(16,10)$) {\tiny 0}; \node at ($(a)+(17,10)$) {\tiny 0}; 
\node at ($(a)+(18,10)$) {\tiny 0}; \node at ($(a)+(19,10)$) {\tiny 1}; \node at ($(a)+(20,10)$) {\tiny 9}; \node at ($(a)+(21,10)$) {\tiny 1}; \node at ($(a)+(22,10)$) {\tiny 0}; \node at ($(a)+(23,10)$) {\tiny 0}; \node at ($(a)+(24,10)$) {\tiny 0};  \node at ($(a)+(25,10)$) {\tiny 0}; 

\node[color=blue] at ($(a)+(-1.5,11)$) {\tiny a=1};
\node at ($(a)+(0,11)$) {\tiny 0}; \node at ($(a)+(1,11)$) {\tiny 0}; \node at ($(a)+(2,11)$) {\tiny 0}; \node at ($(a)+(3,11)$) {\tiny 0}; \node at ($(a)+(4,11)$) {\tiny 0}; \node at ($(a)+(5,11)$) {\tiny 0}; 
\node at ($(a)+(6,11)$) {\tiny 0}; \node at ($(a)+(7,11)$) {\tiny 0}; \node at ($(a)+(8,11)$) {\tiny 0}; \node at ($(a)+(9,11)$) {\tiny 10}; \node at ($(a)+(10,11)$) {\tiny 1}; \node at ($(a)+(11,11)$) {\tiny 0}; 
\node at ($(a)+(12,11)$) {\tiny 0}; \node at ($(a)+(13,11)$) {\tiny 0}; \node at ($(a)+(14,11)$) {\tiny 0}; \node at ($(a)+(15,11)$) {\tiny 0}; \node at ($(a)+(16,11)$) {\tiny 0}; \node at ($(a)+(17,11)$) {\tiny 0}; 
\node at ($(a)+(18,11)$) {\tiny 0}; \node at ($(a)+(19,11)$) {\tiny 0}; \node at ($(a)+(20,11)$) {\tiny 10}; \node at ($(a)+(21,11)$) {\tiny 1}; \node at ($(a)+(22,11)$) {\tiny 0}; \node at ($(a)+(23,11)$) {\tiny 0}; \node at ($(a)+(24,11)$) {\tiny 0};  \node at ($(a)+(25,11)$) {\tiny 0}; 

\node[color=blue] at ($(a)+(-1.5,12)$) {\tiny a=0};
\node at ($(a)+(0,12)$) {\tiny 0}; \node at ($(a)+(1,12)$) {\tiny 0}; \node at ($(a)+(2,12)$) {\tiny 0}; \node at ($(a)+(3,12)$) {\tiny 0}; \node at ($(a)+(4,12)$) {\tiny 0}; \node at ($(a)+(5,12)$) {\tiny 0}; 
\node at ($(a)+(6,12)$) {\tiny 0}; \node at ($(a)+(7,12)$) {\tiny 0}; \node at ($(a)+(8,12)$) {\tiny 0}; \node at ($(a)+(9,12)$) {\tiny 0}; \node at ($(a)+(10,12)$) {\tiny 1}; \node at ($(a)+(11,12)$) {\tiny 0}; 
\node at ($(a)+(12,12)$) {\tiny 0}; \node at ($(a)+(13,12)$) {\tiny 0}; \node at ($(a)+(14,12)$) {\tiny 0}; \node at ($(a)+(15,12)$) {\tiny 0}; \node at ($(a)+(16,12)$) {\tiny 0}; \node at ($(a)+(17,12)$) {\tiny 0}; 
\node at ($(a)+(18,12)$) {\tiny 0}; \node at ($(a)+(19,12)$) {\tiny 0}; \node at ($(a)+(20,12)$) {\tiny 0}; \node at ($(a)+(21,12)$) {\tiny 1}; \node at ($(a)+(22,12)$) {\tiny 0}; \node at ($(a)+(23,12)$) {\tiny 0}; \node at ($(a)+(24,12)$) {\tiny 0};  \node at ($(a)+(25,12)$) {\tiny 0}; 

\node[color=blue] at ($(a)+(-0.41,13.5)$) {\tiny b=0}; 
\node[color=blue] at ($(a)+(1,13.5)$) {\tiny 1}; \node[color=blue] at ($(a)+(2,13.5)$) {\tiny 2}; 
\node[color=blue] at ($(a)+(3,13.5)$) {\tiny 3}; \node[color=blue] at ($(a)+(4,13.5)$) {\tiny 4}; \node[color=blue] at ($(a)+(5,13.5)$) {\tiny 5}; 
\node[color=blue] at ($(a)+(6,13.5)$) {\tiny 6}; \node[color=blue] at ($(a)+(7,13.5)$) {\tiny 7}; \node[color=blue] at ($(a)+(8,13.5)$) {\tiny 8}; 
\node[color=blue] at ($(a)+(9,13.5)$) {\tiny 9}; \node[color=blue] at ($(a)+(10,13.5)$) {\tiny 10}; \node[color=blue] at ($(a)+(11,13.5)$) {\tiny 11}; 
\node[color=blue] at ($(a)+(12,13.5)$) {\tiny 12}; \node[color=blue] at ($(a)+(13,13.5)$) {\tiny 13}; \node[color=blue] at ($(a)+(14,13.5)$) {\tiny 14}; 
\node[color=blue] at ($(a)+(15,13.5)$) {\tiny 15}; \node[color=blue] at ($(a)+(16,13.5)$) {\tiny 16}; \node[color=blue] at ($(a)+(17,13.5)$) {\tiny 17}; 
\node[color=blue] at ($(a)+(18,13.5)$) {\tiny 18}; \node[color=blue] at ($(a)+(19,13.5)$) {\tiny 19}; \node[color=blue] at ($(a)+(20,13.5)$) {\tiny 20}; 
\node[color=blue] at ($(a)+(21,13.5)$) {\tiny 21}; \node[color=blue] at ($(a)+(22,13.5)$) {\tiny 22}; \node[color=blue] at ($(a)+(23,13.5)$) {\tiny 23}; 
\node[color=blue] at ($(a)+(24,13.5)$) {\tiny 24}; \node[color=blue] at ($(a)+(25,13.5)$) {\tiny 25};


\coordinate (a) at (0,-11.5);

\draw ($ (a)+ (-2.2,10.3) $) -- ($ (a)+(25.5,10.3) $);

\path [fill=yellow] ($(a)+(7,9.5)$) -- ($(a)+(-.5,2)$)  -- ($(a)+(-.5,-.5)$)  --
($(a)+(.5,-.7)$)  --($(a)+(1.5,-.3)$)  --($(a)+(2.5,-.7)$)  --($(a)+(3.5,-.3)$)  --($(a)+(4.5,-.7)$)   --
 ($(a)+(5,-.5)$) -- ($(a)+(15,9.5)$) -- ($(a)+(7,9.5)$) ;
\path [fill=lightgray] ($(a)+(18,9.5)$) -- ($(a)+(10.5,2)$)  -- ($(a)+(10.5,5)$) -- ($(a)+(7,1.5)$) -- ($(a)+(25.5,1.5)$) 
-- ($(a)+(25.7,2)$) -- ($(a)+(25.3,2.5)$) -- ($(a)+(25.7,3)$) -- ($(a)+(25.3,3.5)$) -- ($(a)+(25.7,4)$) -- ($(a)+(25.3,4.5)$)  -- ($(a)+(25.7,5)$)  -- ($(a)+(25.3,5.5)$) 
-- ($(a)+(25.5,6)$)  -- ($(a)+(21.5,2)$) -- ($(a)+(21.5,5)$)  -- ($(a)+(18.5,2)$) -- ($(a)+(18.5,9.5)$);
\draw [ultra thick, dashed] ($(a)+(25.5, 9.5)$) -- ($(a)+(7,9.5)$) -- ($(a)+(-.5,2)$) -- ($(a)+(-.5,1.5)$) --  ($(a)+(25.5, 1.5)$) ;

\node[color=blue] at ($(a)+(-1.5,0)$) {\tiny a=9};
\node at ($(a)+(0,0)$) {\tiny 0}; \node at ($(a)+(1,0)$) {\tiny 0}; \node at ($(a)+(2,0)$) {\tiny 0}; \node at ($(a)+(3,0)$) {\tiny 0}; \node at ($(a)+(4,0)$) {\tiny 0}; \node at ($(a)+(5,0)$) {\tiny 0}; 
\node at ($(a)+(6,0)$) {\tiny 0}; \node at ($(a)+(7,0)$) {\tiny 0}; \node at ($(a)+(8,0)$) {\tiny 0}; \node at ($(a)+(9,0)$) {\tiny 0}; \node at ($(a)+(10,0)$) {\tiny 0}; \node at ($(a)+(11,0)$) {\tiny 0}; 
\node at ($(a)+(12,0)$) {\tiny 0}; \node at ($(a)+(13,0)$) {\tiny 0}; \node at ($(a)+(14,0)$) {\tiny 0}; \node at ($(a)+(15,0)$) {\tiny 0}; \node at ($(a)+(16,0)$) {\tiny 0}; \node at ($(a)+(17,0)$) {\tiny 0}; 
\node at ($(a)+(18,0)$) {\tiny 0}; \node at ($(a)+(19,0)$) {\tiny 0}; \node at ($(a)+(20,0)$) {\tiny 0}; \node at ($(a)+(21,0)$) {\tiny 0}; \node at ($(a)+(22,0)$) {\tiny 0}; \node at ($(a)+(23,0)$) {\tiny 0}; \node at ($(a)+(24,0)$) {\tiny 0}; \node at ($(a)+(25,0)$) {\tiny 0}; 

\node[color=blue] at ($(a)+(-1.5,1)$) {\tiny a=8};
\node at ($(a)+(0,1)$) {\tiny 0}; \node at ($(a)+(1,1)$) {\tiny 0}; \node at ($(a)+(2,1)$) {\tiny 0}; \node at ($(a)+(3,1)$) {\tiny 0}; \node at ($(a)+(4,1)$) {\tiny 0}; \node at ($(a)+(5,1)$) {\tiny 0}; 
\node at ($(a)+(6,1)$) {\tiny 0}; \node at ($(a)+(7,1)$) {\tiny 0}; \node at ($(a)+(8,1)$) {\tiny 0}; \node at ($(a)+(9,1)$) {\tiny 0}; \node at ($(a)+(10,1)$) {\tiny 0}; \node at ($(a)+(11,1)$) {\tiny 0}; 
\node at ($(a)+(12,1)$) {\tiny 0}; \node at ($(a)+(13,1)$) {\tiny 0}; \node at ($(a)+(14,1)$) {\tiny 0}; \node at ($(a)+(15,1)$) {\tiny 0}; \node at ($(a)+(16,1)$) {\tiny 0}; \node at ($(a)+(17,1)$) {\tiny 0}; 
\node at ($(a)+(18,1)$) {\tiny 0}; \node at ($(a)+(19,1)$) {\tiny 0}; \node at ($(a)+(20,1)$) {\tiny 0}; \node at ($(a)+(21,1)$) {\tiny 0}; \node at ($(a)+(22,1)$) {\tiny 0}; \node at ($(a)+(23,1)$) {\tiny 0}; \node at ($(a)+(24,1)$) {\tiny 0}; \node at ($(a)+(25,1)$) {\tiny 0}; 

\node[color=blue] at ($(a)+(-1.5,2)$) {\tiny a=7};
\node at ($(a)+(0,2)$) {\tiny 2}; \node at ($(a)+(1,2)$) {\tiny 2}; \node at ($(a)+(2,2)$) {\tiny 2}; \node at ($(a)+(3,2)$) {\tiny 2}; \node at ($(a)+(4,2)$) {\tiny 2}; \node at ($(a)+(5,2)$) {\tiny 2}; 
\node at ($(a)+(6,2)$) {\tiny 2}; \node at ($(a)+(7,2)$) {\tiny 2}; \node at ($(a)+(8,2)$) {\tiny 2}; \node at ($(a)+(9,2)$) {\tiny 2}; \node at ($(a)+(10,2)$) {\tiny 2}; \node at ($(a)+(11,2)$) {\tiny 2}; 
\node at ($(a)+(12,2)$) {\tiny 2}; \node at ($(a)+(13,2)$) {\tiny 2}; \node at ($(a)+(14,2)$) {\tiny 2}; \node at ($(a)+(15,2)$) {\tiny 2}; \node at ($(a)+(16,2)$) {\tiny 2}; \node at ($(a)+(17,2)$) {\tiny 2}; 
\node at ($(a)+(18,2)$) {\tiny 2}; \node at ($(a)+(19,2)$) {\tiny 2}; \node at ($(a)+(20,2)$) {\tiny 2}; \node at ($(a)+(21,2)$) {\tiny 2}; \node at ($(a)+(22,2)$) {\tiny 2}; \node at ($(a)+(23,2)$) {\tiny 2}; \node at ($(a)+(24,2)$) {\tiny 2}; \node at ($(a)+(25,2)$) {\tiny 2}; 

\node[color=blue] at ($(a)+(-1.5,3)$) {\tiny a=6};
\node at ($(a)+(0,3)$) {\tiny 0}; \node at ($(a)+(1,3)$) {\tiny 2}; \node at ($(a)+(2,3)$) {\tiny 5}; \node at ($(a)+(3,3)$) {\tiny 9}; \node at ($(a)+(4,3)$) {\tiny 3}; \node at ($(a)+(5,3)$) {\tiny 9}; 
\node at ($(a)+(6,3)$) {\tiny 5}; \node at ($(a)+(7,3)$) {\tiny 2}; \node at ($(a)+(8,3)$) {\tiny 0}; \node at ($(a)+(9,3)$) {\tiny 10}; \node at ($(a)+(10,3)$) {\tiny 10}; \node at ($(a)+(11,3)$) {\tiny 0}; 
\node at ($(a)+(12,3)$) {\tiny 2}; \node at ($(a)+(13,3)$) {\tiny 5}; \node at ($(a)+(14,3)$) {\tiny 9}; \node at ($(a)+(15,3)$) {\tiny 3}; \node at ($(a)+(16,3)$) {\tiny 9}; \node at ($(a)+(17,3)$) {\tiny 5}; 
\node at ($(a)+(18,3)$) {\tiny 2}; \node at ($(a)+(19,3)$) {\tiny 0}; \node at ($(a)+(20,3)$) {\tiny 10}; \node at ($(a)+(21,3)$) {\tiny 10}; \node at ($(a)+(22,3)$) {\tiny 0}; \node at ($(a)+(23,3)$) {\tiny 2}; \node at ($(a)+(24,3)$) {\tiny 5}; \node at ($(a)+(25,3)$) {\tiny 9}; 

\node[color=blue] at ($(a)+(-1.5,4)$) {\tiny a=5};
\node at ($(a)+(0,4)$) {\tiny 0}; \node at ($(a)+(1,4)$) {\tiny 0}; \node at ($(a)+(2,4)$) {\tiny 2}; \node at ($(a)+(3,4)$) {\tiny 9}; \node at ($(a)+(4,4)$) {\tiny 1}; \node at ($(a)+(5,4)$) {\tiny 1}; 
\node at ($(a)+(6,4)$) {\tiny 9}; \node at ($(a)+(7,4)$) {\tiny 2}; \node at ($(a)+(8,4)$) {\tiny 0}; \node at ($(a)+(9,4)$) {\tiny 0}; \node at ($(a)+(10,4)$) {\tiny 9}; \node at ($(a)+(11,4)$) {\tiny 0}; 
\node at ($(a)+(12,4)$) {\tiny 0}; \node at ($(a)+(13,4)$) {\tiny 2}; \node at ($(a)+(14,4)$) {\tiny 9}; \node at ($(a)+(15,4)$) {\tiny 1}; \node at ($(a)+(16,4)$) {\tiny 1}; \node at ($(a)+(17,4)$) {\tiny 9}; 
\node at ($(a)+(18,4)$) {\tiny 2}; \node at ($(a)+(19,4)$) {\tiny 0}; \node at ($(a)+(20,4)$) {\tiny 0}; \node at ($(a)+(21,4)$) {\tiny 9}; \node at ($(a)+(22,4)$) {\tiny 0}; \node at ($(a)+(23,4)$) {\tiny 0}; \node at ($(a)+(24,4)$) {\tiny 2}; \node at ($(a)+(25,4)$) {\tiny 9}; 

\node[color=blue] at ($(a)+(-1.5,5)$) {\tiny a=4};
\node at ($(a)+(0,5)$) {\tiny 0}; \node at ($(a)+(1,5)$) {\tiny 0}; \node at ($(a)+(2,5)$) {\tiny 0}; \node at ($(a)+(3,5)$) {\tiny 2}; \node at ($(a)+(4,5)$) {\tiny 3}; \node at ($(a)+(5,5)$) {\tiny 1}; 
\node at ($(a)+(6,5)$) {\tiny 3}; \node at ($(a)+(7,5)$) {\tiny 2}; \node at ($(a)+(8,5)$) {\tiny 0}; \node at ($(a)+(9,5)$) {\tiny 0}; \node at ($(a)+(10,5)$) {\tiny 0}; \node at ($(a)+(11,5)$) {\tiny 0}; 
\node at ($(a)+(12,5)$) {\tiny 0}; \node at ($(a)+(13,5)$) {\tiny 0}; \node at ($(a)+(14,5)$) {\tiny 2}; \node at ($(a)+(15,5)$) {\tiny 3}; \node at ($(a)+(16,5)$) {\tiny 1}; \node at ($(a)+(17,5)$) {\tiny 3}; 
\node at ($(a)+(18,5)$) {\tiny 2}; \node at ($(a)+(19,5)$) {\tiny 0}; \node at ($(a)+(20,5)$) {\tiny 0}; \node at ($(a)+(21,5)$) {\tiny 0}; \node at ($(a)+(22,5)$) {\tiny 0}; \node at ($(a)+(23,5)$) {\tiny 0}; \node at ($(a)+(24,5)$) {\tiny 0}; \node at ($(a)+(25,5)$) {\tiny 2}; 

\node[color=blue] at ($(a)+(-1.5,6)$) {\tiny a=3};
\node at ($(a)+(0,6)$) {\tiny 0}; \node at ($(a)+(1,6)$) {\tiny 0}; \node at ($(a)+(2,6)$) {\tiny 0}; \node at ($(a)+(3,6)$) {\tiny 0}; \node at ($(a)+(4,6)$) {\tiny 2}; \node at ($(a)+(5,6)$) {\tiny 9}; 
\node at ($(a)+(6,6)$) {\tiny 9}; \node at ($(a)+(7,6)$) {\tiny 2}; \node at ($(a)+(8,6)$) {\tiny 0}; \node at ($(a)+(9,6)$) {\tiny 0}; \node at ($(a)+(10,6)$) {\tiny 0}; \node at ($(a)+(11,6)$) {\tiny 0}; 
\node at ($(a)+(12,6)$) {\tiny 0}; \node at ($(a)+(13,6)$) {\tiny 0}; \node at ($(a)+(14,6)$) {\tiny 0}; \node at ($(a)+(15,6)$) {\tiny 2}; \node at ($(a)+(16,6)$) {\tiny 9}; \node at ($(a)+(17,6)$) {\tiny 9}; 
\node at ($(a)+(18,6)$) {\tiny 2}; \node at ($(a)+(19,6)$) {\tiny 0}; \node at ($(a)+(20,6)$) {\tiny 0}; \node at ($(a)+(21,6)$) {\tiny 0}; \node at ($(a)+(22,6)$) {\tiny 0}; \node at ($(a)+(23,6)$) {\tiny 0}; \node at ($(a)+(24,6)$) {\tiny 0}; \node at ($(a)+(25,6)$) {\tiny 0}; 

\node[color=blue] at ($(a)+(-1.5,7)$) {\tiny a=2};
\node at ($(a)+(0,7)$) {\tiny 0}; \node at ($(a)+(1,7)$) {\tiny 0}; \node at ($(a)+(2,7)$) {\tiny 0}; \node at ($(a)+(3,7)$) {\tiny 0}; \node at ($(a)+(4,7)$) {\tiny 0}; \node at ($(a)+(5,7)$) {\tiny 2}; 
\node at ($(a)+(6,7)$) {\tiny 5}; \node at ($(a)+(7,7)$) {\tiny 2}; \node at ($(a)+(8,7)$) {\tiny 0}; \node at ($(a)+(9,7)$) {\tiny 0}; \node at ($(a)+(10,7)$) {\tiny 0}; \node at ($(a)+(11,7)$) {\tiny 0}; 
\node at ($(a)+(12,7)$) {\tiny 0}; \node at ($(a)+(13,7)$) {\tiny 0}; \node at ($(a)+(14,7)$) {\tiny 0}; \node at ($(a)+(15,7)$) {\tiny 0}; \node at ($(a)+(16,7)$) {\tiny 2}; \node at ($(a)+(17,7)$) {\tiny 5}; 
\node at ($(a)+(18,7)$) {\tiny 2}; \node at ($(a)+(19,7)$) {\tiny 0}; \node at ($(a)+(20,7)$) {\tiny 0}; \node at ($(a)+(21,7)$) {\tiny 0}; \node at ($(a)+(22,7)$) {\tiny 0}; \node at ($(a)+(23,7)$) {\tiny 0}; \node at ($(a)+(24,7)$) {\tiny 0}; \node at ($(a)+(25,7)$) {\tiny 0}; 

\node[color=blue] at ($(a)+(-1.5,8)$) {\tiny a=1};
\node at ($(a)+(0,8)$) {\tiny 0}; \node at ($(a)+(1,8)$) {\tiny 0}; \node at ($(a)+(2,8)$) {\tiny 0}; \node at ($(a)+(3,8)$) {\tiny 0}; \node at ($(a)+(4,8)$) {\tiny 0}; \node at ($(a)+(5,8)$) {\tiny 0}; 
\node at ($(a)+(6,8)$) {\tiny 2}; \node at ($(a)+(7,8)$) {\tiny 2}; \node at ($(a)+(8,8)$) {\tiny 0}; \node at ($(a)+(9,8)$) {\tiny 0}; \node at ($(a)+(10,8)$) {\tiny 0}; \node at ($(a)+(11,8)$) {\tiny 0}; 
\node at ($(a)+(12,8)$) {\tiny 0}; \node at ($(a)+(13,8)$) {\tiny 0}; \node at ($(a)+(14,8)$) {\tiny 0}; \node at ($(a)+(15,8)$) {\tiny 0}; \node at ($(a)+(16,8)$) {\tiny 0}; \node at ($(a)+(17,8)$) {\tiny 2}; 
\node at ($(a)+(18,8)$) {\tiny 2}; \node at ($(a)+(19,8)$) {\tiny 0}; \node at ($(a)+(20,8)$) {\tiny 0}; \node at ($(a)+(21,8)$) {\tiny 0}; \node at ($(a)+(22,8)$) {\tiny 0}; \node at ($(a)+(23,8)$) {\tiny 0}; \node at ($(a)+(24,8)$) {\tiny 0};  \node at ($(a)+(25,8)$) {\tiny 0}; 

\node[color=blue] at ($(a)+(-1.5,9)$) {\tiny a=0};
\node at ($(a)+(0,9)$) {\tiny 0}; \node at ($(a)+(1,9)$) {\tiny 0}; \node at ($(a)+(2,9)$) {\tiny 0}; \node at ($(a)+(3,9)$) {\tiny 0}; \node at ($(a)+(4,9)$) {\tiny 0}; \node at ($(a)+(5,9)$) {\tiny 0}; 
\node at ($(a)+(6,9)$) {\tiny 0}; \node at ($(a)+(7,9)$) {\tiny 2}; \node at ($(a)+(8,9)$) {\tiny 0}; \node at ($(a)+(9,9)$) {\tiny 0}; \node at ($(a)+(10,9)$) {\tiny 0}; \node at ($(a)+(11,9)$) {\tiny 0}; 
\node at ($(a)+(12,9)$) {\tiny 0}; \node at ($(a)+(13,9)$) {\tiny 0}; \node at ($(a)+(14,9)$) {\tiny 0}; \node at ($(a)+(15,9)$) {\tiny 0}; \node at ($(a)+(16,9)$) {\tiny 0}; \node at ($(a)+(17,9)$) {\tiny 0}; 
\node at ($(a)+(18,9)$) {\tiny 2}; \node at ($(a)+(19,9)$) {\tiny 0}; \node at ($(a)+(20,9)$) {\tiny 0}; \node at ($(a)+(21,9)$) {\tiny 0}; \node at ($(a)+(22,9)$) {\tiny 0}; \node at ($(a)+(23,9)$) {\tiny 0}; \node at ($(a)+(24,9)$) {\tiny 0};  \node at ($(a)+(25,9)$) {\tiny 0}; 


\coordinate (a) at (0,-22);

\draw ($ (a)+ (-2.2,9.3) $) -- ($ (a)+(25.5,9.3) $);

\path [fill=yellow] ($(a)+(4,8.5)$) -- ($(a)+(-.5,4)$)  -- ($(a)+(-.5,-1)$) -- ($(a)+(9,8.5)$) -- ($(a)+(4,8.5)$) ;
\path [fill=lightgray] ($(a)+(7.5,7)$) -- ($(a)+(4,3.5)$)  -- ($(a)+(25.5,3.5)$)  
-- ($(a)+(25.7,4)$) -- ($(a)+(25.3,4.5)$) -- ($(a)+(25.7,5)$) -- ($(a)+(25.3,5.5)$) -- ($(a)+(25.7,6)$) -- ($(a)+(25.3,6.5)$) -- ($(a)+(25.7,7)$) -- ($(a)+(25.3,7.5)$) 
-- ($(a)+(25.5,8)$) -- ($(a)+(21.5,4)$) -- ($(a)+(21.5,7)$)  -- ($(a)+(18.5,4)$)  -- ($(a)+(18.5,7)$) -- ($(a)+(15.5,4)$) --  ($(a)+(15.5,8.5)$) -- ($(a)+(15,8.5)$) -- ($(a)+(10.5,4)$) -- ($(a)+(10.5,7)$) -- ($(a)+(7.5,4)$) ;
\draw [ultra thick, dashed] ($(a)+(25.5, 8.5)$) -- ($(a)+(4,8.5)$) -- ($(a)+(-.5,4)$) -- ($(a)+(-.5,3.5)$) --  ($(a)+(25.5, 3.5)$) ;

\node[color=blue] at ($(a)+(-1.5,0)$) {\tiny a=8};
\node at ($(a)+(0,0)$) {\tiny 0}; \node at ($(a)+(1,0)$) {\tiny 0}; \node at ($(a)+(2,0)$) {\tiny 0}; \node at ($(a)+(3,0)$) {\tiny 0}; \node at ($(a)+(4,0)$) {\tiny 0}; \node at ($(a)+(5,0)$) {\tiny 0}; 
\node at ($(a)+(6,0)$) {\tiny 0}; \node at ($(a)+(7,0)$) {\tiny 0}; \node at ($(a)+(8,0)$) {\tiny 0}; \node at ($(a)+(9,0)$) {\tiny 0}; \node at ($(a)+(10,0)$) {\tiny 0}; \node at ($(a)+(11,0)$) {\tiny 0}; 
\node at ($(a)+(12,0)$) {\tiny 0}; \node at ($(a)+(13,0)$) {\tiny 0}; \node at ($(a)+(14,0)$) {\tiny 0}; \node at ($(a)+(15,0)$) {\tiny 0}; \node at ($(a)+(16,0)$) {\tiny 0}; \node at ($(a)+(17,0)$) {\tiny 0}; 
\node at ($(a)+(18,0)$) {\tiny 0}; \node at ($(a)+(19,0)$) {\tiny 0}; \node at ($(a)+(20,0)$) {\tiny 0}; \node at ($(a)+(21,0)$) {\tiny 0}; \node at ($(a)+(22,0)$) {\tiny 0}; \node at ($(a)+(23,0)$) {\tiny 0}; \node at ($(a)+(24,0)$) {\tiny 0}; \node at ($(a)+(25,0)$) {\tiny 0};

\node[color=blue] at ($(a)+(-1.5,1)$) {\tiny a=7};
\node at ($(a)+(0,1)$) {\tiny 0}; \node at ($(a)+(1,1)$) {\tiny 0}; \node at ($(a)+(2,1)$) {\tiny 0}; \node at ($(a)+(3,1)$) {\tiny 0}; \node at ($(a)+(4,1)$) {\tiny 0}; \node at ($(a)+(5,1)$) {\tiny 0}; 
\node at ($(a)+(6,1)$) {\tiny 0}; \node at ($(a)+(7,1)$) {\tiny 0}; \node at ($(a)+(8,1)$) {\tiny 0}; \node at ($(a)+(9,1)$) {\tiny 0}; \node at ($(a)+(10,1)$) {\tiny 0}; \node at ($(a)+(11,1)$) {\tiny 0}; 
\node at ($(a)+(12,1)$) {\tiny 0}; \node at ($(a)+(13,1)$) {\tiny 0}; \node at ($(a)+(14,1)$) {\tiny 0}; \node at ($(a)+(15,1)$) {\tiny 0}; \node at ($(a)+(16,1)$) {\tiny 0}; \node at ($(a)+(17,1)$) {\tiny 0}; 
\node at ($(a)+(18,1)$) {\tiny 0}; \node at ($(a)+(19,1)$) {\tiny 0}; \node at ($(a)+(20,1)$) {\tiny 0}; \node at ($(a)+(21,1)$) {\tiny 0}; \node at ($(a)+(22,1)$) {\tiny 0}; \node at ($(a)+(23,1)$) {\tiny 0}; \node at ($(a)+(24,1)$) {\tiny 0}; \node at ($(a)+(25,1)$) {\tiny 0}; 

\node[color=blue] at ($(a)+(-1.5,2)$) {\tiny a=6};
\node at ($(a)+(0,2)$) {\tiny 0}; \node at ($(a)+(1,2)$) {\tiny 0}; \node at ($(a)+(2,2)$) {\tiny 0}; \node at ($(a)+(3,2)$) {\tiny 0}; \node at ($(a)+(4,2)$) {\tiny 0}; \node at ($(a)+(5,2)$) {\tiny 0}; 
\node at ($(a)+(6,2)$) {\tiny 0}; \node at ($(a)+(7,2)$) {\tiny 0}; \node at ($(a)+(8,2)$) {\tiny 0}; \node at ($(a)+(9,2)$) {\tiny 0}; \node at ($(a)+(10,2)$) {\tiny 0}; \node at ($(a)+(11,2)$) {\tiny 0}; 
\node at ($(a)+(12,2)$) {\tiny 0}; \node at ($(a)+(13,2)$) {\tiny 0}; \node at ($(a)+(14,2)$) {\tiny 0}; \node at ($(a)+(15,2)$) {\tiny 0}; \node at ($(a)+(16,2)$) {\tiny 0}; \node at ($(a)+(17,2)$) {\tiny 0}; 
\node at ($(a)+(18,2)$) {\tiny 0}; \node at ($(a)+(19,2)$) {\tiny 0}; \node at ($(a)+(20,2)$) {\tiny 0}; \node at ($(a)+(21,2)$) {\tiny 0}; \node at ($(a)+(22,2)$) {\tiny 0}; \node at ($(a)+(23,2)$) {\tiny 0}; \node at ($(a)+(24,2)$) {\tiny 0}; \node at ($(a)+(25,2)$) {\tiny 0}; 

\node[color=blue] at ($(a)+(-1.5,3)$) {\tiny a=5};
\node at ($(a)+(0,3)$) {\tiny 0}; \node at ($(a)+(1,3)$) {\tiny 0}; \node at ($(a)+(2,3)$) {\tiny 0}; \node at ($(a)+(3,3)$) {\tiny 0}; \node at ($(a)+(4,3)$) {\tiny 0}; \node at ($(a)+(5,3)$) {\tiny 0}; 
\node at ($(a)+(6,3)$) {\tiny 0}; \node at ($(a)+(7,3)$) {\tiny 0}; \node at ($(a)+(8,3)$) {\tiny 0}; \node at ($(a)+(9,3)$) {\tiny 0}; \node at ($(a)+(10,3)$) {\tiny 0}; \node at ($(a)+(11,3)$) {\tiny 0}; 
\node at ($(a)+(12,3)$) {\tiny 0}; \node at ($(a)+(13,3)$) {\tiny 0}; \node at ($(a)+(14,3)$) {\tiny 0}; \node at ($(a)+(15,3)$) {\tiny 0}; \node at ($(a)+(16,3)$) {\tiny 0}; \node at ($(a)+(17,3)$) {\tiny 0}; 
\node at ($(a)+(18,3)$) {\tiny 0}; \node at ($(a)+(19,3)$) {\tiny 0}; \node at ($(a)+(20,3)$) {\tiny 0}; \node at ($(a)+(21,3)$) {\tiny 0}; \node at ($(a)+(22,3)$) {\tiny 0}; \node at ($(a)+(23,3)$) {\tiny 0}; \node at ($(a)+(24,3)$) {\tiny 0}; \node at ($(a)+(25,3)$) {\tiny 0}; 

\node[color=blue] at ($(a)+(-1.5,4)$) {\tiny a=4};
\node at ($(a)+(0,4)$) {\tiny 3}; \node at ($(a)+(1,4)$) {\tiny 3}; \node at ($(a)+(2,4)$) {\tiny 3}; \node at ($(a)+(3,4)$) {\tiny 3}; \node at ($(a)+(4,4)$) {\tiny 3}; \node at ($(a)+(5,4)$) {\tiny 3}; 
\node at ($(a)+(6,4)$) {\tiny 3}; \node at ($(a)+(7,4)$) {\tiny 3}; \node at ($(a)+(8,4)$) {\tiny 3}; \node at ($(a)+(9,4)$) {\tiny 3}; \node at ($(a)+(10,4)$) {\tiny 3}; \node at ($(a)+(11,4)$) {\tiny 3}; 
\node at ($(a)+(12,4)$) {\tiny 3}; \node at ($(a)+(13,4)$) {\tiny 3}; \node at ($(a)+(14,4)$) {\tiny 3}; \node at ($(a)+(15,4)$) {\tiny 3}; \node at ($(a)+(16,4)$) {\tiny 3}; \node at ($(a)+(17,4)$) {\tiny 3}; 
\node at ($(a)+(18,4)$) {\tiny 3}; \node at ($(a)+(19,4)$) {\tiny 3}; \node at ($(a)+(20,4)$) {\tiny 3}; \node at ($(a)+(21,4)$) {\tiny 3}; \node at ($(a)+(22,4)$) {\tiny 3}; \node at ($(a)+(23,4)$) {\tiny 3}; \node at ($(a)+(24,4)$) {\tiny 3}; \node at ($(a)+(25,4)$) {\tiny 3}; 

\node[color=blue] at ($(a)+(-1.5,5)$) {\tiny a=3};
\node at ($(a)+(0,5)$) {\tiny 0}; \node at ($(a)+(1,5)$) {\tiny 8}; \node at ($(a)+(2,5)$) {\tiny 4}; \node at ($(a)+(3,5)$) {\tiny 7}; \node at ($(a)+(4,5)$) {\tiny 3}; \node at ($(a)+(5,5)$) {\tiny 0}; 
\node at ($(a)+(6,5)$) {\tiny 6}; \node at ($(a)+(7,5)$) {\tiny 7}; \node at ($(a)+(8,5)$) {\tiny 0}; \node at ($(a)+(9,5)$) {\tiny 4}; \node at ($(a)+(10,5)$) {\tiny 5}; \node at ($(a)+(11,5)$) {\tiny 0}; 
\node at ($(a)+(12,5)$) {\tiny 8}; \node at ($(a)+(13,5)$) {\tiny 4}; \node at ($(a)+(14,5)$) {\tiny 7}; \node at ($(a)+(15,5)$) {\tiny 3}; \node at ($(a)+(16,5)$) {\tiny 0}; \node at ($(a)+(17,5)$) {\tiny 6}; 
\node at ($(a)+(18,5)$) {\tiny 7}; \node at ($(a)+(19,5)$) {\tiny 0}; \node at ($(a)+(20,5)$) {\tiny 4}; \node at ($(a)+(21,5)$) {\tiny 5}; \node at ($(a)+(22,5)$) {\tiny 0}; \node at ($(a)+(23,5)$) {\tiny 8}; \node at ($(a)+(24,5)$) {\tiny 4}; \node at ($(a)+(25,5)$) {\tiny 7}; 

\node[color=blue] at ($(a)+(-1.5,6)$) {\tiny a=2};
\node at ($(a)+(0,6)$) {\tiny 0}; \node at ($(a)+(1,6)$) {\tiny 0}; \node at ($(a)+(2,6)$) {\tiny 3}; \node at ($(a)+(3,6)$) {\tiny 4}; \node at ($(a)+(4,6)$) {\tiny 3}; \node at ($(a)+(5,6)$) {\tiny 0}; 
\node at ($(a)+(6,6)$) {\tiny 0}; \node at ($(a)+(7,6)$) {\tiny 6}; \node at ($(a)+(8,6)$) {\tiny 0}; \node at ($(a)+(9,6)$) {\tiny 0}; \node at ($(a)+(10,6)$) {\tiny 6}; \node at ($(a)+(11,6)$) {\tiny 0}; 
\node at ($(a)+(12,6)$) {\tiny 0}; \node at ($(a)+(13,6)$) {\tiny 3}; \node at ($(a)+(14,6)$) {\tiny 4}; \node at ($(a)+(15,6)$) {\tiny 3}; \node at ($(a)+(16,6)$) {\tiny 0}; \node at ($(a)+(17,6)$) {\tiny 0}; 
\node at ($(a)+(18,6)$) {\tiny 6}; \node at ($(a)+(19,6)$) {\tiny 0}; \node at ($(a)+(20,6)$) {\tiny 0}; \node at ($(a)+(21,6)$) {\tiny 6}; \node at ($(a)+(22,6)$) {\tiny 0}; \node at ($(a)+(23,6)$) {\tiny 0}; \node at ($(a)+(24,6)$) {\tiny 3}; \node at ($(a)+(25,6)$) {\tiny 4}; 

\node[color=blue] at ($(a)+(-1.5,7)$) {\tiny a=1};
\node at ($(a)+(0,7)$) {\tiny 0}; \node at ($(a)+(1,7)$) {\tiny 0}; \node at ($(a)+(2,7)$) {\tiny 0}; \node at ($(a)+(3,7)$) {\tiny 8}; \node at ($(a)+(4,7)$) {\tiny 3}; \node at ($(a)+(5,7)$) {\tiny 0}; 
\node at ($(a)+(6,7)$) {\tiny 0}; \node at ($(a)+(7,7)$) {\tiny 0}; \node at ($(a)+(8,7)$) {\tiny 0}; \node at ($(a)+(9,7)$) {\tiny 0}; \node at ($(a)+(10,7)$) {\tiny 0}; \node at ($(a)+(11,7)$) {\tiny 0}; 
\node at ($(a)+(12,7)$) {\tiny 0}; \node at ($(a)+(13,7)$) {\tiny 0}; \node at ($(a)+(14,7)$) {\tiny 8}; \node at ($(a)+(15,7)$) {\tiny 3}; \node at ($(a)+(16,7)$) {\tiny 0}; \node at ($(a)+(17,7)$) {\tiny 0}; 
\node at ($(a)+(18,7)$) {\tiny 0}; \node at ($(a)+(19,7)$) {\tiny 0}; \node at ($(a)+(20,7)$) {\tiny 0}; \node at ($(a)+(21,7)$) {\tiny 0}; \node at ($(a)+(22,7)$) {\tiny 0}; \node at ($(a)+(23,7)$) {\tiny 0}; \node at ($(a)+(24,7)$) {\tiny 0}; \node at ($(a)+(25,7)$) {\tiny 8}; 

\node[color=blue] at ($(a)+(-1.5,8)$) {\tiny a=0};
\node at ($(a)+(0,8)$) {\tiny 0}; \node at ($(a)+(1,8)$) {\tiny 0}; \node at ($(a)+(2,8)$) {\tiny 0}; \node at ($(a)+(3,8)$) {\tiny 0}; \node at ($(a)+(4,8)$) {\tiny 3}; \node at ($(a)+(5,8)$) {\tiny 0}; 
\node at ($(a)+(6,8)$) {\tiny 0}; \node at ($(a)+(7,8)$) {\tiny 0}; \node at ($(a)+(8,8)$) {\tiny 0}; \node at ($(a)+(9,8)$) {\tiny 0}; \node at ($(a)+(10,8)$) {\tiny 0}; \node at ($(a)+(11,8)$) {\tiny 0}; 
\node at ($(a)+(12,8)$) {\tiny 0}; \node at ($(a)+(13,8)$) {\tiny 0}; \node at ($(a)+(14,8)$) {\tiny 0}; \node at ($(a)+(15,8)$) {\tiny 3}; \node at ($(a)+(16,8)$) {\tiny 0}; \node at ($(a)+(17,8)$) {\tiny 0}; 
\node at ($(a)+(18,8)$) {\tiny 0}; \node at ($(a)+(19,8)$) {\tiny 0}; \node at ($(a)+(20,8)$) {\tiny 0}; \node at ($(a)+(21,8)$) {\tiny 0}; \node at ($(a)+(22,8)$) {\tiny 0}; \node at ($(a)+(23,8)$) {\tiny 0}; \node at ($(a)+(24,8)$) {\tiny 0};  \node at ($(a)+(25,8)$) {\tiny 0}; 

\end{tikzpicture}
\caption{Tables of $S_1(a,b,-)$, $S_2(a,b,3)$, $S_3(a,b,3)$ values for $p=11$ and small integers $a,b$. Yellow shading indicates the range covered by Theorem~\ref{thm nd}, and the dotted lines enclose the  region covered by Theorem~\ref{thm SM}.
The structure of the gray shading is discussed in Section \ref{sec 5.3}.
}
\end{figure}
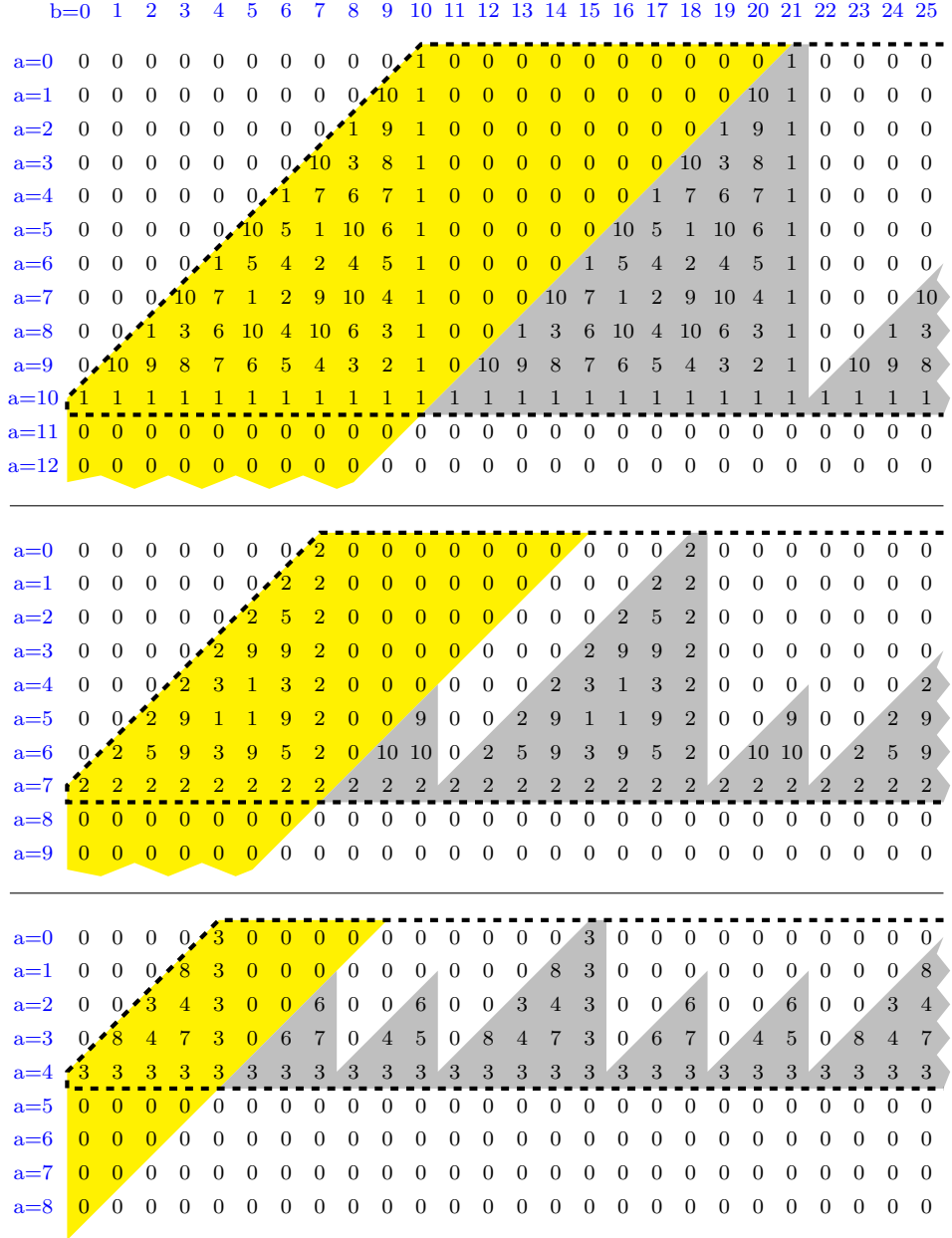

\subsection{Factorization properties}
\label{sec 5.3}

By Lemmas \ref{lem b=b+p} and \ref{lem S=0} we have
$S_n(a,b+p,c) = S_n(a,b,c)$ and $S_n(a,b,c)=0$ if
$a\geq p-(n-1)c$. Thus, for given $c$, it is enough to analyze $S_n(a,b,c)$ in the rectangle
$\Om=\{ (a,b) \ | \ a\in [0, p-1-(n-1)c], b\in [0,p-1]\}$. This rectangle is partitioned into $n$ smaller
rectangles :
\bea
\Om_0(n,c)
&=&
\{ (a,b) \ | \ a\in [0, \ p-1-(n-1)c], \ \ b\in [0,\ p-1-(n-1)c]\}\,,
\\
\Om_i(n,c)
&=&
\{ (a,b) \ | \ a\in [0,\ p-1-(n-1)c], 
\\
&&
\phantom{aaaaaaa}
b\in [p-1-(n-i)c+1, \ p-1-(n-i-1)c]\}\,,
\quad i=1,\dots, n-1,
\eea
see the
tables in Figure 1. The values of $S_n(a,b,c)$ in $\Om_0(n,c)$ are given by Theorem \ref{thm nd} and Lemma \ref{lem S=01}.
The values of $S_n(a,b,c)$ in a rectangle $\Om_i(n,c)$ are given by Theorem \ref{thm nd} and Lemma \ref{lem S=01} also,
but
applied to $\F_p$-Selberg integrals of smaller dimensions with the same value of
$c$ and suitable choices of values for $a$ and $b$.
Namely, we have the following factorization property.

\begin{thm}
\label{thm facto} For   $(a,b)\in \Om_i(n,c)$ with $i>0$, we have
\bean
\label{factor}
S_n(a,b,c) \,&=&\, (-1)^{(n-i)ic}\,\binom{nc}{ic}\, 
\\
\notag
&\times&
\frac{\prod_{j=1}^{n-i}\binom{p-1-(n-j)c-a}{(j-1)c}\,\prod_{j=1}^{i}\binom{p-1-(n-j)c-a}{(j-1)c}}
{\prod_{j=1}^{n}\binom{p-1-(n-j)c-a}{(j-1)c}}\,
\\
\notag
&\times &
S_{n-i}(a+ic,b,c) \, S_{i}(a+(n-i)c, b+(n-i)c -p,c)\,.
\eean

\end{thm}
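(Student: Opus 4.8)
The plan is to reduce everything to the binomial form of the Morris evaluation, Theorem~\ref{thm SMnb}, applied simultaneously to all three Selberg integrals in \eqref{factor}. Set $k=p-1-(n-1)c-a$, so that $a=p-1-(n-1)c-k$ with $0\le k\le p-1$. The pleasant first observation is that the two shifted integrals on the right carry the \emph{same} value of $k$: for $S_{n-i}(a+ic,b,c)$ one has $p-1-((n-i)-1)c-(a+ic)=p-1-(n-1)c-a=k$, and likewise $p-1-(i-1)c-(a+(n-i)c)=k$ for $S_i(a+(n-i)c,b+(n-i)c-p,c)$. Writing $D_j=\binom{(j-1)c+k}{k}$, note also that the binomials in the ratio of \eqref{factor} are exactly these, since $\binom{p-1-(n-j)c-a}{(j-1)c}=\binom{(j-1)c+k}{k}=D_j$.

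Before invoking Theorem~\ref{thm SMnb} I would check its hypotheses \eqref{Mineq} on $\Om_i$. The upper inequality $a+(n-1)c\le p-1$ is the defining condition of $\Om$ and, after the shifts, reduces to the same statement for all three integrals. The lower inequality for $S_n$ and for $S_{n-i}$ both read $a+b+(n-1)c\ge p-1$, i.e.\ $b\ge k$, which holds because on $\Om_i$ one has $b\ge p-(n-i)c\ge k$. For the third integral the lower inequality becomes $B'\ge k$, where $B':=b+(n-i)c-p$ is $\ge 0$ on $\Om_i$; when it fails we have $B'<k$, so $S_i=0$ by Lemma~\ref{lem S=01}, while the leading factor $\binom{B'}{k}$ of the product in \eqref{SMnb} is then also $0$. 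Thus, with the convention $\binom{r}{s}=0$ for $r<s$, the right side of \eqref{SMnb} equals $S_i$ everywhere on $\Om_i$, and all three integrals acquire the uniform shape $(-1)^{\text{sgn}}\,\tfrac{(mc)!}{(c!)^m}\prod_{j=1}^m N_j/D_j$, where $N_j=\binom{b+(j-1)c}{k}$ for $S_n$ and $S_{n-i}$, and $N_j=\binom{B'+(j-1)c}{k}$ for $S_i$.

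The verification of \eqref{factor} then splits into three independent matchings. The Dyson-type prefactors match because $\binom{nc}{ic}\cdot\tfrac{((n-i)c)!}{(c!)^{n-i}}\cdot\tfrac{(ic)!}{(c!)^i}=\tfrac{(nc)!}{(c!)^n}$. The signs match because the total exponent
\[
(n-i)ic+\Bigl(\tbinom{n-i}2c+(n-i)(a+ic)\Bigr)+\Bigl(\tbinom i2c+i(a+(n-i)c)\Bigr)=\tbinom n2c+na+2i(n-i)c
\]
is congruent to $\binom n2c+na$ modulo $2$, using $\binom n2=\binom{n-i}2+\binom i2+i(n-i)$. Finally, since the ratio in \eqref{factor} is $\bigl(\prod_{j=1}^{n-i}D_j\prod_{j=1}^{i}D_j\bigr)/\prod_{j=1}^{n}D_j$, it telescopes against the denominators of the three products of Theorem~\ref{thm SMnb}, and the whole identity collapses to the single statement
\[
\prod_{j=n-i+1}^{n}\binom{b+(j-1)c}{k}=\prod_{j=1}^{i}\binom{b+(n-i)c-p+(j-1)c}{k}.
\]

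I would finish by proving this last identity termwise. Reindexing the right-hand product by $j\mapsto n-i+j$ pairs its factors with those on the left, so it suffices to show $\binom{M-p}{k}\equiv\binom{M}{k}\pmod p$ for $M=b+(n-i+l-1)c$, $l=1,\dots,i$. Since $0\le k<p$ and $M\ge b+(n-i)c\ge p$ on $\Om_i$, Lucas' Theorem~\ref{thm L} gives $\binom{M}{k}\equiv\binom{M\bmod p}{k}$ and $\binom{M-p}{k}\equiv\binom{(M-p)\bmod p}{k}$, and $M\bmod p=(M-p)\bmod p$. I expect the main difficulty to be bookkeeping rather than conceptual: aligning the three index ranges of Theorem~\ref{thm SMnb} and arguing cleanly that its formula continues to deliver $S_i=0$ on the part of $\Om_i$ below the Morris range, so that the single algebraic identity above also covers the vanishing locus.
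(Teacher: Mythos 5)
Your proposal is correct and follows exactly the route the paper intends: the paper's proof is the one-line remark that the theorem ``follows from formula \eqref{SMnb} and Lucas' Theorem,'' and you have carried out precisely that plan, applying Theorem \ref{thm SMnb} to all three integrals (with the same $k=p-1-(n-1)c-a$), matching prefactors, signs, and denominators, and reducing the identity to the termwise congruence $\binom{M}{k}\equiv\binom{M-p}{k}\pmod p$ supplied by Lucas. Your treatment of the locus $b+(n-i)c-p<k$ via Lemma \ref{lem S=01}, where the Morris-range hypothesis for $S_i$ fails but both sides vanish, is a detail the paper glosses over and is handled correctly.
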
  

Notice that all binomials $\binom{\al}{\beta}$
in the second line of \eqref{factor} have $p>\al \geq \beta\geq 0$.
Notice also that $(a+ic,b) \in \Om_0(n-i,c)$ and $(a+(n-i)c,b+(n-i)c-p)\in \Om_0(i,c)$,
 and hence 
Theorem \ref{thm nd} and Lemma \ref{lem S=01} can be applied to 
$S_{n-i}(a+ic,b,c)$ and $S_{i}(a+(n-i)c, b+(n-i)c -p,c)$\,.

\begin{proof}
The theorem follows from formula \eqref{SMnb} and Lucas' Theorem.
\end{proof}

\section{A remarkable combinatorial identity}
\label{sec 6}

In this section we sketch another proof of Theorem \ref{thm nd}. We do this because at the heart of this proof there is a remarkable identity (Theorem \ref{thm:identity}) for polynomials in two variables. 

\smallskip

\noindent{\bf Notation.}  Let $c, n$ be positive integers. For $1\leq i< j\leq n$ we will consider non-negative integers $0 \leq m_{ij} \leq 2c$ and we set $\bm_{ij}=2c-m_{ij}$. For $1\leq k\leq n$ define 
\[ r_k=\sum_{1\leq i<k} \bm_{ik} + \sum_{k<i\leq n} m_{ki}, \qquad
   s_k=\sum_{1\leq i<k} m_{ik} + \sum_{k<i\leq n} \bm_{ki}.
 \]
We will use the (rising) Pochhammer symbol $(x)_m=x(x+1)(x+2)\cdots(x+m-1)$.

\begin{thm} \label{thm:identity}
Let $n\geq 2,\ c\geq 1$ be positive integers. In $\mathbb Z[x,y]$ we have the identity 
\begin{multline*}
\sum_{\mm} \left(
  (-1)^{\sum_{i<j} m_{ij}} \prod_{i<j}
\binom{2c}{m_{ij}} \cdot
\prod_{k=1}^n 
(x)_{r_k}
(y)_{s_k}
 \right)\\
 =\
\prod_{k=1}^{n-1}\frac{((k+1)c)!}{c!}(x)_{kc}(y)_{kc}
(x+y+(2n-k-2)c)_{kc}\ ,
\end{multline*}
where by $\sum_{\mm}$ we mean the $\binom{n}{2}$-fold summation
$
\sum_{m_{12}=0}^{2c} 
\sum_{m_{13}=0}^{2c} 
\sum_{m_{14}=0}^{2c} 
\ldots
\sum_{m_{n-1,n}=0}^{2c} 
$.
\end{thm}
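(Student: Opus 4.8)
The plan is to prove the identity by polynomial interpolation. Both sides lie in $\mathbb Z[x,y]$ and have degree $n(n-1)c$ in each of $x$ and $y$; for the left side this rests on the observation that $r_k+s_k=2c(n-1)$ for every $k$, whence $\sum_k r_k=\sum_k s_k=2c\binom n2=n(n-1)c$. Consequently it suffices to verify the equality at $x=a,\ y=b$ for all integers $a,b\ge 1$, since this infinite grid is Zariski dense; agreement in $\mathbb Q[x,y]$ then forces the identity in $\mathbb Z[x,y]$. The crux is to recognise the left-hand side, specialised at such integers, as a classical Selberg integral.

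First I would convert each Pochhammer factor into a beta integral. Because $r_k+s_k=(2n-2)c$ is \emph{independent of $k$}, for integers $a,b\ge 1$ the beta integral \eqref{bk} gives
\[
(a)_{r_k}(b)_{s_k}=\frac{(a+r_k-1)!\,(b+s_k-1)!}{(a-1)!\,(b-1)!}
= C\int_0^1 t_k^{\,a+r_k-1}(1-t_k)^{\,b+s_k-1}\,dt_k,\qquad
C=\frac{(a+b+(2n-2)c-1)!}{(a-1)!\,(b-1)!},
\]
where the constant $C$ is the same for every $k$. Taking the product over $k$ and interchanging the finite sum $\sum_{\mm}$ with the integral, the left-hand side evaluated at $(a,b)$ becomes
\[
C^{\,n}\int_{[0,1]^n}\prod_{k} t_k^{\,a-1}(1-t_k)^{\,b-1}
\Bigl[\,\sum_{\mm}(-1)^{\sum_{i<j}m_{ij}}\prod_{i<j}\binom{2c}{m_{ij}}\prod_k t_k^{\,r_k}(1-t_k)^{\,s_k}\Bigr]\,dt_1\cdots dt_n.
\]

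The key step is that the bracketed sum collapses to the Selberg weight. Tracking the exponents, one checks that for each pair $i<j$ the monomial $\prod_k t_k^{r_k}(1-t_k)^{s_k}$ contributes the factor $t_i^{m_{ij}}t_j^{\bm_{ij}}(1-t_i)^{\bm_{ij}}(1-t_j)^{m_{ij}}$, so the entire sum factors over the $\binom n2$ pairs; for a single pair the binomial theorem yields
\[
\sum_{m=0}^{2c}\binom{2c}{m}\bigl[-t_i(1-t_j)\bigr]^{m}\bigl[t_j(1-t_i)\bigr]^{2c-m}
=\bigl[t_j(1-t_i)-t_i(1-t_j)\bigr]^{2c}=(t_i-t_j)^{2c}.
\]
Hence the bracket equals $\prod_{i<j}(t_i-t_j)^{2c}$, and the left-hand side at $(a,b)$ equals $C^{\,n}$ times the classical Selberg integral \eqref{clS} with $\alpha=a,\ \beta=b,\ \gamma=c$.

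Finally I would evaluate that integral by \eqref{clS} (with $\Gamma(m)=(m-1)!$) and match it term by term against the right-hand side at $x=a,\ y=b$. This closing step is pure factorial bookkeeping: the $n$ copies of $(a-1)!$ and $(b-1)!$ in $C^{\,n}$ reduce the numerator factorials to $\prod_{k=1}^{n-1}(a)_{kc}(b)_{kc}$, the factor $(a+b+(2n-2)c-1)!^{\,n}$ telescopes against the Selberg denominators $\prod_{j=1}^n (a+b+(n+j-2)c-1)!$ to produce exactly $\prod_{k=1}^{n-1}(a+b+(2n-k-2)c)_{kc}$, and the prefactors $\prod_{j=1}^n\frac{(jc)!}{c!}$ shed the trivial $j=1$ term to become $\prod_{k=1}^{n-1}\frac{((k+1)c)!}{c!}$. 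I expect the main obstacle to be precisely the collapse of the per-pair sum to $(t_i-t_j)^{2c}$ — i.e.\ getting the bookkeeping of $r_k,s_k,\bm_{ij}$ aligned so the binomial theorem applies cleanly — whereas the concluding factorial manipulation, though lengthy, is routine.
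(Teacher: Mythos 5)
Your proof is correct and is essentially the paper's own argument run in the opposite direction: the paper starts from the classical Selberg formula \eqref{cSn}, binomially \emph{decouples} each factor $(x_i-x_j)^{2c}$ into the sum over the $m_{ij}$, writes the resulting one-dimensional beta integrals as ratios of factorials, and substitutes $x=a+1$, $y=-(a+b+(2n-2)c+1)$, whereas you recombine the sum back into $\prod_{i<j}(t_i-t_j)^{2c}$ and verify equality at $(x,y)=(a,b)$. Your one genuine refinement is exploiting that $r_k+s_k=(2n-2)c$ is independent of $k$, which makes the normalizing constant $C$ uniform and lets you interpolate on the positive grid instead of the paper's coupled negative value of $y$; otherwise the ingredients (binomial collapse, beta integral, Selberg evaluation, extension from infinitely many integer points) coincide with the paper's sketch.
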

The summands of the left-hand side are of degree $4c\binom{n}{2}$ polynomials, and according to the theorem, their sum is 
the right-hand side, which is the product of degree $3c\binom{n}{2}$, with linear factors. The reader is invited to verify that for $n=2$ the theorem reduces to a hypergeometric identity, namely Dixon's Theorem (\cite[Theorem 3.4.1]{AAR}) on the factorization of ${}_3F_2$ with certain parameters. For instance the $n=2, c=2$ case of
Theorem \ref{thm:identity} states that the sum of the terms
\begin{multline*}
(x+2)(x+3)(y+2)(y+3), \qquad -4x(x+2)y(y+2), \qquad 6x(x+1)y(y+1),\\
-4x(x+2)y(y+2), \qquad (x+2)(x+3)(y+2)(y+3)
\end{multline*}
is $12(x+y+2)(x+y+3)$ (here we canceled the factor $xy(x+1)(y+1)$, which
 appears in each term and on the right-hand side as well).
The explicit form of the identity for $n=3$ is
\begin{multline*}
\sum_{m_{12}, m_{23}, m_{13}=0}^{2c} (-1)^{m_{12}+m_{13}+m_{23}} 
\binom{2c}{m_{12}}\binom{2c}{m_{23}} \binom{2c}{m_{13}} 
\\
\times
\prod_{k=0}^{m_{12}+m_{13}-1} (x+k)
\prod_{k=0}^{2c-m_{12}+m_{23}-1}(x+k)
\prod_{k=0}^{4c-m_{13}-m_{23}-1}(x+k) 
\\
\times
\prod_{k=0}^{4c-m_{12}-m_{13}-1} (y+k)
\prod_{k=0}^{2c-m_{23}+m_{12}-1} (y+k)
\prod_{k=0}^{m_{13}+m_{23}-1}(y+j)
\\
\ \hskip 5.2 true cm =\ \frac{(2c)!}{c!}\,\ \frac{(3c)!}{c!}\ 
\prod_{k=1}^c (x+k-1) (y+k-1) (x+y+4c-k)
\\
\times \ \prod_{k=1}^{2c} (x+k-1) (y+k-1)(x+y+4c-k).
\end{multline*}

\medskip

\noindent {\em Sketch of the proof of Theorem \ref{thm:identity}.}  Consider equation \eqref{cSn} for a positive integer $c$, that is, the classical Selberg integral formula in $n$ dimensions. On the left-hand side we {\em decouple} the variables, i.e. we substitute 
$
(x_i-x_j)^{2c}=\sum_{m_{ij}=0}^{2c} \binom{2c}{m_{ij}} x_i^{m_{ij}}(-x_j)^{\bm_{ij}}.
$
We obtain
\begin{multline*}
\sum_{\mm} \left(
  (-1)^{\sum_{i<j} m_{ij}} \prod_{i<j}
\binom{2c}{m_{ij}} \cdot
\prod_{k=1}^n \left( \int_0^1 x_k^{a+r_k}(1-x_k)^b dx_k\right)\right) \\
=\ \prod_{j=1}^n \frac{(jc)!}{c!} \frac{(a+(j-1)c)!(b+(j-1)c)!}{(a+b+(n+j-2)c+1)!}.
\end{multline*}
Now writing $\Gamma(a+r_k+1)\Gamma(b+1)/\Gamma(a+r_k+b+2)$ for the one-dimensional Selberg integrals
 on the left-hand side, and substituting 
\[
x=a+1 ,\qquad y=-(a+2(n-1)c+b+1)\,,
\]
the obtained identity rearranges to the statement in the theorem. \qed

\medskip

We believe that the identity in Theorem \ref{thm:identity} is interesting on it own right, but here is a sketch how to use it to prove Theorem \ref{thm nd}.

\smallskip

Consider the left-hand side of \eqref{main n}, and carry out the same {\em decoupling} 
of variables as we did in the proof of Theorem \ref{thm:identity}. 
We obtain a sum, parameterized by choices of $m_{ij}$, and in each summand we get a product of one-dimensional $\F_p$-Selberg integrals of the form
$
\int_{[1]_p} x_k^{A_k}(1-x_k)^b dx_k
$
for some $A_k$. Substituting the  value  $-A_k!b!/(A_k+b+1-p)!$ for such 
a one-dimensional integral (formula \eqref{bf1}),
we obtain an explicit formula (no integrals anymore!) for the left-hand side of \eqref{main n}. 
The summation Theorem~\ref{thm:identity}  brings that sum to a product form, and one obtains exactly the 
right-hand side of \eqref{main n}. 

In this proof one has to pay additional attention to the case $a+b < p-1$, when
some integrals $\int_{[1]_p} x_k^{A_k}(1-x_k)^b dx_k$ have $A_k+b<p-1$ and  are equal to zero by formula
\eqref{bf2}.
Still in this case the sum of nonzero terms is transformed to the desired product by the identity of Theorem 
\ref{thm:identity} with parameter $c$ replaced by $d:= a+b+(n-1)c+1-p$.
\qed

\section{KZ equations}
\label{sec 7}

\subsection{Special case of $\slt$ KZ equations over $\C$}
\label{sec ckz}

Let $e,f,h$ be the standard basis of the complex Lie algeba $\slt$ with $[e,f]=h$, $[h,e]=2e$, $[h,f]=-2f$.
The element  
\bean
\label{Casimir}
\Omega =  e \otimes f + f \otimes e +
                       \frac{1}{2} h \otimes h\  \in\  \slt \ox\slt
                       \eean
 is called the Casimir element.  
For  $i\in\Z_{\geq 0}$ let  $V_i$ be the irreducible $i+1$-dimensional $\slt$-module with
basis $v_i, fv_i,\dots,f^iv_i$ such that $ev_i=0$, $hv_i=iv_i$.

Let $u(z_1,z_2)$ be a function taking values in $V_{m_1}\ox V_{m_2}$ and solving the KZ equations
\bean
\label{KZ}
\ka\,\frac{\der u}{\der z_1} = \frac{\Om}{z_1-z_2}\,u\,,
\qquad
\ka\,\frac{\der u}{\der z_2} = \frac{\Om}{z_2-z_1}\,u\,,
\eean
where $\ka \in\C^\times$ is a parameter of the equations. Let
$\Sing[m_1+m_2-2n]$ denote the space of singular vectors of
weight $m_1+m_2-2n$  in $V_{m_1}\ox V_{m_2}$,
\bea
\Sing[m_1+m_2-2n] = \{v\in V_{m_1}\ox V_{m_2}\ | \ hv = (m_1+m_2-2n)v, \ ev=0\}.
\eea
This space is one-dimensional if the integer $n$ satisfies 
$0\leq n\leq \on{min}(m_1,m_2)$ and is zero-dimensional otherwise.
According to \cite{SV1}, solutions $u$ with values in 
$\Sing[m_1+m_2-2n]$ are expressible in terms of $n$-dimensional hypergeometric integrals
\bea
u(z_1,z_2) = \sum_r u_r(z_1,z_2)\,f^{r}v_{m_1}\ox f^{n-r}v_{m_2}
\eea
with 
\bea
u_r(z_1,z_2)= (z_1-z_2)^{m_1m_2/2\ka}\int_C W_r(z_1,z_2,t) \Psi(z_1,z_2,t)  \,dt_1\dots dt_n\,.
\eea
Here the domain of integration is  the simplex $C=\{ t\in\R^n\ |\ z_1\leq t_n\leq \dots\leq t_1\leq z_2\}$.
The function $\Psi(z_1,z_2, t)$ is called the {\it master function},
\bea
\Psi(z_1,z_2, t) = \prod_{1\leq i<j\leq n} (t_i-t_j)^{2/\ka} \prod_{i=1}^n(t_i-z_1)^{-m_1/\ka}(t_i-z_2)^{-m_2/\ka}\,,
\eea
the rational functions $W_r(z_1,z_2,t)$ are called the {\it weight functions},
\bea
W_r(z_1,z_2,t) = \sum_{J\subset \{1,\dots,n\}
\atop |J|=r} \
\prod_{j\in J}\ \frac1{t_j-z_1}\ \prod_{j\notin J}\frac 1{t_i-z_2}\,.
\eea 
The fact that $u$ is a solution in $\Sing[m_1+m_2-2n]$ implies that
\bean
\label{ev=0}
\phantom{aaa}
(n-r)(m_2-n+r+1) u_r\, + \,(r+1)(m_1-r) u_{r+1} \,= \,0,\qquad r=1,\dots,n-1.
\eean
The coordinate functions $u_r$ are generalizations of the Selberg integral. In fact, $u_0$ and $u_n$
are exactly the Selberg integrals. For example,
\bea
u_0(z_1,z_2) = (z_1-z_2)^{m_1m_2/2\ka}\!\! \int_C
\prod_{1\leq i<j\leq n} \!\!\! (t_i-t_j)^{2/\ka} \prod_{i=1}^n(t_i-z_1)^{-m_1/\ka}(t_i-z_2)^{-m_2/\ka-1}dt_1\dots dt_n\,.
\eea
The change of variables $t_i = (z_2-z_1)s_i+z_1$ for $i=1,\dots, n$ gives

\bea
u_0(z_1,z_2) = \frac{(-1)^A(z_1-z_2)^B}{n!}\,\tilde S_n\Big(1-\frac{m_1}\ka, -\frac{m_2}\ka,\frac{1}\ka\Big),
\eea
\vsk.3>

\noindent
where $\tilde S_n(\al,\beta,\ga)$ denotes the Selberg integral  \eqref{clS},
  $A=\frac{n(n-1-m_1)}\ka+n$, 
  \\
$B=\frac{m_1m_2-2n(m_1+m_2) + 2n(n-1)}{2\ka}$.
By formula \eqref{ev=0}, we obtain

\bean
\label{KZ n=2}
u(z_1,z_2) &=& \ka^n\,\frac{(-1)^A(z_1-z_2)^B}{n!}\, \prod_{j=1}^n \frac{\Ga(1+\frac{j}\ka)}{\Ga(1+\frac{1}\ka)}\,
\frac{\Ga(1-\frac{m_1-j+1}\ka)\,\Ga(1-\frac{m_2-j+1}\ka)}
{\Ga(1-\frac{m_1+m_2-n-j+2}\ka)}\,,
\\
\notag
&\times &
\sum_{r=0}^n (-1)^r\binom{n}{r}
\frac{f^rv_1\ox f^{n-r}v_2}{\prod_{j=1}^r(m_1-j+1) \prod_{j=1}^{n-r}(m_2-j+1)}\,.
\eean

\subsection{Special case of $\slt$ KZ equations over $\F_p$}

 Let $p$ be an odd prime number. Let $\ka$ be a ratio of two integers not divisible by $p$.
Let $m_1, m_2 $ be positive integers such that $m_1,m_2<p$. Consider the Lie algebra $\slt$ over the field $\F_p$.
Let $V^p_{m_1}$, $V^p_{m_2}$  be the $\slt$-modules over $\F_p$,
corresponding to the complex representations
$V_{m_1}$, $V_{m_2}$.
Then the KZ differential equations \eqref{KZ} with values in
$V^p_{m_1}\ox V^p_{m_2}$ are well-defined, and 
we may discuss their polynomial solutions in variables $z_1,z_2$.
Let
\bea
\Sing[m_1+m_2-2n]_p = \{v\in V_{m_1}^p\ox V_{m_2}^p\ | \ hv = (m_1+m_2-2n)v, \ ev=0\}.
\eea
This space is one-dimensional, if the integer $n$ satisfies 
$0\leq n\leq \on{min}(m_1,m_2)$ and is zero-dimensional otherwise.

\vsk.2>

Choose the least positive integers $M_1, M_2, M_{12}, c$ such that
\bean
\label{MM}
M_i\equiv -\frac{m_i}\ka\,, \quad
M_{12}\equiv \frac{m_1m_2}{2\ka},\quad c\equiv \frac1\ka\, \qquad (\on{mod}\,p).
\eean
According to \cite{SV2}, solutions $u$ with values in 
$\Sing[m_1+m_2-2n]_p$ are expressible in terms of $n$-dimensional $\F_p$-hypergeometric integrals
\bean
\label{up}
u(z_1,z_2) = \sum_r u_r(z_1,z_2)\,f^{r}v_{m_1}\ox f^{n-r}v_{m_2}
\eean
with 
\bea
u_r(z_1,z_2)= (z_1-z_2)^{M_{12}}\int_{[1,\dots,1]_p} W_r(z_1,z_2,t) \Psi_p(z_1,z_2,t)  \,dt_1\dots dt_n\,,
\eea
where $\Psi_p(z_1,z_2, t)$ is the {\it master polynomial},
\bea
\Psi_p(z_1,z_2, t) = \prod_{1\leq i<j\leq n} (t_i-t_j)^{2c} \prod_{i=1}^n(t_i-z_1)^{M_1}(t_i-z_2)^{M_2}\,.
\eea

\begin{thm}
\label{thm KZ n=2}
Assume that  $M_1,M_2, M_{12}, c, n$ are positive integers such that
\bean
\label{abc n}
&&
M_1+(n-1)c < p,
\qquad \phantom{aaaaa}
M_2+(n-1)c < p,
 \\
 \notag
 &&
 p\leq M_1+M_2+(n-1)c,
\qquad M_1+M_2+(2n-2)c < 2p-1\ .
\eean
Then  the function $u(z_1,z_2)$, defined by \eqref{up}, is given by the formula
\bean
\label{KZ n=2 p}
u(z_1,z_2) &=& (-1)^A (z_1-z_2)^B\, \prod_{j=1}^n
 \frac{(jc)!}{c!}\,
\frac{(M_1+(j-1)c)!\, (M_2+(j-1)c)!}
{(M_1+M_2 +(n+j-2)c-p)!}\,,
\\
\notag
&\times &
\sum_{r=0}^n (-1)^r\binom{n}{r}
\frac{f^rv_1\ox f^{n-r}v_2}{\prod_{j=1}^r(M_1+(j-1)c) \prod_{j=1}^{n-r}(M_2+(j-1)c)}\,,
\eean
where
\bea
A=n(M_1+(n-1)c +1)\,,\qquad B= M_{12}+n(M_1+M_2+(n-1)c-p)\,.
\eea
\end{thm}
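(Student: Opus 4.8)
The plan is to show that the vector-valued polynomial $u$ of \eqref{up} is forced, up to one overall scalar, into the stated shape by two structural facts borrowed from \cite{SV2} — that $u$ solves the KZ equations \eqref{KZ} and takes values in the one-dimensional space $\Sing[m_1+m_2-2n]_p$ — and then to fix that scalar by evaluating a single coordinate $u_0$ through the $\F_p$-Selberg integral of Theorem \ref{thm nd}.

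First I would pin down the dependence on $z_1,z_2$. Each coordinate is $u_r(z_1,z_2)=(z_1-z_2)^{M_{12}}I_r(z_1,z_2)$, where $I_r$ is the coefficient of $t_1^{p-1}\cdots t_n^{p-1}$ in a polynomial homogeneous of joint degree $2c\binom n2+n(M_1+M_2-1)$ in $(t,z_1,z_2)$; extracting the fixed $t$-degree $n(p-1)$ shows $I_r$ is homogeneous in $(z_1,z_2)$ of degree $d:=n(M_1+M_2+(n-1)c-p)$, so $u_r$ is homogeneous of degree $B=M_{12}+d$. On the other hand, adding the two KZ equations gives $\ka(\partial_{z_1}+\partial_{z_2})u=(\tfrac{\Om}{z_1-z_2}+\tfrac{\Om}{z_2-z_1})u=0$, so $u$ depends only on $z_1-z_2$. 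A homogeneous polynomial of degree $B$ in $(z_1,z_2)$ that depends only on $z_1-z_2$ must equal $\gamma_r(z_1-z_2)^B$; hence $u_r=\gamma_r(z_1-z_2)^B$ for constants $\gamma_r\in\F_p$.

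Next I would record the recursion among the $\gamma_r$. Because $u(z_1,z_2)$ lies in $\Sing[m_1+m_2-2n]_p$, we have $e\cdot u=0$ identically; expanding with the $\slt$-relation $ef^kv_m=k(m-k+1)f^{k-1}v_m$ and reading off the coefficient of $f^rv_1\ox f^{n-r-1}v_2$ gives exactly \eqref{ev=0}, that is $(r+1)(m_1-r)\gamma_{r+1}+(n-r)(m_2-n+r+1)\gamma_r=0$. Substituting $m_i\equiv -M_i/c$ from \eqref{MM} and clearing the common factor $-c$ turns this into $(r+1)(M_1+rc)\gamma_{r+1}+(n-r)(M_2+(n-r-1)c)\gamma_r=0$. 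One then checks directly that the coefficients $\gamma_r=P\cdot(-1)^r\binom nr\big/\big(\prod_{j=1}^r(M_1+(j-1)c)\prod_{j=1}^{n-r}(M_2+(j-1)c)\big)$ appearing in \eqref{KZ n=2 p} satisfy this two-term recursion for every common scalar $P$; thus it suffices to match a single $\gamma_r$. For this I would compute $\gamma_0$: from $\gamma_0(z_1-z_2)^B=(z_1-z_2)^{M_{12}}I_0$ we get $I_0=\gamma_0(z_1-z_2)^d$, and evaluating at $z_1=0,\ z_2=1$ gives $\gamma_0(-1)^d=I_0(0,1)$. But $I_0(0,1)$ is the coefficient of $t_1^{p-1}\cdots t_n^{p-1}$ in $\prod_i t_i^{M_1}(t_i-1)^{M_2-1}\prod_{i<j}(t_i-t_j)^{2c}$, which equals $(-1)^{n(M_2-1)}S_n(M_1,M_2-1,c)$. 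The inequalities \eqref{abc n} are exactly what is needed to apply \eqref{main n} to $S_n(M_1,M_2-1,c)$, and a short computation — using that $p$ is odd, so $(-1)^{d+n(M_2-1)+n}=(-1)^A$ — shows the resulting $\gamma_0$ equals the $r=0$ term of \eqref{KZ n=2 p}. Together with the recursion this determines all $\gamma_r$ and proves the theorem.

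The main obstacle I expect is the first step: justifying that $I_0$ is a pure power $(z_1-z_2)^d$ rather than an arbitrary homogeneous polynomial of that degree. Homogeneity alone is not enough over $\F_p$, since the coefficient-extraction defining the $\F_p$-integral is not translation invariant, so this genuinely relies on $u$ being a bona fide KZ solution (via $(\partial_{z_1}+\partial_{z_2})u=0$), which is the input taken from \cite{SV2}. The remaining work is careful but routine: the translation of \eqref{ev=0} into the variables $M_1,M_2,c$, the reduction $I_0(0,1)=(-1)^{n(M_2-1)}S_n(M_1,M_2-1,c)$, and the parity bookkeeping for the sign $(-1)^A$.
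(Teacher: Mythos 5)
Your overall route is the paper's: evaluate one coordinate of $u$ by the $\F_p$-Selberg formula of Theorem \ref{thm nd} and propagate to the other coordinates by the singular-vector relation \eqref{ev=0} (the paper's proof is literally this one line, ``cf.\ Section \ref{sec ckz}''), and your later steps are correct --- the translation of \eqref{ev=0} into the variables $M_1,M_2,c$, the check that the coefficients in \eqref{KZ n=2 p} satisfy the two-term recursion, the identity $I_0(0,1)=(-1)^{n(M_2-1)}S_n(M_1,M_2-1,c)$, and the parity computation $(-1)^{d+n(M_2-1)+n}=(-1)^A$. The genuine gap is in your first step, precisely the one you flag. Over $\F_p$ the implication ``$(\der_{z_1}+\der_{z_2})u_r=0$ and $u_r$ homogeneous of degree $B$ imply $u_r=\gamma_r(z_1-z_2)^B$'' is false once $B\geq p$: setting $w=z_1-z_2$, $s=z_1+z_2$ (so that $\der_{z_1}+\der_{z_2}=2\>\der_s$), the kernel of $\der_s$ in $\F_p[w,s]$ is $\F_p[w,s^p]$, whose homogeneous degree-$B$ part is spanned by all monomials $w^{B-pk}s^{pk}$ with $0\le k\le B/p$, not by $w^B$ alone. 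The hypotheses \eqref{abc n} do not keep you in the safe range: for $p=11$, $n=2$, $c=1$, $M_1=M_2=9$ all four inequalities hold, yet $d=n(M_1+M_2+(n-1)c-p)=16>p$, so $I_r$ (degree $d$) and $u_r$ (degree $B=M_{12}+d$) are both in the dangerous regime. Nor can the gap be closed by invoking more of the KZ system than the sum of the two equations: on the one-dimensional space $\Sing[m_1+m_2-2n]_p$ the Casimir $\Om$ acts by a scalar $\om$, so KZ reduces to the Euler-type equation $\ka\,w\,\der_w u_r=\om\,u_r$; but $w\,\der_w$ multiplies every kernel element $w^{B-pk}s^{pk}$ by $B-pk\equiv B \pmod p$, so all of them satisfy the same equation, and no differential equation mod $p$ can single out $w^B$ among them.

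The step must instead be done the way Section \ref{sec ckz} suggests: by performing the affine change of variables $t_i=(z_2-z_1)s_i+z_1$ inside the $\F_p$-integral itself, which is where the inequalities \eqref{abc n} actually enter. For a polynomial $Q$ of degree at most $2p-2$ in one variable over any commutative $\F_p$-algebra, the coefficient of $s^{p-1}$ in $Q(\al s+\bt)$ equals $\al^{p-1}$ times the coefficient of $t^{p-1}$ in $Q(t)$, because by Lucas' Theorem $\binom{k}{p-1}\equiv 0$ for $p\le k\le 2p-2$; and each $t_i$-degree of $W_r\Psi_p$ is $M_1+M_2-1+(2n-2)c\le 2p-3$ by the last inequality in \eqref{abc n}. (This is the same mechanism the paper uses to prove the symmetry $S_n(a,b,c)=S_n(b,a,c)$.) Working over the field $\F_p(z_1,z_2)$ with $\al=z_2-z_1$, $\bt=z_1$, and iterating over the $n$ integration variables, one gets directly that $I_r(z_1,z_2)=(z_2-z_1)^{d}\,c_r$ for a constant $c_r\in\F_p$, and in particular $I_0=(z_2-z_1)^{d}\,(-1)^{n(M_2-1)}S_n(M_1,M_2-1,c)$. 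With this replacement for your first step, the rest of your argument goes through verbatim.
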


For $n=1$ this is  \cite[Theorem 4.3]{V7}.

\begin{proof}
The proof follows from the $\F_p$-Selberg integral formula of Theorem \ref{thm nd}
and formula \eqref{ev=0}, cf.
Section \ref{sec ckz}.
\end{proof}

\bigskip

\end{document}